\newtheorem{theorem}{Theorem}
\newtheorem{lemma}[theorem]{Lemma}
\newtheorem{corollary}[theorem]{Corollary}
\newtheorem{proposition}[theorem]{Proposition}
\newtheorem{conjecture}[theorem]{Conjecture}
\newtheorem{remark}[theorem]{Remark}
\newtheorem{question}[theorem]{Question}
\newcommand{\effdim}{\mathrm{eff.}\dim}
\title{Effective dimension of finite semigroups}
\author{Volodymyr Mazorchuk and Benjamin Steinberg}
\begin{document}

\begin{abstract}
In this paper we discuss various aspects of the problem of determining
the minimal dimension of an injective linear representation of a finite semigroup
over a field. We outline some general techniques and results, and apply
them to numerous examples.
\end{abstract}

\maketitle

\section{Introduction}
Most representation theoretic questions about a finite semigroup $S$ over a field $\Bbbk$ are really questions about the semigroup algebra $\Bbbk S$.  One question that is, however, strictly about $S$ itself is the minimum dimension of an effective linear representation of $S$ over $\Bbbk$, whereby effective we mean injective; we call this the effective dimension of $S$ over $\Bbbk$.  Note that semigroups (and in fact groups) with isomorphic semigroup algebras can have
different effective dimensions. For example, the effective dimension of $\mathbb{Z}/4\mathbb{Z}$ over $\mathbb{C}$
is $1$, whereas the effective dimension of  $\mathbb{Z}/2\mathbb{Z}\times \mathbb{Z}/2\mathbb{Z}$ is $2$
(since $\mathbb{C}$ has a unique element of multiplicative order two),
although both groups have algebras isomorphic to $\mathbb{C}^{4}$.

There are two natural questions that arise when considering the effective dimension of finite semigroups:
\begin{enumerate}[$($a$)$]
\item\label{question.a} Is the effective dimension of a finite semigroup decidable?
\item\label{question.b} Can one compute the effective dimension of one's favorite finite semigroups?
\end{enumerate}
These are two fundamentally different questions.  The first question asks for a Turing machine that on input the Cayley table of a finite semigroup, outputs the effective dimension over $\Bbbk$.  The second one asks for an actual number.  Usually for the second question one has in mind a family of finite semigroups given by some parameters, e.g., full (partial) transformation monoids, full linear monoids over finite fields, full monoids of binary relations, etc.  One wants to know the effective dimension as a function of the parameters.

The effective dimension of groups (sometimes called the minimal faithful degree) is a classical topic, dating back to the origins of representation theory.  There doesn't seem to be that much work in the literature on semigroups except for the paper~\cite{KiRo} of Kim and Roush and previous work~\cite{MS} of the authors.  This could be due in part to the fact that the question is much trickier for semigroups because semigroup algebras are rarely semisimple. Also, minimal dimension effective modules need not be submodules of the regular representation.

Question \eqref{question.a} has a positive answer if the first order theory of the field $\Bbbk$ is decidable.
Indeed, to determine the effective dimension of a finite semigroup one just needs to solve a finite collection of
systems of equations and inequations over $\Bbbk$ because the effective dimension is obviously bounded by the size of the semigroup plus one.  Classical results of Tarski imply that this is the case for algebraically closed fields
and for real closed fields.  However, the time complexity of these algorithms seems to be prohibitive to
applying them in practice.

Question \eqref{question.b} is answered for all the classical finite semigroups mentioned above, as well as several other families.

Our main tools are the classical representation theory of finite semigroups (as in~\cite[Chapter 5]{CP}, \cite{RZ} and \cite{GMS}), model theory, algebraic geometry and representation varieties, and George Bergman's lemma from~\cite{KiRo}.
We formulate various general techniques which can be used in the study of effective dimensions for certain classes
of semigroups and along the way recover and improve upon many partial results in this direction that were known, at least on the level of folklore, to representation and semigroup theorists.

The paper is organized as follows.   Section~\ref{s1} discusses elementary properties of effective modules and effective dimension.  In particular, the relevance of a classical result of R.~Steinberg~\cite{St} is discussed.  The main result in this section is an improvement on the obvious upper bound on effective dimension.  The next section explains how to interpret the effective dimension of a finite semigroup over a field $\Bbbk$ in the first order theory of $\Bbbk$ and then applies model theoretic results to deduce a number of conclusions, including decidability over algebraically closed and real closed fields.  Section~\ref{s2} gives a simple description of the effective dimension of a commutative inverse monoid over the complex field (or any sufficiently nice field) using Pontryagin duality for finite commutative inverse monoids.  These results in particular apply to finite abelian groups and to lattices, the former case of course being well known~\cite{Ka}.  The following section studies the effective dimension of generalized group mapping semigroups in the sense of Krohn and Rhodes, see~\cite{KR}.  This class includes full partial transformations monoids, symmetric inverse monoids, full binary relation monoids and full linear monoids over finite fields.  The effective dimension is computed in each of these cases.

Section~\ref{s4} discusses a lemma from~\cite{KiRo}, which is attributed to G.~Bergman.  Kim and Roush had already used this lemma (and a variant) to compute the effective dimension of the semigroups of Hall relations and reflexive relations.  The authors used it in previous work to compute the effective dimension of $0$-Hecke monoids associated to finite Coxeter groups, see~\cite{MS}.  In Section~\ref{s4}, we use it to compute the effective dimension of semigroups of transformations with a doubly transitive group of units and at least one singular transformation.   This applies in particular to full transformation monoids.  The next section studies the effective dimension of nilpotent semigroups.  Using elementary algebraic geometry and the notion of representation varieties, we show that generic $n$-dimensional representations of free nilpotent semigroups of nilpotency index $n$ are effective over an algebraically closed field.  It follows that the effective dimension of these semigroups is $n$.  The same is true for free commutative nilpotent semigroups of index $n$.  On the other hand, we construct arbitrarily large commutative nilpotent semigroups of any nilpotency index $n\geq 3$ with the property that effective dimension equals cardinality (this is the worse possible case).  This leads one to guess that the computational complexity of computing effective dimension for nilpotent semigroups should already be high.   Section~\ref{spath} computes the effective dimension, over an algebraically closed field, of various types of path semigroups, including the path semigroup of an acyclic quiver and certain truncated path semigroups.  Here, again, we use the technology of representation varieties.

The penultimate section considers some other examples that are essentially known in the literature, e.g., rectangular bands and symmetric groups, as well as some new results for hyperplane face semigroups and free left regular bands.
In the last section we present a table of effective dimensions over the complex numbers of various classical families of finite semigroups.

\medskip

\noindent
{\bf Acknowledgments.} An essential part of the paper
was written during a visit of the
second author to Uppsala University. The financial support and
hospitality of Uppsala University are gratefully acknowledged.
The first author was partially supported by the
Royal Swedish Academy of Sciences and the Swedish Research Council.
The second author was partially supported by NSERC.
We thank the referee for helpful comments.  At the time this research was performed, the second author was a member of the School of Mathematics and Statistics at Carleton University.

\section{Effective modules}\label{s1}

Let $\Bbbk$ be a field, $S$ a semigroup and $V$ a vector space over $\Bbbk$. A linear representation $\varphi\colon S\to \mathrm{End}_{\Bbbk}(V)$ is said to be \emph{effective} if it is injective.  We shall also say that the module $V$ is effective.  If, furthermore, the linear extension $\overline{\varphi}\colon \Bbbk S\to \mathrm{End}_{\Bbbk}(V)$ is injective, we say that $V$ is a \emph{faithful} module. Our choice of terminology follows \cite{GM} and is aimed at avoiding confusion between these two different notions.  Of course faithful modules are effective, but the converse is false.  For example, for $n>1$, the group $\mathbb{Z}/n\mathbb{Z}$ has an effective one-dimensional representation over $\mathbb{C}$ but no faithful one.

\subsection{Steinberg's theorem}
There is a well known result of R.~Steinberg (see \cite{St}) that says, in effect, that effective modules are not too far from faithful ones.  The result was generalized by Rieffel to the context of bialgebras~\cite{Ri}. Recall that,
given two $S$-modules $V$ and $W$, the vector space $V\otimes W$ has the natural structure of an $S$-module
given by
\begin{equation}\label{eq1}
s(v\otimes w):=sv\otimes sw.
\end{equation}
Technically speaking, $\Bbbk S$ is a bialgebra where the comultiplication $\bigtriangleup\colon \Bbbk S\to \Bbbk S\otimes \Bbbk S$ is given by $\bigtriangleup(s)=s\otimes s$ and the counit $\varepsilon\colon \Bbbk S\to \Bbbk$ given by $\varepsilon(s)=1$ for all $s\in S$. For bialgebras one can always define the tensor product of representations.

\begin{theorem}[R.~Steinberg]\label{Steinberg}
Let $V$ be an effective module for a semigroup $S$. Then
\begin{equation*}
\mathcal T(V)=\bigoplus_{n=0}^{\infty}V^{\otimes n}
\end{equation*}
is a faithful module.
\end{theorem}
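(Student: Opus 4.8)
The plan is to show that the only element of the semigroup algebra $\Bbbk S$ acting as the zero operator on $\mathcal T(V)$ is $0$ itself; this is exactly injectivity of $\overline\varphi\colon\Bbbk S\to\mathrm{End}_{\Bbbk}(\mathcal T(V))$. Write such an element as $x=\sum_{i=1}^{m}c_is_i$ with the $s_i\in S$ pairwise distinct and all $c_i\in\Bbbk$ nonzero, and suppose $\overline\varphi(x)$ annihilates $\mathcal T(V)$; the goal is to force $m=0$. Since the action is diagonal, $s\cdot(v_1\otimes\cdots\otimes v_n)=\varphi(s)v_1\otimes\cdots\otimes\varphi(s)v_n$, each homogeneous piece $V^{\otimes n}$ is an $S$-submodule, on which $s$ acts as $\varphi(s)^{\otimes n}$. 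Because the sum defining $\mathcal T(V)$ is direct, the hypothesis is equivalent to $\sum_{i=1}^{m}c_i\,\varphi(s_i)^{\otimes n}=0$ on $V^{\otimes n}$ for every $n\ge 0$; for $n=0$ this reads $\sum_i c_i=0$ (the counit action of each $s$ on $V^{\otimes 0}=\Bbbk$ is trivial).

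The key step is to convert these identities into the single family of relations $\sum_i c_i P(\varphi(s_i))=0$, valid for \emph{every} polynomial function $P$ in finitely many matrix coefficients. To get this I would fix vectors $u_1,\dots,u_n\in V$ and functionals $f_1,\dots,f_n\in V^{*}$ and apply to the $n$-th identity the linear functional $T\mapsto (f_1\otimes\cdots\otimes f_n)(T(u_1\otimes\cdots\otimes u_n))$ on $\mathrm{End}_{\Bbbk}(V^{\otimes n})$. Since $\varphi(s_i)^{\otimes n}(u_1\otimes\cdots\otimes u_n)=\varphi(s_i)u_1\otimes\cdots\otimes\varphi(s_i)u_n$, this yields $\sum_i c_i\prod_{\ell=1}^{n}f_\ell(\varphi(s_i)u_\ell)=0$. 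Writing $c_{f,u}(A)=f(Au)$ for the corresponding matrix-coefficient functional on $\mathrm{End}_{\Bbbk}(V)$, we have produced $\sum_i c_i\prod_{\ell}c_{f_\ell,u_\ell}(\varphi(s_i))=0$ for all choices of the $f_\ell,u_\ell$. Taking $\Bbbk$-linear combinations of such relations over all $n\ge 1$, together with the $n=0$ relation to cover constant terms, gives $\sum_i c_i P(\varphi(s_i))=0$ for every polynomial $P$ in finitely many of the functions $c_{f,u}$.

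To conclude I would invoke effectiveness: since $\varphi$ is injective, the operators $\varphi(s_1),\dots,\varphi(s_m)$ are pairwise distinct, so for each pair $i\neq j$ some matrix coefficient $c_{f,u}$ takes different values on them; choosing finitely many such coefficients that separate all $m$ operators pairwise, ordinary polynomial interpolation in those finitely many variables produces a polynomial $P$ with $P(\varphi(s_{i_0}))=1$ and $P(\varphi(s_i))=0$ for $i\neq i_0$. Then $0=\sum_i c_iP(\varphi(s_i))=c_{i_0}$, contradicting $c_{i_0}\neq 0$. Hence $m=0$, so $\overline\varphi$ is injective and $\mathcal T(V)$ is faithful.

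The genuinely substantive step is the middle one: passing from ``$\varphi(s_i)^{\otimes n}$ are linearly dependent for all $n$'' to ``$P(\varphi(s_i))$ are linearly dependent for all polynomials $P$,'' which rests entirely on the multiplicativity $(f_1\otimes\cdots\otimes f_n)(A^{\otimes n})=\prod_\ell f_\ell(A)$. The remaining subtleties are minor: one must use the degree-$0$ summand (it is exactly what is needed when some $\varphi(s_i)=0$), and one should phrase the final separation via polynomial interpolation rather than via a single linear functional plus a Vandermonde determinant, so that the argument does not require $\Bbbk$ to be infinite. No hypothesis on $\dim V$ is needed.
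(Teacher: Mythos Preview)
Your argument is correct. Note, however, that the paper does not actually give its own proof of this theorem: it is stated as a known result of R.~Steinberg and attributed to~\cite{St} (with the bialgebra generalization to Rieffel~\cite{Ri}), and the paper proceeds directly to the corollary. So there is no in-paper proof to compare against.

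That said, what you have written is essentially the classical argument. The reduction to $\sum_i c_i\,\varphi(s_i)^{\otimes n}=0$ for all $n$, the extraction of $\sum_i c_i\prod_\ell f_\ell(\varphi(s_i)u_\ell)=0$ via elementary tensors and dual tensors, and the polynomial-interpolation endgame are exactly the standard route. Your care about the $n=0$ summand is well placed: without it one cannot produce a polynomial with nonzero constant term, which is needed precisely when some $\varphi(s_i)=0$. Your remark that interpolation (rather than a Vandermonde trick) avoids any hypothesis on $|\Bbbk|$ is also correct, as is the observation that $\dim V$ plays no role since only finitely many matrix coefficients are ever used.
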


One can draw the following consequences when $S$ is finite (an assumption we shall make for the remainder of the paper).

\begin{corollary}\label{Steinbergcor}
Let $S$ be a finite semigroup and $V$ be an effective module.
\begin{enumerate}[$($i$)$]
\item\label{Steinbergcor.1} There exists $k\ge 0$ such that
\begin{equation*}
\mathcal T^{k}(V)=\bigoplus^{k}_{n=0}V^{\otimes n}
\end{equation*}
is a faithful module.
\item\label{Steinbergcor.2}
Every simple $\Bbbk S$-module is a composition factor of
$V^{\otimes n}$ for some $n\geq 0$.
\end{enumerate}
\end{corollary}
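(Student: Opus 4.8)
The plan is to derive both statements from Theorem~\ref{Steinberg} together with the fact that, $S$ being finite, $\Bbbk S$ is a finite-dimensional algebra. For part~\ref{Steinbergcor.1}, I would consider for each $k\ge 0$ the two-sided ideal $I_{k}=\ker\bigl(\Bbbk S\to\mathrm{End}_{\Bbbk}(\mathcal T^{k}(V))\bigr)$, i.e.\ the annihilator of the $\Bbbk S$-module $\mathcal T^{k}(V)$. Since $\mathcal T^{k}(V)=\bigoplus_{n=0}^{k}V^{\otimes n}$, an element of $\Bbbk S$ annihilates $\mathcal T^{k}(V)$ exactly when it annihilates $V^{\otimes n}$ for every $n\le k$; hence $I_{0}\supseteq I_{1}\supseteq I_{2}\supseteq\cdots$ is a descending chain of subspaces of $\Bbbk S$ whose intersection $\bigcap_{k\ge 0}I_{k}$ is precisely the annihilator of $\mathcal T(V)$, which vanishes by Theorem~\ref{Steinberg}. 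As $\Bbbk S$ is finite-dimensional the chain stabilizes, so $I_{k}=\bigcap_{j\ge 0}I_{j}=0$ for some $k$, and for that $k$ the module $\mathcal T^{k}(V)$ is faithful.

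For part~\ref{Steinbergcor.2}, fix such a $k$ and put $M=\mathcal T^{k}(V)$, a faithful $\Bbbk S$-module. Let $T$ be any simple $\Bbbk S$-module; it is a composition factor of the left regular module $\Bbbk S$, since every simple module already occurs in the semisimple quotient $\Bbbk S/\mathrm{rad}(\Bbbk S)$. Faithfulness of $M$ means $\bigcap_{m\in M}\mathrm{Ann}_{\Bbbk S}(m)=0$, so, $\Bbbk S$ being finite-dimensional, finitely many of these left ideals already intersect in $0$, say $\mathrm{Ann}_{\Bbbk S}(m_{1})\cap\cdots\cap\mathrm{Ann}_{\Bbbk S}(m_{r})=0$; then $a\mapsto(am_{1},\dots,am_{r})$ is an embedding of left $\Bbbk S$-modules $\Bbbk S\hookrightarrow M^{r}$. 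Consequently $T$ is a subquotient of $M^{r}=\bigoplus_{i=1}^{r}\bigoplus_{n=0}^{k}V^{\otimes n}$, and since $T$ is simple and this is a finite direct sum (a simple subquotient of a finite direct sum of modules is a subquotient of one of the summands), $T$ is a composition factor of $V^{\otimes n}$ for some $0\le n\le k$.

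The only step that calls for any real argument is part~\ref{Steinbergcor.1}: the essential point is the observation that faithfulness of the infinite module $\mathcal T(V)$ is already witnessed on a finite truncation, and this is exactly where the standing hypothesis that $S$ is finite enters, through the descending chain condition on the finite-dimensional algebra $\Bbbk S$. Part~\ref{Steinbergcor.2} is then a routine application of the standard fact that a faithful module over a finite-dimensional algebra has every simple module among its composition factors, so I anticipate no obstacle there.
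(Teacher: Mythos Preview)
Your proof of part~\eqref{Steinbergcor.1} is essentially identical to the paper's: both form the descending chain of annihilator ideals $I_k$ and invoke finite-dimensionality of $\Bbbk S$ to force stabilization at zero.

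For part~\eqref{Steinbergcor.2} you take a genuinely different route. The paper picks, for a given simple $L$, a primitive idempotent $e$ corresponding to the projective cover of $L$, observes that faithfulness of $\mathcal T^{k}(V)$ forces $e\mathcal T^{k}(V)\neq 0$, and hence $eV^{\otimes n}\neq 0$ for some $n\le k$, which immediately gives $L$ as a composition factor of $V^{\otimes n}$. Your argument instead exploits faithfulness to embed the regular module $\Bbbk S$ into a finite power $M^{r}$ of $M=\mathcal T^{k}(V)$, and then reads off every simple as a composition factor of some summand $V^{\otimes n}$. Both are correct. Your approach is more elementary in that it avoids projective covers and primitive idempotents entirely, relying only on Jordan--H\"older and the annihilator trick; the paper's approach is shorter once that machinery is in hand and yields the slightly sharper statement that $eV^{\otimes n}\neq 0$ for the relevant idempotent, which can be useful in applications.
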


\begin{proof}
To prove \eqref{Steinbergcor.1}, just observe that if $I_{j}\lhd \Bbbk S$, for $j=0,1,2,\dots$, is the annihilator ideal of $\mathcal{T}^{j}(V)$, then $I_{0}\supseteq I_{1}\supseteq\cdots$ and $\bigcap^{\infty}_{j=0} I_{j}=0$ by Theorem~\ref{Steinberg}.  Since $\Bbbk S$ is finite dimensional, it follows that $I_{k}=0$ for some $k\ge 0$.

Let $L$ be a simple $\Bbbk S$-module and suppose that $e$ is a primitive idempotent corresponding to the projective cover of $L$.  Let $k$ be as in \eqref{Steinbergcor.1}.  Then
\begin{equation*}
0\ne e\mathcal{T}^{k}(V)=\bigoplus^{k}_{n=0}eV^{\otimes n}.
\end{equation*}
Thus $eV^{\otimes n}\ne 0$ for some $n\ge 0$ and so $L$ is a composition factor of this tensor power.  This proves \eqref{Steinbergcor.2}.
\end{proof}

Note that if $V$ is not effective, then there exist distinct $s,t\in S$ such that $sv=tv$ for all
$v\in V$. Hence in this case formula \eqref{eq1} implies
\begin{displaymath}
s(v_1\otimes v_2\otimes\dots \otimes v_n)=t(v_1\otimes v_2\otimes\dots \otimes v_n)
\end{displaymath}
for all $v_1,v_2,\dots,v_n\in V$ and thus the module $\mathcal{T}(V)$ is neither faithful, nor effective.
Therefore, the conclusion of Theorem~\ref{Steinberg}, in fact, characterizes effectiveness of a module $V$.

\subsection{Elementary properties}

Let us define the \emph{effective dimension} $\effdim_{\Bbbk}(S)$ of a finite semigroup $S$ over $\Bbbk$ to be the minimum dimension of an effective module $V$.  By a \emph{minimal effective} module, we mean an effective module of dimension precisely $\effdim_{\Bbbk}(S)$.

If $S$ is a semigroup, then $S^{1}$ will denote the result of adjoining an external identity to $S$.
It is convenient to define
\begin{equation*}
S^{\bullet}=\begin{cases} S^{1}, & \text{if $S$ is not a monoid;}\\ S, & \text{else.}\end{cases}
\end{equation*}

If $A$ is a unital $\Bbbk$-algebra, then we say that an $A$-module $V$ is \emph{unital} if the identity of $A$ acts as the identity on $V$, i.e., the associated linear representation $\varphi\colon A\to \mathrm{End}_k(V)$ is a homomorphism of unital rings.

We record some elementary facts about effective dimension and minimal effective modules in the next proposition.

\begin{proposition}\label{elemfacts}
Let $S$ be a finite semigroup and $\Bbbk$ a field.
\begin{enumerate}[$($i$)$]
\item\label{elemfacts.1} If $\Bbbk S$ is unital (e.g., if $S$ is a monoid), then each minimal effective module is unital.
\item\label{elemfacts.2} If $S$ contains a zero element $z$, then $z$ annihilates each minimal effective module.
\item\label{elemfacts.3} $\effdim_{\Bbbk}(S)=\effdim_{\Bbbk}(S^{\bullet})\le |S^{\bullet}|$.
\item\label{elemfacts.4} If $T\le S$ is a subsemigroup, then $\effdim_{\Bbbk}(T)\le \effdim_{\Bbbk}(S)$.
\item\label{elemfacts.5} If $\mathbb{L}$ is a subfield of $\Bbbk$, then $\effdim_{\Bbbk}(S)\le \effdim_{\mathbb{L}}(S)$.
\item\label{elemfacts.6} $\effdim_{\Bbbk}(S)=\effdim_{\Bbbk}(S^{\mathrm{op}})$.
\end{enumerate}
\end{proposition}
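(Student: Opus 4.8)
The plan is to verify the six items essentially independently, as each is a short, largely formal observation; I will order them so that the earlier ones feed into the later ones. For \eqref{elemfacts.1}, if $\Bbbk S$ is unital with identity $e$ and $V$ is a minimal effective module, decompose $V=eV\oplus (1-e)V$ as $\Bbbk S$-modules (using that $e$ is a central idempotent of $\Bbbk S$, since it is the identity); the summand $(1-e)V$ is annihilated by all of $\Bbbk S$, hence by all of $S$, so if it were nonzero we could delete it and $V$ would still be effective, contradicting minimality. Thus $V=eV$ is unital. For \eqref{elemfacts.2}, if $z$ is a zero of $S$ then $zs=z=sz$ for all $s$, so $zV$ is a subspace on which every $s\in S$ acts as $z$ does; hence $zV$ together with a complement splits off a trivial-action piece, and the same deletion argument as above (now the quotient $V/zV$, or a complement, is still effective provided $|S|>1$, which we may assume) forces $zV=0$. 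One must be slightly careful to argue via a complement or quotient that remains effective; since distinct elements of $S$ already act distinctly on $V$, they still act distinctly after collapsing the common value $z$ acts by, provided $z$ is not the unique element — but $z$ acting as $0$ on all of $V$ is exactly what we want, so the cleanest route is: $zV$ is $S$-invariant and $S$ acts on it through the zero map composed with nothing, i.e. every $s$ acts as the zero operator on $zV$; replacing $V$ by the $S$-module $V/(\text{a complement chosen so that } zV \text{ survives})$ — rather, simply note $zV=z\cdot zV$ so $z$ acts as identity on... this needs the small check below.

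For \eqref{elemfacts.3}, the inequality $\effdim_{\Bbbk}(S)\le |S^{\bullet}|$ is the ``obvious bound'' already quoted in the introduction (the regular representation of $S^\bullet$, or the faithful module of dimension $|S^\bullet|$ afforded by $\Bbbk S^\bullet$ acting on itself, is effective), so the content is $\effdim_{\Bbbk}(S)=\effdim_{\Bbbk}(S^{\bullet})$. When $S$ is a monoid there is nothing to prove, so suppose $S^\bullet=S^1$ with adjoined identity $1$. Any effective $S^1$-module restricts to an effective $S$-module of the same dimension, giving $\le$. Conversely, given an effective $S$-module $V$, form $V\oplus\Bbbk$ where $\Bbbk$ carries the trivial action $s\mapsto 0$ for $s\in S$ and $1\mapsto\mathrm{id}$ on the $\Bbbk$-summand and $1\mapsto\mathrm{id}$ on $V$; then $1$ acts distinctly from every $s\in S$ because on the new summand $1$ acts as the identity and every $s\in S$ acts as $0$ — this is an effective $S^1$-module. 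Hmm, this bumps the dimension by one; to avoid that, instead let $S$ act on $V$ as given, declare $1$ to act as $\mathrm{id}_V$, and observe $1$ already differs from each $s\in S$ on $V$ unless some $s$ acts as $\mathrm{id}_V$. If some $s_0\in S$ acts as $\mathrm{id}_V$ then $s_0$ is a two-sided identity for the image, but need not be an identity of $S$; in that case one genuinely may need the extra dimension. I will settle this by the cleanest argument: $\effdim_\Bbbk(S^1)\le\effdim_\Bbbk(S)+1$ trivially, and $\effdim_\Bbbk(S)\le\effdim_\Bbbk(S^1)$ trivially, and then invoke the upper bound proof from the introduction directly on $S^1$ to get $\le|S^1|$; actually the stated equality $\effdim_\Bbbk(S)=\effdim_\Bbbk(S^1)$ when $S$ is not a monoid is the point where I expect the main (minor) obstacle, and it is resolved by: take a minimal effective $S$-module $V$; since $S$ has no identity, no element of $S$ acts as $\mathrm{id}_V$ is \emph{not} automatic, but we can pass to $V$ viewed inside $\mathcal T^2(V)$'s relevant summand, or more simply adjoin $1$ acting by identity and note that if this fails to be effective it is only because some $s$ acts by $\mathrm{id}_V$, and then one uses a standard trick of twisting $V$ so that no nonidentity element acts as the identity without increasing dimension. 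The honest summary is: \emph{this item's forward direction is the one genuinely requiring a construction}, and the construction is the one-dimensional-at-worst padding together with the observation that in the minimal case the padding is unnecessary.

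For \eqref{elemfacts.4}, restriction of an effective $S$-module to a subsemigroup $T$ is an effective $T$-module of the same dimension, so $\effdim_\Bbbk(T)\le\effdim_\Bbbk(S)$ — immediate. For \eqref{elemfacts.5}, if $V$ is an effective $S$-module over $\mathbb L$ of dimension $\effdim_{\mathbb L}(S)$, then $V\otimes_{\mathbb L}\Bbbk$ is an $S$-module over $\Bbbk$ of the same dimension, and it is effective since $S\hookrightarrow\mathrm{End}_{\mathbb L}(V)\hookrightarrow\mathrm{End}_\Bbbk(V\otimes_{\mathbb L}\Bbbk)$ (distinct operators stay distinct after base change, as their difference is a nonzero matrix over $\mathbb L$, still nonzero over $\Bbbk$). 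Hence $\effdim_\Bbbk(S)\le\effdim_{\mathbb L}(S)$. For \eqref{elemfacts.6}, if $\varphi\colon S\to\mathrm{End}_\Bbbk(V)$ is an effective representation, then $s\mapsto\varphi(s)^{\mathsf T}$ on the dual space $V^*$ is an anti-homomorphism of $S$, hence a homomorphism $S^{\mathrm{op}}\to\mathrm{End}_\Bbbk(V^*)$, and it is injective because transpose is a bijection on operators; this gives $\effdim_\Bbbk(S^{\mathrm{op}})\le\effdim_\Bbbk(S)$, and by symmetry (applying the same to $S^{\mathrm{op}}$, whose opposite is $S$) equality holds.
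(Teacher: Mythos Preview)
Your arguments for \eqref{elemfacts.1}, \eqref{elemfacts.4}, \eqref{elemfacts.5}, \eqref{elemfacts.6} are correct and match the paper's (which simply calls the last three trivial). Your treatment of \eqref{elemfacts.2} is unfinished---you circle around the right idea without landing---but the paper's one-line fix is to pass to the quotient $V/zV$: since $zs=z$ for all $s$, the element $z$ acts as the identity on $zV$, so if $s\ne t$ had $sv-tv\in zV$ for all $v$, then applying $z$ gives $sv-tv=z(sv-tv)=zsv-ztv=zv-zv=0$, contradicting effectiveness; hence $V/zV$ is effective and by minimality $zV=0$.

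The real gap is in \eqref{elemfacts.3}. You correctly reduce to showing that a minimal effective $\varphi\colon S\to\mathrm{End}_\Bbbk(V)$ extends to $S^1$ by sending $1\mapsto\mathrm{id}_V$, and you correctly identify the only obstruction: some $s_0\in S$ might already act as $\mathrm{id}_V$. But then you write that such an $s_0$ ``is a two-sided identity for the image, but need not be an identity of $S$,'' and from there you drift into padding and twisting tricks that do not close the argument. In fact that sentence is exactly where the error lies: because $\varphi$ is \emph{injective}, $\varphi(s_0 t)=\varphi(s_0)\varphi(t)=\varphi(t)$ forces $s_0 t=t$ for every $t\in S$, and symmetrically $t s_0=t$, so $s_0$ \emph{is} an identity of $S$. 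Since we are in the case where $S$ is not a monoid, no such $s_0$ exists, the identity matrix is not in the image, and the extension to $S^1$ is automatically effective with no increase in dimension. This is precisely the paper's argument, and it is the one observation your proposal is missing.
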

\begin{proof}
If $e$ is an identity element for $\Bbbk S$ and $V$ is an effective module, then so is $eV$.  This proves \eqref{elemfacts.1}.  If $z$ is a zero element and $V$ is an effective module, then $V/zV$ is an effective module yielding \eqref{elemfacts.2}.
Claims \eqref{elemfacts.4}, \eqref{elemfacts.5} and \eqref{elemfacts.6} are trivial.
The inequality in \eqref{elemfacts.3} is established
by linearizing the left regular representation. If $S=S^{\bullet}$, the equality in \eqref{elemfacts.3} is obvious.
If $S\neq S^{\bullet}$, then $\effdim_{\Bbbk}(S)\leq \effdim_{\Bbbk}(S^{\bullet})$ by \eqref{elemfacts.4}.
Conversely, if $\varphi\colon S\to \mathrm{Mat}_{n\times n}(\Bbbk)$ is an effective representation, then the identity matrix is not in the image of $\varphi$ (for otherwise $S$ would be a monoid). Hence, if we extend $\varphi$ by mapping the identity of $S^{\bullet}$ to the identity matrix, then we obtain an effective representation of $S^{\bullet}$. This implies
$\effdim_{\Bbbk}(S^{\bullet})\leq \effdim_{\Bbbk}(S)$, thereby completing the proof.
\end{proof}

In light of Proposition~\ref{elemfacts}\eqref{elemfacts.1}, we shall always assume that all modules over a unital $\Bbbk$-algebra are unital.  Observe that Proposition~\ref{elemfacts}\eqref{elemfacts.3} allows us to reduce our study to monoids, which we shall do for  much of the remainder of the paper.

\subsection{A general upper bound}\label{s6}

Every finite semigroup $S$ has an obvious effective representation, namely, the
linearization of the left regular representation on $S^{\bullet}$, i.e., the  module $\Bbbk S^{\bullet}$.  This representation has
dimension $|S|$ if $S$ is a monoid and dimension $|S|+1$, if not.

The aim of this subsection is to prove the
following general result:

\begin{theorem}\label{geneffdim}
Let $S$ be a finite semigroup and $\Bbbk$ a field. Assume that the minimal ideal $I$ of $S$ is not a group, or that if it is a group, then the characteristic of $\Bbbk$ does not divide $|I|$.  Then $\effdim_{\Bbbk}(S)\le |S^{\bullet}|-1$, that is:
\begin{displaymath}
\effdim_{\Bbbk}(S)\leq
\begin{cases}
|S|-1, & \text{if $S$ is a monoid};\\
|S|, & \text{else}.
\end{cases}
\end{displaymath}
\end{theorem}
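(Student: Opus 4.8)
The plan is to improve on the obvious bound coming from the regular representation $\Bbbk S^{\bullet}$, which has dimension $|S^{\bullet}|$, by exhibiting a single nonzero vector that can be killed off. By Proposition~\ref{elemfacts}\eqref{elemfacts.3} we may and do assume $S=S^{\bullet}$, i.e.\ $S$ is a monoid with identity $1$; we must then produce an effective module of dimension $|S|-1$. The left regular module $\Bbbk S$ is effective because $s\mapsto s\cdot 1$ is already injective on $S$. The idea is to find a proper $S$-submodule $W\subseteq\Bbbk S$ of dimension $1$ such that the quotient $\Bbbk S/W$ is still effective, or, dually, to find a codimension-one $S$-submodule of $\Bbbk S$ on which the action of $S$ remains faithful as a set map; either way we land in dimension $|S|-1$.

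First I would look at the minimal ideal (kernel) $I$ of the finite semigroup $S$. Recall $I$ is a completely simple semigroup, isomorphic to a Rees matrix semigroup $\mathcal M(G;A,B;P)$ over its maximal subgroup $G$. The hypothesis is precisely that $\operatorname{char}\Bbbk\nmid |G|$ in the degenerate case $|A|=|B|=1$ where $I=G$, and otherwise there is no condition. The point of this hypothesis is that it guarantees $\Bbbk I$ is not a local algebra: either $I$ is a nontrivial completely simple semigroup and $\Bbbk I$ has a nontrivial decomposition, or $I=G$ is a group whose algebra $\Bbbk G$ is semisimple (Maschke) of dimension $\ge 2$ when $|G|\ge2$, or $|G|=1$ and $I=\{z\}$ is a zero, handled by Proposition~\ref{elemfacts}\eqref{elemfacts.2} directly. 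In all the non-trivial cases I would produce, inside the left ideal $\Bbbk S\cdot e$ attached to an idempotent $e\in I$ (or inside $\Bbbk I$ itself), a one-dimensional $S$-submodule $W$ — concretely spanned by something like $ge-e$ for a suitable $g$ in the $\mathcal H$-class of $e$, or a suitable idempotent difference — and then pass to $\Bbbk S/W$.

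The key verification, and the main obstacle, is to check that the quotient $\Bbbk S/W$ remains \emph{effective}, i.e.\ that no two distinct $s,t\in S$ act identically on $\Bbbk S/W$. Equivalently, I need: if $su-tu\in W$ for every $u\in\Bbbk S$, then $s=t$. Since $W$ is one-dimensional, the failure locus is very constrained: $su-tu$ is forced to lie in a fixed line for all $u$, in particular for $u=1$ we get $s-t\in W$, so $s-t$ is a scalar multiple of the chosen spanning vector of $W$; because distinct semigroup elements are linearly independent in $\Bbbk S$, the spanning vector of $W$ must itself be (a scalar times) $s-t$, which pins down $s,t$ to essentially one candidate pair, and then one rules this pair out by evaluating on a further carefully chosen $u$ — using the structure of the Rees coordinates, or in the group case using an element $g\in G$ with $gs\ne gt$ being separated by $W$ in a way $s,t$ were not. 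I would organize this as a short case analysis on the structure of $I$: (1) $I=\{z\}$ a zero (immediate from Proposition~\ref{elemfacts}\eqref{elemfacts.2}); (2) $I=G$ a nontrivial group with $\operatorname{char}\Bbbk\nmid|G|$, where I exploit that $\Bbbk G$ is semisimple so the trivial submodule splits off and the complement still separates $G$-translates; (3) $I$ a completely simple but non-group semigroup, where the two or more $\mathcal L$- or $\mathcal R$-classes give room to drop a dimension. The delicate bookkeeping is confirming effectiveness is not destroyed, and that is where I expect to spend the real effort.
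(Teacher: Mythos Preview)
Your treatment of the group case (your case~(2)) is correct and matches the paper: when $I=G$ with $\operatorname{char}\Bbbk\nmid|G|$, one splits off the trivial summand $\Bbbk S^{\bullet}\eta$ with $\eta=|G|^{-1}\sum_{h\in G}h$, and the complement is effective of dimension $|S^\bullet|-1$.

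In the non-group case, however, your plan has a genuine gap. You commit to quotienting by a \emph{one}-dimensional submodule $W$ and suggest spanning it by ``something like $ge-e$'' or ``a suitable idempotent difference''. These candidates need not work. Take $S=\mathcal{T}_2$ (maps acting on the left), whose minimal ideal $I=\{c_1,c_2\}$ consists of the constant maps. Here $\mathcal H_e$ is trivial, so $ge-e=0$; the only idempotent difference available is $c_1-c_2$, and $\Bbbk(c_1-c_2)$ \emph{is} a submodule, but the quotient is not effective: $c_1$ and $c_2$ are left zeros, so $c_1u-c_2u=c_1-c_2\in W$ for every $u$, and they act identically on $\Bbbk S/W$. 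Passing to $S^{\mathrm{op}}$ does not save this particular choice either. So the ``delicate bookkeeping'' you flag is not just bookkeeping: your recipe for $W$ can fail outright, and you have not supplied an alternative.

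The paper sidesteps this entirely by abandoning the requirement that $W$ be one-dimensional. After passing to $S^{\mathrm{op}}$ if necessary to ensure $I$ has at least two $\mathcal L$-classes, it takes $W=\Bbbk Se$ for an idempotent $e\in I$ and considers $\Bbbk S^{\bullet}/\Bbbk Se$, which has dimension at most $|S^{\bullet}|-1$. Effectiveness is then a two-line check: acting on the coset of $1$ already separates everything outside $\mathcal L_e$ (and separates $\mathcal L_e$ from its complement), while for distinct $s,t\in\mathcal L_e$ one picks $x\in I\setminus\mathcal L_e$ and uses that $sx\neq tx$ in the completely simple semigroup $I$, with $sx,tx\notin\mathcal L_e$. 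This is both simpler and more robust than hunting for a good line.
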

\begin{proof}
Let $I$ denote the minimal ideal of $S$ and let $e\in I$ be an idempotent.
Denote by $\mathcal{R}_e$, $\mathcal{L}_e$ and  $\mathcal{H}_e$ the corresponding
Green's classes containing $e$.

If $\mathcal{H}_e=I$ and the characteristic of $\Bbbk$ does not divide $|\mathcal{H}_{e}|$, then
\begin{equation*}
\eta:=\frac{1}{|\mathcal{H}_e|}\sum_{h\in \mathcal{H}_e}h
\end{equation*}
is a primitive idempotent of $\Bbbk S^{\bullet}$ and $\Bbbk S^{\bullet}\eta$ is isomorphic to the trivial $S$-module. This implies
that the trivial $S$-module is a direct summand of the effective representation $\Bbbk S^{\bullet}$.
Hence $\Bbbk S^{\bullet}/\Bbbk S^{\bullet}\eta$ is also effective and has the correct dimension.

If $I\ne \mathcal H_{e}$, then replacing $S$ by $S^{op}$ and applying
Proposition~\ref{elemfacts}\eqref{elemfacts.6} if necessary, we may assume $I\ne \mathcal{L}_{e}$.
This means that $I$ contains at least two different $\mathcal{L}$-classes.

Clearly the representation $\varphi$ associated to $\Bbbk S^{\bullet}/\Bbbk Se$
 separates elements in $S\setminus \mathcal{L}_e$
and also separates all such elements from elements of $\mathcal{L}_e$ (consider the action on the coset of the multiplicative identity).  Let $x\in I\setminus \mathcal{L}_{e}$.  If
$s,t\in \mathcal{L}_e$ are distinct, then $sx\ne tx$ since $I$ is a completely simple semigroup.  As $sx,tx\notin \mathcal{L}_{e}$, it follows that $\varphi(sx)\ne \varphi(tx)$, whence $\varphi(s)\ne \varphi(t)$. Thus $\varphi$ is effective of dimension strictly less than $|S^{\bullet}|$.  This completes the proof.
\end{proof}

The upper bound given by Theorem~\ref{geneffdim} is sharp.  For instance, none of the two-element semigroups without identity have an effective one-dimensional module.  On the other hand, a cyclic group of order $2$ has no effective one-dimensional module over a field of characteristic $2$.

\section{Decidability and other applications of model theory}\label{s8}

In this section we use \cite{Ma} as a general reference for model theory.
If $R$ is any ring, then the ring $\mathrm{Mat}_{n\times n}(R)$ of $n\times n$ matrices over $R$ is first order interpretable in $R$ (in the language of rings).  Elements of $\mathrm{Mat}_{n\times n}(R)$ can be identified with $n^{2}$-tuples of elements of $R$ by considering  matrix entries.  Equality of matrices is defined in terms of equality of their entries.  Matrix addition and multiplication can be expressed entrywise in terms of the ring operations.  Thus any first order statement about $\mathrm{Mat}_{n\times n}(R)$ is a first order statement about $R$.

Now let $S$ be a finite semigroup. The statement that $S$ has an effective representation of dimension $n$ over the ring $R$ is clearly a first order statement about $\mathrm{Mat}_{n\times n}(R)$.  One is asking for the existence of $|S|$ distinct elements of $\mathrm{Mat}_{n\times n}(R)$ that multiply according to the Cayley table of $S$.  It follows that if the first order theory of $R$ is decidable, then one can decide if $S$ has an effective representation of dimension $n$ (and the algorithm is uniform in $n$ and $S$, where $S$ is given by its Cayley table).

In light of the above remarks, classical model theory implies the following
collection of statements:

\begin{theorem}\label{modelthm}
Let $S$ be a finite semigroup.
\begin{enumerate}[$($a$)$]
\item\label{modelthm.1} If $\Bbbk$ and $\Bbbk'$ are two algebraically closed fields of the same
characteristic, then $\effdim_{\Bbbk}(S)=\effdim_{\Bbbk'}(S)$.
\item\label{modelthm.2} If $\effdim_{\mathbb{C}}(S)=n$, then $S$ has an effective
representation of dimension $n$ over some finite field.
\item\label{modelthm.3} If $S$ has (Krohn-Rhodes) complexity $n\ge 0$, then
$\effdim_{\mathbb{C}}(S)\ge n$.
\item\label{modelthm.4} $\effdim_{\Bbbk}(S)$ is effectively computable for any algebraically closed field $\Bbbk$.
\item\label{modelthm.5} $\effdim_{\Bbbk}(S)$ is effectively computable for any real closed field $\Bbbk$, e.g., the real numbers.
\end{enumerate}
\end{theorem}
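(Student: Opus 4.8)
The plan is to prove all five parts uniformly from the single observation recorded just before the theorem: for each fixed $n$, the assertion ``$S$ has an effective representation of dimension $n$ over $R$'' is a first order sentence $\sigma_{S,n}$ in the language of rings (it asserts the existence of $|S|$ pairwise distinct matrices in $\mathrm{Mat}_{n\times n}(R)$ satisfying the finitely many equations encoded by the Cayley table of $S$, and each matrix condition unwinds entrywise into ring equations). Consequently $\effdim_{\Bbbk}(S)=n$ is expressible as $\sigma_{S,n}\wedge\neg\sigma_{S,n-1}$, and $\effdim_{\Bbbk}(S)\le n$ as $\sigma_{S,n}$; by Proposition~\ref{elemfacts}\eqref{elemfacts.3} the effective dimension is at most $|S^{\bullet}|$, so only finitely many values of $n$ are in play. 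I would state this reduction once at the top of the proof and then dispatch each item.

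For \eqref{modelthm.1}, I would invoke the classical theorem of model theory that the theory $\mathrm{ACF}_p$ of algebraically closed fields of a fixed characteristic $p$ is complete (a consequence of the {\L}o\'s--Vaught test, since $\mathrm{ACF}_p$ is $\kappa$-categorical for every uncountable $\kappa$, or alternatively via quantifier elimination for ACF). Hence $\Bbbk$ and $\Bbbk'$ satisfy exactly the same sentences $\sigma_{S,n}$, and so the least $n$ with $\Bbbk\models\sigma_{S,n}$ equals the least $n$ with $\Bbbk'\models\sigma_{S,n}$. For \eqref{modelthm.4} and \eqref{modelthm.5}, I would cite that $\mathrm{ACF}$ (Tarski, via quantifier elimination) and $\mathrm{RCF}$ (Tarski--Seidenberg) are decidable; since there are only finitely many candidate values of $n$, one decides $\sigma_{S,n}$ for each and reads off the minimum, and the procedure is uniform in the Cayley table of $S$. (For \eqref{modelthm.5} one should note the characteristic-$0$ hypothesis of Theorem~\ref{geneffdim} is automatic over a real closed field, and that the algorithm is the same for any real closed field by completeness of $\mathrm{RCF}$.)

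For \eqref{modelthm.2}, suppose $\effdim_{\mathbb{C}}(S)=n$, so $\mathbb{C}\models\sigma_{S,n}$. Since $\mathbb{C}\models\mathrm{ACF}_0$ and $\mathrm{ACF}_0$ is axiomatized by the field axioms together with the ``every nonconstant polynomial has a root'' schema and the sentences $1+1+\cdots+1\ne 0$ (one for each prime $p$), an easy compactness argument shows that $\sigma_{S,n}$, being a single sentence, already follows from $\mathrm{ACF}_p$ for all sufficiently large primes $p$; fixing such a $p$, the algebraic closure $\overline{\mathbb{F}}_p$ satisfies $\sigma_{S,n}$, and then $\sigma_{S,n}$ holds in some finite subfield of $\overline{\mathbb{F}}_p$ because the finitely many matrix entries witnessing it generate a finite subfield. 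Finally, \eqref{modelthm.3} is the one item that is not pure model theory: here I would use the deep theorem of Krohn--Rhodes theory that a finite semigroup of complexity $n$ has a subsemigroup (indeed a quotient of a subsemigroup) built from an alternating wreath product chain of $n$ nontrivial finite simple groups and aperiodic semigroups, together with the fact that a faithful linear representation of a wreath product $G\wr T$ requires dimension growing with the depth of the chain; combined with Steinberg's theorem (Theorem~\ref{Steinberg}) — which forces an effective module to carry, in its tensor algebra, a faithful $\mathbb{C}S$-module and hence faithful modules for all these group sections — one extracts the bound $\effdim_{\mathbb{C}}(S)\ge n$. I expect \eqref{modelthm.3} to be the main obstacle: parts \eqref{modelthm.1}, \eqref{modelthm.2}, \eqref{modelthm.4}, \eqref{modelthm.5} are formal consequences of completeness/decidability of $\mathrm{ACF}$ and $\mathrm{RCF}$ plus compactness, whereas \eqref{modelthm.3} genuinely requires the structure theory underlying Krohn--Rhodes complexity and a careful lower bound argument relating complexity to faithful representation dimension.
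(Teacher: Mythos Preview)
Your treatment of parts \eqref{modelthm.1}, \eqref{modelthm.2}, \eqref{modelthm.4}, and \eqref{modelthm.5} is correct and essentially identical to the paper's: completeness of $\mathrm{ACF}_p$, the Lefschetz-type transfer from characteristic $0$ to large positive characteristic via compactness, the finitely-generated-subfield argument to descend to a finite field, and decidability of $\mathrm{ACF}$ and $\mathrm{RCF}$. (The aside about Theorem~\ref{geneffdim} in your discussion of \eqref{modelthm.5} is unnecessary; the bound from Proposition~\ref{elemfacts}\eqref{elemfacts.3} already suffices.)

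The gap is in \eqref{modelthm.3}. You correctly flag it as the non-formal part, but your proposed route---extracting wreath-product chains from the complexity-$n$ hypothesis and then arguing directly that faithful linear representations of such chains need dimension growing with the depth, mediated by Theorem~\ref{Steinberg}---is both vague and much harder than necessary. The assertion that ``a faithful linear representation of $G\wr T$ requires dimension growing with the depth of the chain'' is not a standard lemma and would itself need a proof; the tensor-algebra detour through Theorem~\ref{Steinberg} does not obviously produce the needed lower bound either.

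The paper instead bootstraps \eqref{modelthm.3} from \eqref{modelthm.2}, which you have already proved. If $\effdim_{\mathbb{C}}(S)=m$, then by \eqref{modelthm.2} $S$ embeds in $\mathrm{Mat}_{m\times m}(\mathbb{F}_q)$ for some finite field $\mathbb{F}_q$. Since Krohn--Rhodes complexity is monotone under taking subsemigroups, the complexity of $S$ is at most that of $\mathrm{Mat}_{m\times m}(\mathbb{F}_q)$, which is known to equal $m$ (or $m-1$ when $q=2$); see \cite[Theorem~4.12.31]{RS}. Either way the complexity of $S$ is at most $m$, i.e., $\effdim_{\mathbb{C}}(S)\ge n$. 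So the ``main obstacle'' dissolves into a one-line citation once you notice that \eqref{modelthm.2} places $S$ inside a finite matrix monoid whose complexity is already tabulated.
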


\begin{proof}
Claim \eqref{modelthm.1} follows from the fact that the theory of an algebraically closed field of a given characteristic is complete and hence all models  are elementary equivalent, see the discussion following~\cite[Theorem~2.2.6]{Ma}.
Claim \eqref{modelthm.2} follows from the fact that a first order statement is true for
algebraically closed fields of characteristic zero if and only if  it is true for all
algebraically closed fields of sufficiently  large characteristic  (see \cite[Corollary~2.2.10]{Ma}) and the
observation that in positive characteristic a finite semigroup of matrices over the algebraic closure of the prime field is already defined over a finite field.
Claim \eqref{modelthm.3} follows from \eqref{modelthm.2} and the result that
the complexity of $\mathrm{Mat}_{n\times n}(\mathbb{F}_{q})$, for $n\ge 0$, is $n$ unless $q=2$, in which case the complexity is $n-1$,  see \cite[Theorem~4.12.31]{RS}.
Claim \eqref{modelthm.4} follows from the fact the first order theory of any algebraically closed field is decidable,
see \cite[Corollary~2.2.9]{Ma}. Hence the existence of an effective representation of $S$ of dimension
$n$ is decidable and we have only finitely many dimensions to check, namely $0,1,2,\dots,|S|$ (as the regular
representation of dimension $|S|+1$ is for sure effective).  Finally, claim \eqref{modelthm.5} follows as in the algebraically closed case because the first order theory of a real closed field is decidable, see~\cite[Corollary~3.3.16]{Ma}.
\end{proof}

From model theory it follows that the effective dimension of a finite semigroup $S$ over an algebraically closed field of characteristic zero agrees with its effective dimension over all algebraically closed fields of sufficiently large characteristic.  It is natural then to ask the following question.

\begin{question}
Is it true that if $S$ is a finite semigroup and $\Bbbk,\Bbbk'$ are algebraically closed fields whose characteristics do not divide the order of any maximal subgroup of $S$, then $\effdim_{\Bbbk}(S)=\effdim_{\Bbbk'}(S)$?
\end{question}

Another issue is the complexity of computing the effective dimension.  The algorithm for deciding the theory of algebraically closed fields is via quantifier elimination.  The sentence stating that $S$ has an effective representation of dimension $n$ belongs to the existential theory.  Our understanding from perusing the computer science literature is that for an algebraically closed field of characteristic zero, the existential theory is known to be NP-hard and to be in PSPACE. Since almost all semigroups are $3$-nilpotent (see \cite{KRS}), we conjecture the following:

\begin{conjecture}\label{conjn}
{\rm Computing the effective dimension over $\mathbb{C}$ of a $3$-nil\-po\-tent semigroup is NP-hard.}
\end{conjecture}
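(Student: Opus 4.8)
The natural route is a polynomial-time reduction from a known NP-complete problem to the decision problem ``$\effdim_{\mathbb{C}}(S)\le k$'' for $3$-nilpotent $S$; since $\effdim_{\mathbb{C}}(S)\le |S|$ by Theorem~\ref{geneffdim}, an oracle computing the function answers this decision problem, so NP-hardness of the decision problem yields NP-hardness of the computation. The first step is to make the target concrete. If $S$ is $3$-nilpotent with zero $z$, Proposition~\ref{elemfacts}\eqref{elemfacts.2} shows $z$ acts as $0$ on any minimal effective module, so such a module is exactly a representation $\psi$ of the contracted semigroup algebra $A$ of $S$ (a nilpotent algebra with $A^{3}=0$) on a space $V$ for which the matrices $\psi(s)$, $s\in S\setminus\{z\}$, are pairwise distinct and nonzero. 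Writing $Q=S\setminus S^{2}$, $P=S^{2}\setminus\{z\}$, and recording the multiplication as a map $f\colon Q\times Q\to P\cup\{z\}$ (all other products being $z$), this says: choose $M_{a}\in\mathrm{Mat}_{n\times n}(\mathbb{C})$ for $a\in Q$ so that $M_{a}M_{b}$ depends only on $f(a,b)$, all triple products $M_{a}M_{b}M_{c}$ vanish, and a finite list of separating inequations holds; $\effdim_{\mathbb{C}}(S)$ is the least feasible $n$. Two structural remarks guide the construction: the subalgebra generated by the $M_{a}$ is nilpotent (cube zero), hence simultaneously strictly upper-triangularizable, so one may work relative to the radical filtration $V\supseteq AV\supseteq A^{2}V\supseteq A^{3}V=0$ with $A^{2}V$ annihilated by $A$; and because $P$ is typically nonempty, certain products $M_{a}M_{b}$ are forced nonzero while all triple products vanish, which is the source of nontrivial lower bounds (a $2$-nilpotent semigroup, by contrast, has $\effdim_{\mathbb{C}}\le 2$). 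The problem is plainly in $\mathrm{PSPACE}$ via the existential theory of $\mathbb{C}$, matching the folklore remark above, so only the NP-hardness is at issue.

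For the reduction one should choose a source problem that ``reads'' as linear algebra over $\mathbb{C}$: plausible candidates are the chromatic number or the clique-cover number of a graph, or, closest in spirit to the reformulation above, the minimum rank of a matrix with a prescribed zero/nonzero pattern (a completion problem of MINRANK type). Indeed, for the ``bipartite'' $3$-nilpotent semigroups in which $f(a,b)=z$ unless $a,b$ lie in two complementary blocks $Q_{1},Q_{2}$ of the generators, the reformulation essentially asks for a rank factorization of the pattern matrix of $f$ through small spaces, and $\effdim_{\mathbb{C}}$ of such a semigroup should equal the minimum rank of a completion of that pattern up to lower-order terms. From an instance $x$ of the chosen problem one then builds, in polynomial time, a $3$-nilpotent semigroup $S_{x}$ (i.e. a map $f_{x}$) and a threshold $k_{x}$ so that the gadgets of $x$ become generators and prescribed products of $S_{x}$ and the optimal cost of $x$ is mirrored by $\effdim_{\mathbb{C}}(S_{x})$.

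The easy half of the correctness proof is to turn a combinatorial solution of cost $k$ into an explicit effective module of dimension $\le k_{x}$: one assembles the $M_{a}$ block by block along a grading $V=V_{0}\oplus V_{1}\oplus V_{2}$ with $M_{a}\colon V_{i}\to V_{i+1}$ (which forces triple products to vanish automatically), placing generic nonzero entries exactly where the combinatorics prescribes, and checks that $M_{a}M_{b}$ depends only on $f_{x}(a,b)$ and that the finitely many separating inequations hold for a generic choice of the free parameters. This direction is essentially a direct construction.

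The hard half is the converse, and it is the main obstacle: one must show that \emph{every} effective $S_{x}$-module of dimension $\le k_{x}$ is forced to arise from a combinatorial solution of $x$, i.e. that small complex representations of the carefully chosen algebra $A_{x}$ are rigid. The flexibility of complex matrices is the enemy here — one has to preclude representations that mix scalars or overlap images in unanticipated ways, and in particular show that the naive flag picture above is essentially optimal. The tools I would use are the simultaneous strict triangularization of $\{M_{a}\}$ together with the radical filtration $V\supseteq AV\supseteq A^{2}V\supseteq A^{3}V=0$: analyzing the induced maps $V/AV\to AV/A^{2}V$ and $AV/A^{2}V\to A^{2}V$ and feeding in the separation and non-vanishing conditions should force, after a base change, a monomial or block-triangular normal form for the $M_{a}$ from which the solution of $x$ can be read off, and should pin $\effdim_{\mathbb{C}}(S_{x})$ to the combinatorial optimum. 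Making this work requires engineering $S_{x}$ so that its multiplication tensor is sufficiently rigid — enough pairwise-incompatible prescribed products and few linear relations among them — that no small representation can take shortcuts. Granting this rigidity, a minimum-cost solution of the source problem is recovered from a minimal effective module, so the decision problem, hence the computation of $\effdim_{\mathbb{C}}$ on $3$-nilpotent semigroups, is NP-hard; combined with the $\mathrm{PSPACE}$ upper bound this would sandwich its complexity between NP-hard and $\mathrm{PSPACE}$.
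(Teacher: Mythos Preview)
The statement you are addressing is a \emph{conjecture} in the paper, not a theorem; the paper offers no proof and merely motivates it by the fact that almost all semigroups are $3$-nilpotent and that the existential theory of $\mathbb{C}$ is NP-hard. There is therefore no ``paper's own proof'' to compare your proposal against.

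As a strategy, your outline is reasonable and identifies the right landscape: reduce from a combinatorial problem (chromatic number, clique cover, or a MINRANK-type completion problem) by encoding instances as $3$-nilpotent semigroups, use the radical filtration to normalize representations, and read off a solution from a small effective module. However, the proposal is not a proof. The entire weight lies in what you call the ``hard half'', and there you explicitly write ``Granting this rigidity\ldots''. That is precisely the step that is missing: you have not exhibited a specific source problem, a specific gadget construction $x\mapsto (S_x,k_x)$, nor an argument that every effective $S_x$-module of dimension at most $k_x$ forces a combinatorial solution. The flexibility you flag --- complex representations that mix scalars, overlap images, or deviate from the naive grading --- is exactly why this conjecture is open; asserting that ``engineering $S_x$'' to be ``sufficiently rigid'' should work does not discharge it. In particular, MINRANK-type problems over $\mathbb{C}$ are themselves delicate (their complexity depends heavily on the field and on the exact pattern constraints), so invoking them as a black box does not close the gap.

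In short: your proposal is a plausible research plan, but it leaves the decisive rigidity lemma unproved, and that lemma is the whole difficulty. The paper itself does not claim to resolve it.
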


The first order theory of the field of rational numbers is undecidable.  This is a consequence of the solution of Hilbert's $10^{th}$ problem and a result of Julia Robinson that the integers are first order definable in the field of rational numbers.  To the best of our knowledge the existential theory of $\mathbb{Q}$ is open.  We do not know if effective dimension is decidable over $\mathbb{Q}$.  Of course effective dimension is computable over any given finite field.

In any event, it is not feasible in practice to compute the effective dimension of your favorite semigroup using the decidability of the first order theory.  Also an algorithm doesn't help for computing the effective dimension of an infinite family of semigroups, such as full transformation monoids or matrix monoids.  The rest of this paper focuses on more practical techniques for specific examples.

\section{Effective dimension of commutative inverse monoids}\label{s2}

In this section, we compute the effective dimension of a finite commutative inverse monoid  $M$ over sufficiently nice fields.   We shall see later that the situation is much more complicated for commutative semigroups in general.

Let $M$ be a finite monoid.  We shall say that a field $\Bbbk$ is a \emph{good splitting field} for $M$ if the following two conditions hold:
\begin{enumerate}[$($a$)$]
 \item The characteristic of $\Bbbk$ does not divide the order of any maximal subgroup of $M$.
 \item The field $\Bbbk$ is a splitting field for every maximal subgroup of $M$.
\end{enumerate}

If $\Bbbk$ is a good splitting field for a commutative inverse monoid $M$, then  $\Bbbk M\cong \Bbbk^{M}$ and hence $\Bbbk M$ is a semisimple algebra with one-dimensional simple modules, see~\cite[Chapter~5]{CP} or~\cite{Ste} for details.  This leads us to the consideration  of dual monoids in this context.

Fix a field $\Bbbk$.
Let $M$ be a finite monoid  and set
\begin{displaymath}
 \widehat{M}=\hom(M,\Bbbk)
\end{displaymath}
where we view $\Bbbk$ as a multiplicative monoid.  Then $\widehat{M}$ is a commutative inverse monoid under pointwise product called the \emph{dual monoid}  of $M$. It is not hard to verify that the natural homomorphism $\eta\colon M\to \widehat{\widehat{M}}$ is the universal map from $M$ to a commutative inverse monoid in the case that $\Bbbk$ is a good splitting field for $M$, and hence is an isomorphism if and only if $M$ is a commutative inverse monoid.  This Pontryagin duality for finite inverse monoids follows immediately from the classical representation theory of finite monoids; see~\cite[Chapter~5]{CP} or~\cite{RZ}.

The dual monoid of a finite commutative inverse monoid $M$ is easy to describe using the aforementioned representation theoretic results.  A classical result of Clifford says that commutative inverse semigroups are semilattices of abelian groups, see \cite[II.2]{Pe}.  More precisely, the $\mathcal H$-relation on a commutative inverse semigroup is a congruence and the quotient by this congruence is isomorphic to the semilattice of idempotents.  In the case of our finite $M$, the idempotents set $E(M)$ is a lattice.  It is straightforward to check that the $\mathcal H$-classes of $\widehat{M}$ are the dual groups of the $\mathcal H$-classes of $M$ and that the lattice $E(\widehat{M})$ is the opposite lattice of $E(M)$.

\begin{proposition}\label{seppoints}
Let $M$ be a finite commutative inverse monoid and $X\subseteq \widehat{M}$. Suppose that $\Bbbk$ is a good splitting field for $M$.  Then $X$ separates points of $M$ if and only if $X$ generates $\widehat{M}$ as a monoid.
\end{proposition}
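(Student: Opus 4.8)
The plan is to reduce the assertion to Pontryagin duality for finite commutative inverse monoids together with a cardinality count. Write $N=\langle X\rangle$ for the submonoid of $\widehat M$ generated by $X$; note that ``$X$ generates $\widehat M$ as a monoid'' means exactly $N=\widehat M$. I would first dispose of a purely formal reduction: the condition ``$\rho(m)=\rho(m')$ for every $\rho$ in a subset $A\subseteq\widehat M$'' is preserved under forming products of the $\rho$'s and holds vacuously for the identity character, so $A$ separates the points of $M$ if and only if $\langle A\rangle$ does. Taking $A=X$, this shows that $X$ separates the points of $M$ iff $N$ does, so it suffices to prove the equivalence
\[
N\text{ separates the points of }M\quad\Longleftrightarrow\quad N=\widehat M.
\]

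Next I would observe that $N$ is again a finite commutative inverse monoid for which $\Bbbk$ is a good splitting field. Since by Clifford's theorem $\widehat M$ is a semilattice of abelian groups, every $\rho\in\widehat M$ lies in a finite subgroup, so $\rho^{-1}$ is a power of $\rho$; hence $N$, being a submonoid, is closed under inversion and is therefore an inverse submonoid of $\widehat M$. For the splitting-field property one uses the description recalled before the proposition: the maximal subgroups of $\widehat M$ are the dual groups $\widehat H=\hom(H,\Bbbk)$ of the (abelian) maximal subgroups $H$ of $M$, and such a $\widehat H$ is abelian of order dividing $|H|$ and split by $\Bbbk$ because $H$ is; since the maximal subgroups of $N$ are subgroups of those of $\widehat M$, the field $\Bbbk$ is a good splitting field for $N$. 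Consequently $\Bbbk N\cong\Bbbk^{N}$ and $\Bbbk M\cong\Bbbk^{M}$, so that $|\widehat N|=|N|$ and $|\widehat M|=|M|$, and $\eta_M\colon M\to\widehat{\widehat M}$ is an isomorphism.

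Now I would bring in the evaluation homomorphism $\psi\colon M\to\widehat N$ given by $\psi(m)(\rho)=\rho(m)$ for $\rho\in N$; this is a homomorphism of monoids because multiplication in $\widehat N$ is pointwise, and by construction $\psi(m)=\psi(m')$ if and only if $\rho(m)=\rho(m')$ for all $\rho\in N$. Thus $\psi$ is injective precisely when $N$ separates the points of $M$. For the forward implication, if $N$ separates the points of $M$ then $\psi$ is an injection of finite sets and
\[
|M|\le|\widehat N|=|N|\le|\widehat M|=|M|,
\]
where the middle inequality comes from the inclusion $N\subseteq\widehat M$; hence $|N|=|\widehat M|$ and, $N$ being a subset of $\widehat M$, we get $N=\widehat M$. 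For the converse, if $N=\widehat M$ then $\psi$ coincides with $\eta_M$, which is injective, so $N$ separates the points of $M$.

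The conceptual heart of the argument is this cardinality chain: recognizing that one should pass to $\widehat N$ and count, rather than argue directly inside $\widehat M$, is what makes everything collapse. The only other point requiring care is the verification that $\Bbbk$ remains a good splitting field for $N$ (so that Pontryagin duality is legitimately invoked for $N$); this is routine once one unwinds the structure of $\widehat M$ as a semilattice of dual groups, but it should be stated explicitly.
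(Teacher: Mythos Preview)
Your proof is correct, but it takes a genuinely different route from the paper's. The paper proves the nontrivial direction (``separates $\Rightarrow$ generates'') by invoking Corollary~\ref{Steinbergcor}\eqref{Steinbergcor.2}, i.e., R.~Steinberg's theorem: the direct sum $V=\bigoplus_{\chi\in X}\chi$ is effective, so every simple $\Bbbk M$-module (that is, every element of $\widehat M$) occurs as a composition factor of some $V^{\otimes n}$; since the composition factors of $V^{\otimes n}$ are exactly the $n$-fold products of elements of $X$, this forces $\langle X\rangle=\widehat M$. Your argument instead bypasses Steinberg's theorem entirely, replacing it with a duality-plus-cardinality count: pass to $N=\langle X\rangle$, check it is a commutative inverse monoid for which $\Bbbk$ is still a good splitting field, and then squeeze $|M|\le|\widehat N|=|N|\le|\widehat M|=|M|$ via the evaluation map $M\to\widehat N$. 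What the paper's approach buys is brevity and a direct link to the representation-theoretic machinery already set up in Section~\ref{s1}; what your approach buys is self-containment (no appeal to tensor powers or composition series), at the cost of the auxiliary verification that $\Bbbk$ remains a good splitting field for $N$. That verification is routine, as you note, since maximal subgroups of $N$ sit inside those of $\widehat M\cong M$, and the exponent of any such subgroup divides that of the ambient maximal subgroup of $M$.
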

\begin{proof}
Since $\widehat{M}$ separates points of $M$, so does any generating set.  Conversely, suppose that $X\subseteq \widehat{M}$ separates points.  Then the direct sum $V$ of the representations in $X$ is an effective module.  Corollary~\ref{Steinbergcor}\eqref{Steinbergcor.2} implies that each element of $\widehat{M}$ is a composition factor of some tensor power of $V$.  But the composition factors of $V^{\otimes n}$ are the elements of the $n$-fold product $X^{n}$.  Thus $X$ generates $\widehat{M}$.
\end{proof}

A representation of a commutative inverse monoid $M$ over a good splitting field is effective if and only if it is a direct sum of one-dimensional representations separating points.  Thus we have the following immediate consequence of Proposition~\ref{seppoints}.

\begin{theorem}\label{comminv}
Let $M$ be a finite commutative inverse monoid and $\Bbbk$ a good splitting field for $M$.  Then $\effdim_{\Bbbk}(M)$ is the minimum number of elements needed to generate $\widehat{M}$ as a monoid.
\end{theorem}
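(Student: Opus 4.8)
The plan is to deduce Theorem~\ref{comminv} directly from Proposition~\ref{seppoints} together with the structural remarks that precede it. First I would recall that, since $\Bbbk$ is a good splitting field for $M$, the algebra $\Bbbk M$ is semisimple with all simple modules one-dimensional, so every finite-dimensional $\Bbbk M$-module decomposes as a direct sum of one-dimensional representations, and the one-dimensional representations of $M$ over $\Bbbk$ are by definition exactly the elements of $\widehat M=\hom(M,\Bbbk)$. Consequently, if $V$ is any $\Bbbk M$-module of dimension $d$, then $V\cong \chi_1\oplus\cdots\oplus\chi_d$ for some (not necessarily distinct) $\chi_i\in\widehat M$, and the representation $V$ is effective precisely when the set $X=\{\chi_1,\dots,\chi_d\}$ separates the points of $M$ (an element $s$ acts on $V$ as the scalar matrix $\mathrm{diag}(\chi_1(s),\dots,\chi_d(s))$, so $s$ and $t$ act the same iff $\chi_i(s)=\chi_i(t)$ for all $i$).

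Next I would put the two halves together. For the lower bound: if $\effdim_{\Bbbk}(M)=d$, choose a minimal effective module $V$ and write it as above with underlying set $X\subseteq\widehat M$ of size at most $d$; by Proposition~\ref{seppoints}, $X$ generates $\widehat M$ as a monoid, so $\widehat M$ is generated by at most $d$ elements, giving $d\ge$ (minimum number of monoid generators of $\widehat M$). For the upper bound: let $X=\{\chi_1,\dots,\chi_m\}$ be a minimal generating set of $\widehat M$ as a monoid, with $m$ the minimum such number. Since generating sets separate points (as $\widehat M$ itself separates points of $M$ by the biduality $\eta\colon M\hookrightarrow\widehat{\widehat M}$ being injective, which holds because $M$ is already a commutative inverse monoid), Proposition~\ref{seppoints} applies and $X$ separates points; hence $V=\chi_1\oplus\cdots\oplus\chi_m$ is an effective module of dimension $m$, so $\effdim_{\Bbbk}(M)\le m$. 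Combining the two inequalities yields the theorem.

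The only subtlety I anticipate — and it is minor — is the bookkeeping about multiplicities: a minimal effective module might a priori repeat a character, so its \emph{underlying set} $X$ could be smaller than its dimension, which is exactly why the lower-bound direction only needs $|X|\le d$ rather than equality; conversely one must note that a monoid generating set of $\widehat M$ automatically contains the identity character or can reach it, but this is irrelevant since we only need \emph{some} effective module of dimension $m$, and repeating characters only increases dimension. There is also the implicit point that $\widehat M$ is finite (so "minimum number of generators" is well-defined), which follows since $\widehat M$ is a submonoid of the finite monoid $\Bbbk^{\,|M|}$ restricted to the finitely many characters — more precisely, each $\chi\in\widehat M$ takes values that are either $0$ or roots of unity of order dividing the exponent of a maximal subgroup of $M$, so $\widehat M$ is finite. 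None of these points presents a genuine obstacle; the real content is entirely in Proposition~\ref{seppoints}, and the theorem is essentially a restatement of it in the language of effective dimension.
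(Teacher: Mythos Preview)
Your proof is correct and follows essentially the same approach as the paper: both deduce the theorem as an immediate consequence of Proposition~\ref{seppoints}, using that over a good splitting field every $\Bbbk M$-module is a direct sum of one-dimensional characters and that effectiveness of such a sum is equivalent to the underlying set of characters separating points. You have simply spelled out in detail (including the bookkeeping about multiplicities and finiteness of $\widehat M$) what the paper compresses into a single sentence.
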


Since a finite abelian group is isomorphic to its dual, we recover the following standard fact (see e.g.
\cite[Theorem~4]{Ka}):

\begin{corollary}
If $G$ is a finite abelian group and $\Bbbk$ is a splitting field whose characteristic does not divide $|G|$, then $\effdim_{\Bbbk}(G)$ is its minimal number of generators.
\end{corollary}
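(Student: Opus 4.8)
The plan is to derive this directly from Theorem~\ref{comminv}, using the fact that a finite abelian group, viewed as a commutative inverse monoid, equals its own dual under the hypotheses on $\Bbbk$. First I would note that a finite abelian group $G$ is in particular a finite commutative inverse monoid (every element has an inverse, namely its group inverse, and idempotents are central and commute; here the only idempotent is the identity). Since the characteristic of $\Bbbk$ does not divide $|G|$ and $\Bbbk$ is a splitting field for $G$ (the unique maximal subgroup of $G$), the field $\Bbbk$ is a good splitting field for $G$ in the sense defined above. Therefore Theorem~\ref{comminv} applies and gives $\effdim_{\Bbbk}(G)$ as the minimum number of monoid generators of $\widehat{G} = \hom(G,\Bbbk)$.

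Next I would identify $\widehat{G}$ with the usual Pontryagin dual (character group) of $G$. Since $G$ is a finite group, every monoid homomorphism $G \to \Bbbk$ lands in the units $\Bbbk^{\times}$ (the image of an element of finite order is invertible), so $\widehat{G} = \hom(G,\Bbbk^{\times})$ is literally the group of $\Bbbk$-valued characters, and its monoid structure under pointwise product coincides with its group structure. Because $\Bbbk$ is a splitting field for $G$ of characteristic prime to $|G|$, it contains enough roots of unity that $\widehat{G} \cong G$ as abstract abelian groups — this is the standard duality for finite abelian groups over such fields. Consequently the minimum number of monoid generators of $\widehat{G}$ equals the minimum number of group generators of $\widehat{G}$, which equals the minimum number of group generators of $G$.

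The only mild subtlety — and the step I would be most careful about — is the distinction between \emph{monoid} generators and \emph{group} generators of $\widehat{G}$: Theorem~\ref{comminv} counts monoid generators, whereas "minimal number of generators" of an abelian group conventionally means group generators. But this distinction is vacuous for a finite group: any finite submonoid of a group is a subgroup, so a subset generates $\widehat{G}$ as a monoid if and only if it generates it as a group. (Alternatively, one can just note that inverses of generators are themselves powers of those generators in a finite cyclic factor.) Hence the two notions of rank agree for $\widehat{G}$, and combining with $\widehat{G}\cong G$ we conclude $\effdim_{\Bbbk}(G)$ equals the minimal number of generators of $G$. I expect the proof in the paper to be essentially one or two sentences invoking Theorem~\ref{comminv} and self-duality, since all the real content has already been done.
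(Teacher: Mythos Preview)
Your proposal is correct and follows exactly the approach the paper takes: it simply states that a finite abelian group is isomorphic to its dual and invokes Theorem~\ref{comminv}. You have supplied more detail than the paper (which gives only the one-line justification), including the careful observation that monoid and group generation coincide for a finite group; this is a point the paper leaves implicit.
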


Let $L$ be a finite lattice, viewed as a monoid via its meet.  Then $\widehat{L}$ can be identified with $L$ equipped with its join operation (the homomorphisms $L\to \Bbbk$ are the characteristic functions of principal filters).  A non-zero element $e\in L$ is \emph{join irreducible} if $e=f\vee g$ implies $e=f$ or $e=g$.  For example, if
\begin{equation*}
e_{0}< e_{1}<\cdots < e_{n}
\end{equation*}
is a chain of idempotents of length $n$, then all elements except $e_{0}$ are join irreducible.  If $X$ is a finite set, then the join irreducible elements of the power set of $X$ are the singletons.
It is well known and easy to prove that the join irreducible elements of a finite lattice form the unique minimal generating set under the join operation.  Thus Theorem~\ref{comminv} admits the following corollary.

\begin{corollary}\label{lattice}\label{latticecase}
If $L$ is a finite lattice viewed as a monoid via the meet operation, then the effective dimension of $L$ over any field is the number of join irreducible elements of $L$
\end{corollary}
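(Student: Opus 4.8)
The plan is to deduce the statement directly from Theorem~\ref{comminv} by computing the dual monoid $\widehat{L}$ explicitly. First I would observe that $(L,\wedge)$ is a finite commutative inverse monoid: it is a commutative idempotent monoid with the top element $1_L$ as identity, and each element is its own inverse. Its maximal subgroups are all trivial, so every field $\Bbbk$ is a good splitting field for $L$ — the characteristic divides no maximal subgroup order, all being $1$, and the trivial group splits over every field. Hence Theorem~\ref{comminv} applies over an arbitrary field, and the problem reduces to finding the minimal number of monoid generators of $\widehat{L}$.

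Next I would identify $\widehat{L}=\hom(L,\Bbbk)$ with $(L,\vee)$. Since every element of $(L,\wedge)$ is idempotent, any monoid homomorphism $\chi\colon(L,\wedge)\to(\Bbbk,\cdot)$ takes idempotent values, hence values in $\{0,1\}$. Put $F=\chi^{-1}(1)$. From $\chi(a\wedge b)=\chi(a)\chi(b)$ one sees that $F$ is closed under meet, and if $a\le b$ with $a\in F$ then $a=a\wedge b$ forces $b\in F$, so $F$ is an up-set; moreover $1_L\in F$ since $\chi$ preserves the identity. Thus $F$ is a nonempty filter of $L$, and as $L$ is finite, $F={\uparrow}e$ for $e=\bigwedge F$. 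Conversely, for any $e\in L$ the characteristic function $\chi_{{\uparrow}e}$ of the principal filter ${\uparrow}e$ is a homomorphism, because ${\uparrow}e$ is closed under meet and contains $1_L$. Hence $e\mapsto\chi_{{\uparrow}e}$ is a bijection $L\to\widehat{L}$, and since ${\uparrow}e\cap{\uparrow}f={\uparrow}(e\vee f)$, the pointwise product satisfies $\chi_{{\uparrow}e}\cdot\chi_{{\uparrow}f}=\chi_{{\uparrow}(e\vee f)}$, with identity $\chi_{{\uparrow}0_L}$. So $\widehat{L}\cong(L,\vee)$ as monoids, the identity being the bottom element $0_L$.

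Finally I would invoke the elementary fact recalled just before the statement: the join irreducible elements of a finite lattice form its unique minimal generating set under $\vee$. Indeed, any generating set must contain every join irreducible $e$, for writing $e$ as a finite join of generators and repeatedly applying join irreducibility forces one of them to equal $e$ (the join is nonempty because $e\ne 0_L$); and conversely every element of $L$ is the join of the join irreducibles below it, so the join irreducibles generate. Combining this with the identification $\widehat{L}\cong(L,\vee)$ and Theorem~\ref{comminv} yields that $\effdim_{\Bbbk}(L)$ equals the number of join irreducible elements of $L$, for any field $\Bbbk$.

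All of these verifications are routine; the only steps I would take some care over are the claim that every field is a good splitting field for $L$, so that Theorem~\ref{comminv} may be invoked with no hypothesis on $\mathrm{char}\,\Bbbk$, and the precise description of the monoid homomorphisms $L\to\Bbbk$ as the characteristic functions of the principal filters (equivalently, that $\widehat{L}\cong(L,\vee)$). These two facts carry the whole argument, and neither is difficult.
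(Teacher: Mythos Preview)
Your proof is correct and follows essentially the same approach as the paper: observe that every field is a good splitting field for the semilattice $L$, identify $\widehat{L}$ with $(L,\vee)$ via characteristic functions of principal filters, and apply Theorem~\ref{comminv} together with the standard fact that the join irreducibles form the unique minimal generating set under $\vee$. You have simply supplied the details that the paper leaves to the reader in the paragraph preceding the corollary.
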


In particular, we recover the following known (and easy) lower bound on effective dimension, where the length of a chain is defined to be one less than the number of elements in the chain.

\begin{corollary}\label{idembound}
If a finite semigroup $S$ contains a chain of idempotents of length $n$,
then $\effdim_{\Bbbk}(S)\geq n$ for any field $\Bbbk$.
\end{corollary}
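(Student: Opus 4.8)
The plan is to reduce to a chain semilattice and quote Corollary~\ref{latticecase}. Suppose $e_0<e_1<\cdots<e_n$ is a chain of idempotents of $S$, where $\le$ denotes the natural partial order on idempotents (so $e\le f$ means $ef=fe=e$). First I would check that the subsemigroup $T\le S$ generated by $e_0,\dots,e_n$ is just the set $\{e_0,\dots,e_n\}$ with multiplication $e_ie_j=e_{\min(i,j)}$: indeed, whenever $i\le j$ we have $e_ie_j=e_je_i=e_i$ directly from the definition of the natural order, so $\{e_0,\dots,e_n\}$ is already closed under products. Hence $T$ is a finite chain, i.e.\ a totally ordered lattice, and as a monoid under meet it has identity $e_n$.

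Next, since $T\le S$, Proposition~\ref{elemfacts}\eqref{elemfacts.4} gives $\effdim_{\Bbbk}(S)\ge\effdim_{\Bbbk}(T)$, so it suffices to show $\effdim_{\Bbbk}(T)=n$. All maximal subgroups of $T$ are trivial, so every field is a good splitting field for $T$ and Corollary~\ref{latticecase} applies: $\effdim_{\Bbbk}(T)$ equals the number of join irreducible elements of the chain $e_0<e_1<\cdots<e_n$. As recorded in the discussion preceding Corollary~\ref{latticecase}, these are exactly $e_1,\dots,e_n$, so $\effdim_{\Bbbk}(T)=n$, and the corollary follows.

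There is no real obstacle here; the only point needing (elementary) care is the fact that idempotents comparable in the natural order multiply as in a chain meet-semilattice. For readers who prefer to avoid Corollary~\ref{latticecase}, one can argue directly: given an effective $S$-module $V$, set $V_i=e_iV$. Then $V_0\subseteq V_1\subseteq\cdots\subseteq V_n\subseteq V$, because $e_{i-1}=e_ie_{i-1}$ forces $e_{i-1}V\subseteq e_iV$. Moreover each inclusion $V_{i-1}\subseteq V_i$ is strict: if $e_iV=V_i=V_{i-1}=e_{i-1}V$, then $e_{i-1}$, being idempotent with image $e_{i-1}V$, acts as the identity on $e_iV$, so $e_{i-1}v=e_{i-1}(e_iv)=e_iv$ for all $v\in V$, contradicting injectivity of the representation on the distinct elements $e_{i-1},e_i$ of $S$. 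Hence $\dim_{\Bbbk}V\ge\dim_{\Bbbk}V_n\ge n$, giving $\effdim_{\Bbbk}(S)\ge n$.
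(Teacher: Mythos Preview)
Your proof is correct and follows exactly the approach intended by the paper: the corollary is presented immediately after Corollary~\ref{latticecase} and the observation that in a chain $e_0<\cdots<e_n$ all elements except $e_0$ are join irreducible, so the implicit argument is precisely your reduction to the chain subsemigroup $T$ via Proposition~\ref{elemfacts}\eqref{elemfacts.4} and Corollary~\ref{latticecase}. Your alternative direct argument using the flag $e_0V\subsetneq\cdots\subsetneq e_nV$ is also sound and is a nice self-contained variant that avoids the lattice machinery.
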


There is a useful reformulation of Corollary~\ref{idembound}.  An old result of Rhodes~\cite{Rh2} shows that the length of the longest chain of idempotents in a finite semigroup is the same as the length of the longest chain of regular $\mathcal{J}$-classes.

\begin{corollary}\label{Jbound}
Let $S$ be a finite semigroup containing a chain of regular $\mathcal{J}$-classes of length $n$.
Then $\effdim_{\Bbbk}(S)\geq n$ for any field $\Bbbk$.
\end{corollary}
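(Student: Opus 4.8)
The plan is to deduce Corollary~\ref{Jbound} directly from Corollary~\ref{idembound} together with the cited result of Rhodes~\cite{Rh2}. The two corollaries are connected by a purely semigroup-theoretic invariant: the length of the longest chain of idempotents, partially ordered by $e\le f \iff ef=fe=e$.

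First I would recall the statement of Rhodes' theorem in the form we need: in a finite semigroup $S$, the maximal length of a chain $e_0 < e_1 < \cdots < e_n$ of idempotents (in the natural partial order above) equals the maximal length of a chain $J_0 < J_1 < \cdots < J_n$ of regular $\mathcal{J}$-classes (ordered by the usual $\mathcal{J}$-order). The nontrivial direction is that a chain of regular $\mathcal{J}$-classes can be lifted to a chain of idempotents of the same length: given regular $\mathcal{J}$-classes $J_0 < \cdots < J_n$, one picks idempotents $e_i \in J_i$ (possible since each $J_i$ is regular) and then argues, working down the chain, that one can replace the chosen idempotents successively so that $e_{i}$ lies below $e_{i+1}$ in the idempotent order; this uses the fact that if $J < J'$ with $e'$ an idempotent in $J'$, then $e'Se'$ meets $J$ in a regular $\mathcal{J}$-class of the local monoid $e'Se'$, which is again a finite semigroup, and one applies induction on $|S|$ or on the chain length. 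I would not reproduce this argument — it is exactly~\cite{Rh2} — but simply cite it.

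Then the proof is immediate: if $S$ contains a chain of regular $\mathcal{J}$-classes of length $n$, Rhodes' theorem gives a chain of idempotents of length $n$ in $S$, and Corollary~\ref{idembound} yields $\effdim_{\Bbbk}(S)\ge n$ for every field $\Bbbk$. Concretely, the chain of idempotents $e_0 < \cdots < e_n$ generates a submonoid whose idempotents include a chain of length $n$; Corollary~\ref{idembound} (whose own proof runs through Corollary~\ref{lattice}, realizing the chain $e_0 < \cdots < e_n$ as a sublattice with $n$ join-irreducible elements, hence of effective dimension $n$, and then applying Proposition~\ref{elemfacts}\eqref{elemfacts.4}) then applies verbatim.

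The only genuine mathematical content beyond bookkeeping is Rhodes' lifting result, and since we are permitted to invoke previously stated results — and here we invoke an external one with a precise citation — there is essentially no obstacle. If one wanted to be self-contained, the main obstacle would be the $\mathcal{J}$-class-to-idempotent lifting, which requires care in handling the intermediate $\mathcal{J}$-classes and passing to local monoids $eSe$; but for the purposes of this paper it suffices to observe that Corollary~\ref{Jbound} is nothing more than the translation of Corollary~\ref{idembound} through~\cite{Rh2}.
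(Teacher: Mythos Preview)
Your proposal is correct and is essentially identical to the paper's own argument: the paper simply observes that Corollary~\ref{Jbound} is a reformulation of Corollary~\ref{idembound} via Rhodes' result in~\cite{Rh2} equating the maximal lengths of chains of idempotents and of regular $\mathcal{J}$-classes. Your added sketch of how the lifting works is more detail than the paper provides, but the logical route is the same.
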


Let $\mathcal{PT}_{n}$ be the monoid of all partial transformations on an $n$-element set and $\mathcal{IS}_{n}$ the
submonoid of partial injective maps. The natural representations of both these semigroups are of degree $n$.  They also both have a chain of idempotents of length $n$.  Thus we have the folklore result that both these monoids have effective dimension $n$ over any field.

Let $L_{n}$ be the lattice consisting of a top, a bottom and $n$ incomparable elements.  Then it has $n$ join irreducible elements and so $L_{n}$ has effective dimension $n$.  Notice that both $\mathcal{PT}_{n}$ and $\mathcal{IS}_{n}$ contain a copy of $L_{n}$ consisting of the identity, the zero element (the nowhere defined map)
and all  the partial identity  transformations with singleton domains.

\section{Effective dimension of generalized group mapping semigroups}\label{s3}

The important notion of a generalized group mapping semigroup was introduced by Krohn and Rhodes in~\cite{KR}.  A non-trivial finite semigroup $S$ is called a \emph{generalized group mapping (GGM)} semigroup if it contains a ($0$-)minimal ideal $I$ on which it acts effectively on both the left and right.  This ideal is necessarily unique and regular and is called the \emph{distinguished ideal}  of $S$.  See~\cite[Chapter 8]{Ar} or~\cite[Chapter 4]{RS} for details.

If the distinguished ideal contains a non-trivial maximal subgroup, then the semigroup is called \emph{group mapping}; otherwise, it is called  \emph{AGGM}.  The ``A'' stands for aperiodic (a common term for finite semigroups with trivial maximal subgroups).  An AGGM semigroup must contain a zero element and so the distinguished ideal is $0$-minimal.

For example, $\mathcal{PT}_{n}$ and $\mathcal{IS}_{n}$ are AGGM semigroups, as is the monoid $\mathcal{B}_{n}$ of all binary relations on an $n$-element set.  Indeed, for both $\mathcal {PT}_n$ and $\mathcal{IS}_{n}$, the non-zero elements of the unique $0$-minimal ideal are the rank $1$ elements.  The action on the left of the set of rank $1$ partial identity maps is a copy of the original action in both cases, hence effective. (We assume partial transformations act on the left here.)  On the other hand the composition $1_{\{a\}}f$ is the constant map from $f^{-1}(a)$ to $a$ and so the action on the right of the $0$-minimal ideal is also effective in both cases.  If we view $\mathcal{B}_{n}$ as the monoid of $n\times n$ boolean matrices, then the non-zero elements of the unique $0$-minimal ideal are the products $v^T\cdot w$ of a non-zero column vector and with a non-zero row vector~\cite{KiRo}.  From this it is clear that $\mathcal{B}_{n}$ acts effectively on the left and right of this ideal.  The monoid $\mathrm{Mat}_{n\times n}(\mathbb{F}_{q})$ is group mapping if $q>2$ and is AGGM for $q=2$ (the unique $0$-minimal ideal consists of the rank $0$ and rank $1$ maps). The full transformation monoid $\mathcal{T}_{n}$ on an $n$-element set is not generalized group mapping.

It was shown by Rhodes that a non-trivial finite semigroup has an effective irreducible representation if and only if it is generalized group mapping and in the group mapping case, the non-trivial maximal subgroup of the distinguished ideal has an effective irreducible representation~\cite{Rh}.

In this section, we compute the effective dimension of an arbitrary AGGM semigroup and of certain group mapping semigroups. A key ingredient in the former case is the fact that an AGGM semigroup is subdirectly indecomposable, that is, has a unique minimal non-trivial congruence.
The following is~\cite[Theorem 4.20]{RS} (and its proof).

\begin{theorem}\label{sdi}
An AGGM semigroup is subdirectly indecomposable.  The unique minimal non-trivial congruence is the one collapsing the distinguished ideal to zero.
\end{theorem}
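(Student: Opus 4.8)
The plan is to prove the stronger (second) assertion directly: I will show that every congruence $\theta\neq\Delta$ on an AGGM semigroup $S$ contains the Rees congruence $\rho_I$ of its distinguished ideal $I$, where $\rho_I$ is the congruence whose only non-singleton class is $I$ (equivalently, the one collapsing $I$ to $0$). Since $I$ is nontrivial, $\rho_I$ is itself nontrivial, so this simultaneously shows that $S$ has a unique minimal nontrivial congruence, namely $\rho_I$. The whole proof reduces to a single claim: if $\theta\neq\Delta$, then $z\mathrel{\theta}0$ for some nonzero $z\in I$. Granting the claim, the principal ideal $S^1zS^1$ is a nonzero ideal contained in $I$, hence equals $I$ by $0$-minimality; writing an arbitrary $y\in I$ as $y=uzv$ with $u,v\in S^1$ gives $y=uzv\mathrel{\theta}u0v=0$, so $\theta$ collapses $I$ and $\rho_I\subseteq\theta$.

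To prove the claim I first push $\theta$ into $I$. If $\theta|_I=\Delta_I$, choose $a\neq b$ with $a\mathrel{\theta}b$; then $xa\mathrel{\theta}xb$ with $xa,xb\in I$ for every $x\in I$, forcing $xa=xb$ for all $x\in I$, i.e.\ $a$ and $b$ induce the same right translation of $I$, contradicting effectiveness of the right action. Hence there are distinct $c,d\in I$ with $c\mathrel{\theta}d$, and if either equals $0$ the claim holds. So assume $c,d\neq 0$. I coordinatize $I$ as a Rees matrix semigroup $M^0(\{1\};A,B;C)$ over the trivial group — legitimate because the distinguished ideal is regular and $0$-minimal, hence completely $0$-simple, and aperiodic, so $C$ is a $\{0,1\}$-matrix with no zero row or column. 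The structural point I would establish next is that effectiveness forces $C$ to be \emph{reduced}, i.e.\ to have pairwise distinct rows and pairwise distinct columns: two distinct nonzero elements of $I$ in a common $\mathcal R$-class act identically by left multiplication on all of $I$ as soon as the corresponding rows of $C$ coincide, which contradicts effectiveness of the left action, and dually for columns and the right action. Now write $c=(a_1,b_1)$, $d=(a_2,b_2)$. If $a_1\neq a_2$, distinctness of the columns $a_1,a_2$ of $C$ gives a row index $b$ with $C_{b,a_1}\neq C_{b,a_2}$, and left-multiplying $c\mathrel{\theta}d$ by a suitable element of $I$ then annihilates exactly one of $c,d$ and leaves the other a nonzero element of $I$, which is the desired $z$. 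If $a_1=a_2$ (so $b_1\neq b_2$), distinctness of the rows $b_1,b_2$ produces, symmetrically after a right multiplication, a nonzero $z\in I$ with $z\mathrel{\theta}0$. This establishes the claim.

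The only real obstacle is the case $c,d\neq 0$: inside $I$ alone, two distinct elements of a common Green class need not be separable by the multiplications coming from $I$ itself — precisely what happens when $C$ has a repeated row or column — so $0$-simplicity of $I$ is not enough on its own. The hypothesis that $S$, not merely $I$, acts effectively on both sides of $I$ is exactly what resolves this, and it is consumed entirely in the reduction ``effective $\Rightarrow$ $C$ reduced''; aperiodicity is used to make $\mathcal H$-classes trivial and $C$ a $\{0,1\}$-matrix, which is what lets ``$C$ reduced'' close the argument. Everything else — the $0$-minimality spreading step and the reduction to $\theta|_I\neq\Delta_I$ — is routine.
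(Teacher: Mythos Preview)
Your argument is correct. The paper does not actually prove this theorem; it simply quotes it as \cite[Theorem~4.20]{RS}. So there is nothing to compare against line by line, but your self-contained proof works and uses exactly the structural ingredients the paper later invokes independently: that the distinguished ideal is completely $0$-simple with trivial maximal subgroups, and that effectiveness of the two-sided action forces the structure matrix to have pairwise distinct rows and columns (the paper cites this last fact separately as \cite[Proposition~4.7.14]{RS}). Your reduction ``$\theta|_I\neq\Delta_I$'' via effectiveness of the right action, followed by the Rees-coordinate argument producing a nonzero $z\in I$ with $z\mathrel{\theta}0$, and then the spread over $I$ via $0$-minimality, is the standard route and is carried out cleanly. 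One cosmetic remark: in the spreading step you only need that every nonzero $y\in I$ lies in $S^1zS^1$, which follows from $I\setminus\{0\}$ being a single $\mathcal J$-class; whether $0$ itself lies in $S^1zS^1$ is irrelevant since $0\mathrel{\theta}0$ trivially.
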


Let $S$ be a finite AGGM semigroup with distinguished ideal $I$.  Then by Rees's Theorem there is a $(0,1)$-matrix $P$ (say, of dimensions $m\times n$), called the \emph{structure matrix} of $I$, with no zero rows or columns, such that $I$ can be described as the semigroup of all $n\times m$-matrix units, together with the zero matrix, with multiplication given by $A\odot B=APB$.  The matrix $P$ is unique up to left and right multiplication by permutation matrices.  The fact that $I$ is itself AGGM implies that $P$ has distinct rows and columns; see~\cite[Proposition~4.7.14]{RS}. Let $z$ be the zero of $S$.  It follows from the classical representation theory of finite semigroups (see \cite{RZ}) that there is a unique simple $\Bbbk S$-module $M$ such that $zM=0$ and $IM\ne 0$.  Moreover, $\dim M$ is the rank of $P$.

\begin{theorem}\label{aggmeff}
Let $S$ be a finite AGGM semigroup with distinguished ideal $I$ and zero element $z$.  Let $M$ be the unique simple $\Bbbk S$-module with $zM=0$ and $IM\ne 0$.  Then $M$ is effective and is a composition factor of every effective module.  Thus $\effdim_{\Bbbk}(S)$ is the rank of the structure matrix of $I$.
\end{theorem}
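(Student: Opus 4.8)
The plan is to establish the three claims of Theorem~\ref{aggmeff} in sequence: that $M$ is effective, that $M$ is a composition factor of every effective module, and then to conclude that $\effdim_{\Bbbk}(S)$ equals the rank of the structure matrix $P$ of $I$. For the first claim, I would use the subdirect indecomposability of $S$ from Theorem~\ref{sdi}. The kernel of the representation $\varphi\colon S\to \mathrm{End}_{\Bbbk}(M)$ is a congruence on $S$; if $\varphi$ were not injective, then this congruence would be non-trivial, hence would contain the unique minimal non-trivial congruence, namely the one collapsing $I$ to zero. But that would force $\varphi$ to identify all of $I$ with $z$, contradicting $IM\neq 0$ together with $zM=0$. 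Hence $\varphi$ is injective and $M$ is effective.

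For the second claim, let $V$ be any effective module. Since $V$ is effective, in particular the elements of $S$ act differently than $z$ does on $V$; more to the point, $IV\neq 0$, since otherwise all elements of $I$ would act as $0$ on $V$ (using $z\in I$ and that $I$ is an ideal, so $sI\subseteq I$ acts as zero too), and then $V$ would fail to separate distinct elements of $I$ (which exist because $I$ has at least two elements, being $0$-minimal and acted on effectively). Now $IV\neq 0$ means some composition factor $L$ of $V$ satisfies $IL\neq 0$. Since every non-zero element of $I$ generates $I$ as an ideal, and $I$ acts nilpotently on any module where it is not faithful... more simply: the simple modules $L$ with $IL\neq 0$ are exactly those with $zL=0$ and $IL\neq 0$, and by the cited representation theory (see~\cite{RZ}) there is a \emph{unique} such simple module, namely $M$. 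Hence $M$ is a composition factor of $V$. Here I should double-check the claim that $zL=0$ for such $L$: if $IL\neq 0$ then since $I$ is regular, $I$ contains an idempotent $e$ acting non-trivially, and the simple modules on which $I$ acts non-trivially factor through the Rees quotient structure, where $z$ maps to $0$.

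The conclusion is then immediate: by the second claim, $\dim V\geq \dim M$ for every effective module $V$, so $\effdim_{\Bbbk}(S)\geq \dim M$; by the first claim, $M$ itself is effective, so $\effdim_{\Bbbk}(S)\leq \dim M$; thus $\effdim_{\Bbbk}(S)=\dim M$, which equals the rank of $P$ as recalled just before the theorem statement. The main obstacle I anticipate is the second claim, specifically pinning down precisely why $IV\neq 0$ for an effective module and why the only simple module not killed by $z$ but not killed by $I$ is $M$; the first point needs the observation that $I$ has more than one element and $S$ acts effectively on it (so $V$ must too), and the second is a standard but not entirely trivial fact from the classical theory of semigroup representations that the simple modules with apex $I$ (in the sense of Munn--Ponizovskii) are in bijection with simple modules of the structure group, which here is trivial, leaving exactly one such $M$. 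Everything else is a short argument from Theorem~\ref{sdi} and the recalled facts.
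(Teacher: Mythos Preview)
Your approach is essentially the paper's, and the first and third claims are handled exactly as the paper does. The one soft spot is the second claim: the implication ``$IV\neq 0$ implies some composition factor $L$ has $IL\neq 0$'' is not automatic (it would fail for a nilpotent $I$), and your attempted justifications trail off. The paper closes this gap cleanly in two moves you should adopt: first pass from $V$ to $V/zV$, which is still effective, so that one may assume $zV=0$ from the outset (this also dissolves your side-worry about whether $zL=0$); then, rather than arguing with $I$ as a whole, fix a non-zero idempotent $e\in I$ and take the least index $m$ in a composition series $0=V_0\subseteq\cdots\subseteq V_n=V$ with $eV_m\neq 0$, so that $e(V_m/V_{m-1})\cong eV_m\neq 0$ and hence $V_m/V_{m-1}\cong M$. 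The idempotent is doing the work that your ``$I^2=I$''-flavored remarks were reaching for.
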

\begin{proof}
The module $M$ is effective by Theorem~\ref{sdi} since the associated representation does not collapse $I$ to $z$.
If $V$ is an effective module, then so is $V/zV$ and so without loss of generality we may assume that $z$ annihilates $V$.  Let
\begin{equation*}
0=V_{0}\subseteq V_{1}\subseteq \cdots \subseteq V_{n}=V
\end{equation*}
be a composition series.  Let $e\in I$ be an idempotent with $e\neq z$.  Then there is a smallest $m$ such that $eV_{m}\ne 0$.  Then $e(V_{m}/V_{m-1})\cong eV_{m}/eV_{m-1}\cong eV_{m}\ne 0$ and hence $V_{m}/V_{m-1}\cong M$.  This completes the proof.
\end{proof}

For instance the structure matrices for the distinguished ideals of $\mathcal{PT}_{n}$ and $\mathcal{IS}_{n}$ have rank $n$.  This shows that the natural representations of these monoids are the unique minimal effective ones.  The distinguished ideal of $\mathrm{Mat}_{n\times n}(\mathbb{F}_{2})$ has rank $2^{n}-1$ by a result of Kov\'acs~\cite{Kovacs} (see the discussion below where general $q$ is considered). The unique minimal effective representation is then the linearization of the action by partial maps on $\mathbb{F}_{2}^{n}\setminus \{0\}$.

The results of Kim and Roush in \cite{KiRo} imply that the rank of the structure matrix for the distinguished ideal of $\mathcal B_{n}$ is $2^{n}-1$.  Thus our results slightly improve on their result by showing that $\mathcal{B}_{n}$ has a unique minimal effective module of this degree.

Let us now consider the case of a group mapping semigroup $S$ with zero element $z$.  The distinguished ideal $I$ is then isomorphic to a Rees matrix semigroup $\mathcal{M}^{0}(G,n,m,P)$ where $G$ is a maximal subgroup of $I\setminus \{z\}$ and $P$ is an $m\times n$-matrix with entries in $G\cup \{0\}$.  Here $n$ is the number of $\mathcal{R}$-classes and $m$ is the number of $\mathcal{L}$-classes of the $\mathcal{J}$-class $I\setminus \{z\}$.  The matrix $P$ is unique up to left and right multiplication by monomial matrices over $G$.  See~\cite{CP} for more on Rees matrix semigroups (but our notation follows~\cite[Appendix A]{RS}).  Let $\Bbbk$ be a field.  Then it is known that  $P$ is invertible over $\Bbbk G$ if and only if the algebra $\Bbbk I/\Bbbk z$ is unital, see~\cite[Chapter 5]{CP}.  More generally, $P$ has a left (resp.\ right) inverse if and only if $\Bbbk I/\Bbbk z$ has a left (resp.\ right) identity, again see~\cite[Chapter 5]{CP}.

As an example, consider the monoid $\mathrm{Mat}_{n\times n}(\mathbb{F}_{q})$ of $n\times n$-matrices over the finite field $\mathbb{F}_{q}$ of $q$ elements.  Assume that $q>2$.  Then $\mathrm{Mat}_{n\times n}(\mathbb{F}_{q})$ is a group mapping monoid with distinguished ideal the matrices of rank at most $1$.  The non-trivial maximal subgroup is isomorphic to $\mathbb{F}_{q}^{\times}$.  A result of Kov\'acs in \cite{Kovacs} implies that the structure matrix is invertible whenever the characteristic of $\Bbbk$ does not divide $q$.  The case of the complex field was first proved by Okni\'nski and Putcha in~\cite{PO}.

We shall need the following result, which is~\cite[Lemma~4.7.8]{RS}.

\begin{lemma}\label{injcrit}
Let $\varphi\colon S\to T$ be a semigroup homomorphism with $S$ a group mapping semigroup.  Let $G$ be a non-trivial maximal subgroup of the distinguished ideal.  Then $\varphi$ is injective if and only if its restriction to $G$ is injective.
\end{lemma}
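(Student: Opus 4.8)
The plan is to exploit the universal property of the distinguished ideal of a group mapping semigroup: the unique minimal non-trivial congruence does not collapse any pair inside a maximal subgroup $G$ of the distinguished ideal $I$, so controlling $\varphi$ on $G$ suffices to control it everywhere. First, the forward implication is immediate: if $\varphi$ is injective, then so is its restriction to any subset, in particular to $G$. So the real content is the converse, and we may assume $\varphi|_G$ is injective and must deduce $\varphi$ is injective.

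For the converse, I would argue via congruences. Let $\sim$ be the kernel congruence of $\varphi$, i.e.\ $s\sim t$ iff $\varphi(s)=\varphi(t)$; we must show $\sim$ is trivial. Since $\varphi|_G$ is injective, $\sim$ does not collapse any two distinct elements of $G$. Now I would invoke the structure theory of group mapping semigroups (from~\cite[Chapter 4]{RS}): a group mapping semigroup is subdirectly indecomposable — the group mapping analogue of Theorem~\ref{sdi} — and its unique minimal non-trivial congruence is the one induced on the distinguished ideal by collapsing the maximal subgroup $G$ to a point (equivalently, the Rees congruence modulo the subgroup, extended to be the identity off $I$). Concretely, if $\sim$ were non-trivial it would have to contain this minimal congruence, which in particular identifies distinct elements of $G$ (as $G$ is non-trivial), contradicting injectivity of $\varphi|_G$. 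Hence $\sim$ is trivial and $\varphi$ is injective.

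Alternatively — and this is probably the cleanest route to present, avoiding an appeal to subdirect indecomposability — one argues directly using the Rees coordinates. Write $I\setminus\{z\}\cong \mathcal{M}^0(G,n,m,P)$ with $P$ an $m\times n$ matrix over $G\cup\{0\}$ having no zero row or column; since $S$ acts effectively on the left and right of $I$, it is enough to show $\varphi$ is injective on $I$, and then that $\varphi(s)=\varphi(t)$ for $s,t\in S$ forces $s=t$ by multiplying into $I$ on both sides. For injectivity on $I$: suppose $\varphi(a)=\varphi(b)$ with $a,b\in I\setminus\{z\}$; choosing coordinates $a=(i,g,\lambda)$, $b=(i',g',\lambda')$ and multiplying by suitable elements of $I$ on the left and right to move $a,b$ into a single $\mathcal{H}$-class (the one containing the idempotent $(i_0,1,\lambda_0)$ with $p_{\lambda_0 i_0}\ne 0$, which is a copy of $G$), one reduces to the case $a,b\in G$, where injectivity of $\varphi|_G$ applies; the entries $p_{\lambda i}$ needed are available because $P$ has no zero row or column. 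Then one lifts: if $\varphi(s)=\varphi(t)$ for $s,t\in S$, pick any $x,y\in I$; then $\varphi(xsy)=\varphi(xty)$ with $xsy,xty\in I$, hence $xsy=xty$ for all such $x,y$, and effectiveness of the two-sided action of $S$ on $I$ gives $s=t$.

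The main obstacle is the bookkeeping in the Rees-coordinate argument — specifically, verifying that one can always translate a pair of elements of $I$ into the same $\mathcal{H}$-class using only left and right multiplications by elements of $I$, which requires care when $\varphi(a)=\varphi(b)$ but $a,b$ lie in different $\mathcal{R}$- or $\mathcal{L}$-classes (one must first use the no-zero-row/column property of $P$ to reach a common $\mathcal{R}$-class and a common $\mathcal{L}$-class). Since all of this is packaged in~\cite[Chapter 4]{RS}, the honest thing is to cite~\cite[Lemma~4.7.8]{RS} directly, as the statement already does; the proof I would write is essentially a restatement of that reference, and I would keep it to the congruence-theoretic version above for brevity.
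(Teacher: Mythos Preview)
The paper does not prove this lemma at all; it is stated with the preface ``the following result, which is~\cite[Lemma~4.7.8]{RS}'' and no proof is given. Your final recommendation---to cite~\cite[Lemma~4.7.8]{RS} directly---is therefore exactly what the paper does.

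Since you went further and sketched two arguments, a brief comment on them. Your Rees-coordinate approach is the right one and is essentially how the result is proved in~\cite{RS}; the bookkeeping you flag as the obstacle is real but routine (one first shows that $\varphi(a)=\varphi(b)$ forces $a,b$ into the same $\mathcal{H}$-class by multiplying on one side at a time, rather than trying to translate both into a fixed $\mathcal{H}$-class simultaneously). Your congruence-theoretic sketch, however, has a genuine inaccuracy: the unique minimal non-trivial congruence on a group mapping semigroup is \emph{not} ``collapsing $G$ to a point, extended to be the identity off $I$''---that relation need not even be a congruence on $S$, and when $G$ is not simple it is certainly not minimal. The minimal congruence is induced by a minimal normal subgroup $N\lhd G$ and identifies $(i,g,\lambda)$ with $(i,g',\lambda)$ when $gN=g'N$. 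Your argument survives this correction, since any such minimal congruence still identifies distinct elements of $G$; but note that the statement ``group mapping semigroups are subdirectly indecomposable with minimal congruence non-trivial on $G$'' is essentially a reformulation of the lemma itself, so invoking it as an input is close to circular unless you are prepared to prove it independently---which brings you back to the Rees-coordinate argument.
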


We now compute the effective dimension of a group mapping semigroup when the structure matrix of the minimal ideal is left or right invertible.

\begin{theorem}\label{groupmapping}
Let $S$ be a group mapping semigroup with zero $z$ and distinguished ideal $I$.  Let $G$ be the maximal subgroup at an idempotent $e\in I\setminus \{z\}$.  Suppose that $\Bbbk$ is a field such that $\Bbbk I/\Bbbk z$ has a left (resp.\ right) identity (that is, the structure matrix of $I$ is left (resp.\ right) invertible over $\Bbbk G$).  Then
\begin{equation}\label{groupmapping.1}
\effdim_{\Bbbk}(S)=m\cdot \effdim_{\Bbbk}(G)
\end{equation}
where $m$ is the number of $\mathcal{R}$- (resp.\ $\mathcal{L}$)-classes of $I\setminus \{z\}$.
\end{theorem}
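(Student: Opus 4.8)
The plan is to prove the two inequalities $\effdim_{\Bbbk}(S)\le m\cdot\effdim_{\Bbbk}(G)$ and $\effdim_{\Bbbk}(S)\ge m\cdot\effdim_{\Bbbk}(G)$ separately, after first reducing to the left-invertible case. Passing to $S^{\mathrm{op}}$ interchanges the two one-sided actions on the distinguished ideal, so $S^{\mathrm{op}}$ is again group mapping, its $\mathcal{R}$-classes are the $\mathcal{L}$-classes of $S$, its maximal subgroup is $G^{\mathrm{op}}$, and $\Bbbk I/\Bbbk z$ has a right identity exactly when $(\Bbbk I/\Bbbk z)^{\mathrm{op}}$ has a left one; invoking Proposition~\ref{elemfacts}\eqref{elemfacts.6} and $\effdim_{\Bbbk}(G)=\effdim_{\Bbbk}(G^{\mathrm{op}})$, we may therefore assume that the structure matrix of $I$ has a left inverse $Q$ over $\Bbbk G$ and that $m$ is the number of $\mathcal{R}$-classes of $I\setminus\{z\}$. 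Fix a Rees matrix presentation of $I$ over $G$, with $m$ rows (indexed by the $\mathcal{R}$-classes), $\ell$ columns (indexed by the $\mathcal{L}$-classes) and structure matrix $P$, normalised so that $e$ sits in row $1$ and column $1$ with entry $P_{11}=1$; thus $Q$ is an $m\times\ell$ matrix over $\Bbbk G$ with $QP=I_{m}$. For $1\le i\le m$ let $\sigma_{i}$ be the element of $I$ in row $i$, column $1$, with group coordinate $1$; then $\sigma_{1}=e$ and $\sigma_{1},\dots,\sigma_{m}$ form a transversal of the $\mathcal{R}$-classes inside the $\mathcal{L}$-class $L_{e}$ of $e$.

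The ingredient common to both bounds is the following: if $V$ is an effective $S$-module with $zV=0$, then $eV$ is an effective $\Bbbk G$-module, and hence $\dim eV\ge\effdim_{\Bbbk}(G)$. This is immediate, since each $h\in G$ acts on $eV$ by $h\cdot(ev)=(he)v=hv$, so two elements of $G$ agreeing on $eV$ agree on all of $V$ and are therefore equal. For the upper bound I take a minimal effective $\Bbbk G$-module $W$; since $L_{e}\cup\{z\}$ is a left ideal of $S$, the quotient $\Bbbk L_{e}/\Bbbk z$ is a $(\Bbbk S,\Bbbk G)$-bimodule, and it is free of rank $m$ as a right $\Bbbk G$-module on the basis $\sigma_{1},\dots,\sigma_{m}$. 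The left $\Bbbk S$-module $V=(\Bbbk L_{e}/\Bbbk z)\otimes_{\Bbbk G}W$ then has dimension $m\cdot\effdim_{\Bbbk}(G)$; its summand $\sigma_{1}\otimes W$ is a $G$-submodule isomorphic to $W$, hence effective, so $V$ is effective over $G$ and therefore, by Lemma~\ref{injcrit}, effective over $S$. This establishes $\effdim_{\Bbbk}(S)\le m\cdot\effdim_{\Bbbk}(G)$, and uses no invertibility hypothesis.

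For the lower bound let $V$ be an arbitrary effective $S$-module; replacing $V$ by $V/zV$ we may assume $zV=0$, so $eV$ is an effective $\Bbbk G$-module by the observation above. The crux is to build, from the left inverse $Q$, elements $\rho_{1},\dots,\rho_{m}$ of $\Bbbk I/\Bbbk z\subseteq\Bbbk S$ satisfying $\rho_{i}\sigma_{j}=\delta_{ij}e$: one lets $\rho_{i}$ be the element supported on the $\mathcal{R}$-class of $e$ whose component in column $\lambda$ is the $(i,\lambda)$-entry of $Q$, and then $\rho_{i}\sigma_{j}=\delta_{ij}e$ is just the entrywise translation of $QP=I_{m}$. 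Granting this, each map $\sigma_{i}\colon eV\to V$ is injective (with left inverse $\rho_{i}$, since $\rho_{i}\sigma_{i}=e$ acts as the identity on $eV$), and the subspaces $\sigma_{1}(eV),\dots,\sigma_{m}(eV)$ of $V$ are independent (apply $\rho_{i}$ to a relation $\sum_{j}\sigma_{j}w_{j}=0$ to conclude $w_{i}=0$). Hence $V$ contains $\bigoplus_{i=1}^{m}\sigma_{i}(eV)$, whose dimension is $m\cdot\dim eV\ge m\cdot\effdim_{\Bbbk}(G)$. Together with the upper bound this gives the claimed equality.

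I expect the main obstacle to be exactly this construction of the $\rho_{i}$ together with the verification of $\rho_{i}\sigma_{j}=\delta_{ij}e$: this is the precise point at which left-invertibility of the structure matrix is used, and without it there is no reason why an effective module should contain $m$ independent copies of an effective $\Bbbk G$-module (so the bound genuinely drops). Everything else — the passage to $S^{\mathrm{op}}$, the fact that $eV$ is effective over $G$, the bimodule construction for the upper bound, and the appeal to Lemma~\ref{injcrit} — should be routine bookkeeping once a Rees matrix presentation of $I$ has been fixed.
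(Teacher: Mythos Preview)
Your proof is correct and follows essentially the same strategy as the paper: reduce to the left-invertible case via $S^{\mathrm{op}}$, obtain the upper bound by inducing a minimal effective $G$-module along the free rank-$m$ bimodule $Ae=\Bbbk L_e/\Bbbk z$ and invoking Lemma~\ref{injcrit}, and obtain the lower bound by showing that the multiplication map $Ae\otimes_{\Bbbk G}eV\to V$ is injective. The only difference is one of packaging: where the paper argues abstractly that the left identity of $\Bbbk I/\Bbbk z$ acts as the identity on $Ae\otimes_{\Bbbk G}eV$ and hence kills the kernel, you unpack this into an explicit dual system $\rho_i\sigma_j=\delta_{ij}e$ built from the entries of the left inverse $Q$ and verify independence of the $\sigma_i(eV)$ by hand.
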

\begin{proof}
Replacing $S$ by $S^{op}$ if necessary, we may assume that $\Bbbk I/\Bbbk z$ has a left identity.
First we construct an effective representation of dimension the right hand side of \eqref{groupmapping.1}.  Let $A=\Bbbk S/\Bbbk z$.  Then $eAe\cong \Bbbk G$.   Recall the well known fact (cf.~\cite{GMS}) that $Ae$ is a free right $\Bbbk G$-module of rank $m$.  Suppose that $V$ is a minimal effective module for $G$ and put $W=Ae\otimes_{\Bbbk G} V$.  Then because $eW\cong V$ and $V$ is effective for $G$, it follows that $W$ is an effective module for $S$ by Lemma~\ref{injcrit}.  But $\dim W=m\cdot \dim V$.  This proves that
$\effdim_{\Bbbk}(S)\leq m\cdot \effdim_{\Bbbk}(G)$.  Notice that this part of the proof does not use left invertibility of the structure matrix.

Conversely, suppose that $W$ is a minimal effective module for $S$.  In particular, $zW=0$ by Proposition~\ref{elemfacts}\eqref{elemfacts.2}.  Thus $W$ is an $A$-module.  Then since $W$ is effective, it follows that $eW$ is effective for $G$. Let $V=Ae\otimes_{\Bbbk G} eW$.  Then  by the usual adjunction, one has $\hom_{\Bbbk G}(eW,eW)\cong \hom_{A}(V,W)$.  The homomorphism $\psi$ corresponding to $1_{eW}$ is defined on elementary tensors by $ae\otimes ew\mapsto aew$.   Notice that if $v\in V$, then $ev=e\otimes e\psi(v)$ since this is true on elementary tensors.  Thus $e$ annihilates $\ker \psi$ and hence so does $AeA\cong \Bbbk I/\Bbbk z$.  But the left identity of $\Bbbk I/\Bbbk z$ acts as the identity on $Ae=\Bbbk Ie/\Bbbk z$ and hence on $V$.  It follows that $\ker \psi=0$ and so $V$ is isomorphic to a submodule of $W$.  Thus  $\dim W\ge \dim V=m\cdot \dim eW$, which is at least as large as the right hand side of \eqref{groupmapping.1}.  This completes the proof of the theorem.
\end{proof}

Applying this result to the monoid of matrices over a finite field yields:

\begin{corollary}
Let $q=p^{k}$ with $p$ prime and suppose that $\Bbbk$ is a field of characteristic different than $p$ such that the polynomial $x^{q-1}-1$ has $q-1$ distinct roots in $\Bbbk$.  Then
\begin{equation*}
\effdim_{\Bbbk}(\mathrm{Mat}_{n\times n}(\mathbb{F}_{q}))=\frac{q^{n}-1}{q-1}.
\end{equation*}
\end{corollary}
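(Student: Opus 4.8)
The plan is to obtain this as an immediate application of Theorem~\ref{groupmapping}, so that essentially all the work lies in verifying its hypotheses for $S=\mathrm{Mat}_{n\times n}(\mathbb{F}_{q})$ and in identifying the two factors on the right-hand side of \eqref{groupmapping.1}. When $q=2$ the relevant maximal subgroup is trivial and the monoid is AGGM rather than group mapping, but this case has already been recorded above: by Theorem~\ref{aggmeff} together with Kov\'acs' computation of the rank of the structure matrix one gets effective dimension $2^{n}-1=\tfrac{2^{n}-1}{2-1}$, as required. So I would assume $q>2$ from now on, in which case $\mathrm{Mat}_{n\times n}(\mathbb{F}_{q})$ is group mapping with distinguished ideal $I$ the set of matrices of rank at most $1$, with zero the rank-$0$ matrix, and with maximal subgroup at a rank-$1$ idempotent isomorphic to $\mathbb{F}_{q}^{\times}$.

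First I would check that Theorem~\ref{groupmapping} applies. Since $\mathrm{char}\,\Bbbk\neq p$ and $q=p^{k}$, the characteristic of $\Bbbk$ does not divide $q$; hence, by the result of Kov\'acs quoted above, the structure matrix of $I$ is invertible over $\Bbbk G$, so $\Bbbk I/\Bbbk z$ is unital and in particular has a left identity. Theorem~\ref{groupmapping} then gives $\effdim_{\Bbbk}(\mathrm{Mat}_{n\times n}(\mathbb{F}_{q}))=m\cdot\effdim_{\Bbbk}(\mathbb{F}_{q}^{\times})$, where $m$ is the number of $\mathcal{R}$-classes of $I\setminus\{z\}$. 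Two nonzero rank-one matrices are $\mathcal{R}$-equivalent precisely when they have the same column space, so these $\mathcal{R}$-classes are in bijection with the one-dimensional subspaces of $\mathbb{F}_{q}^{n}$; thus $m=\tfrac{q^{n}-1}{q-1}$ (and, by the transpose symmetry, the count of $\mathcal{L}$-classes is the same, so nothing depends on which one-sided identity we use).

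It remains to evaluate $\effdim_{\Bbbk}(\mathbb{F}_{q}^{\times})$. The group $\mathbb{F}_{q}^{\times}$ is cyclic of order $q-1$, and the hypothesis that $x^{q-1}-1$ has $q-1$ distinct roots in $\Bbbk$ says exactly that $\Bbbk$ contains a primitive $(q-1)$-st root of unity and that $\mathrm{char}\,\Bbbk\nmid q-1$ (a repeated root would occur otherwise). Hence $\Bbbk$ is a splitting field for $\mathbb{F}_{q}^{\times}$ whose characteristic does not divide $|\mathbb{F}_{q}^{\times}|$, so by the corollary to Theorem~\ref{comminv} on finite abelian groups its effective dimension equals its minimal number of generators, namely $1$ (as $q>2$). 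Substituting gives $\effdim_{\Bbbk}(\mathrm{Mat}_{n\times n}(\mathbb{F}_{q}))=\tfrac{q^{n}-1}{q-1}$. The one point I would be most careful about is this penultimate step: that the single hypothesis on the splitting of $x^{q-1}-1$ really packages together both conditions ("splitting field for $\mathbb{F}_q^\times$" and "characteristic coprime to $q-1$") needed to force the effective dimension of $\mathbb{F}_{q}^{\times}$ down to $1$; the remaining ingredients are the routine identification of Green's classes of rank-one matrices and the appeal to Kov\'acs' theorem.
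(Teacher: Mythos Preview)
Your proposal is correct and follows the same approach as the paper: handle $q=2$ via Theorem~\ref{aggmeff}, and for $q>2$ invoke Kov\'acs' theorem to verify the hypothesis of Theorem~\ref{groupmapping}, identify $m$ as the number of lines in $\mathbb{F}_q^n$, and use that $\effdim_\Bbbk(\mathbb{F}_q^\times)=1$. Your write-up is in fact more careful than the paper's in spelling out why the splitting hypothesis on $x^{q-1}-1$ forces both $\mathrm{char}\,\Bbbk\nmid q-1$ and the existence of a primitive $(q-1)$-st root of unity, which is exactly what is needed for the cyclic group $\mathbb{F}_q^\times$ to have effective dimension $1$.
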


\begin{proof}
We already proved this above for $q=2$.  Assume now that $q>2$.
The hypotheses of Theorem~\ref{groupmapping} hold by Kov\'acs theorem.  The distinguished ideal has maximal subgroup $\mathbb{F}_{q}^{\times}$, which has effective dimension $1$ over $\Bbbk$.  On the other hand the $\mathcal{R}$-class of a matrix is determined by its column space.  So the number of $\mathcal{R}$-classes of rank $1$ matrices is the number of lines in $\mathbb{F}^{n}_{q}$.  This completes the proof.
\end{proof}

It is easy to see that a group mapping inverse semigroup that is not a group has a zero.  The structure matrix can be chosen to be the identity in this case and so Theorem~\ref{groupmapping} applies.  For instance, it is known that any transitive subsemigroup of $\mathcal{IS}_{n}$ is generalized group mapping (this is due to Schein~\cite{Sch} with different terminology); see the discussion after~\cite[Proposition~4.9]{Ste2} and use that an inverse semigroup acts effectively on the left of an ideal if and only if it acts effectively on the right.  Thus the results of this section apply to such semigroups to compute the effective dimension.  Let us give some examples.

If $G$ is a non-trivial group, then the  partial transformation wreath product $G\wr \mathcal{IS}_{n}$ (in the sense of Eilenberg's book~\cite{E}) is a transitive inverse semigroup of partial bijections of $G\times \{1,\ldots, n\}$.  It is then a group mapping semigroup with maximal subgroup $G$ in the distinguished ideal.  One has
\begin{equation*}
\effdim_{\Bbbk}(G\wr \mathcal{IS}_{n})=n\cdot \effdim_{\Bbbk}(G).
\end{equation*}
For instance, the signed symmetric inverse monoid $\mathbb{Z}/2\mathbb{Z}\wr \mathcal{IS}_{n}$ has effective dimension $n$ over any field that is not of characteristic $2$.

Another example is the inverse semigroup
$\mathcal{P}\mathrm{Aut}(\mathbb{F}_q^n)$
of all partial linear bijections
on $\mathbb{F}_q^n$, studied in \cite{Ku}. It is again a transitive inverse semigroup of partial bijections.  It  is
group mapping with distinguished ideal
the maps of rank one and corresponding maximal
subgroup $\mathbb{F}_q^\times$. Therefore, from
Theorem~\ref{groupmapping} (and Theorem~\ref{aggmeff} if $q=2$) it follows that the
effective dimension of $\mathcal{P}\mathrm{Aut}(\mathbb{F}_q^n)$ over any splitting field for $\mathbb{F}_q^\times$
whose characteristic does not divide $q-1$ equals
$(q^n-1)/(q-1)$.

If $G$ is a non-trivial group, then the partial transformation wreath product $G\wr \mathcal{PT}_{n}$ is group mapping and the structure matrix has the block form
\begin{equation*}
\left(\begin{array}{cc} I_{n} \\ \ast \end{array}\right)
\end{equation*}
and hence is left invertible.  Thus Theorem~\ref{groupmapping} implies that
\begin{equation*}
\effdim_{\Bbbk}(G\wr \mathcal{PT }_{n})=n\cdot \effdim_{\Bbbk}(G).
\end{equation*}

\begin{remark}
{\rm
It is proved in~\cite{Rh2} that a finite semigroup $S$ has an effective completely reducible representation over a field of characteristic zero if and only if it is a subdirect product of generalized group mapping semigroups.  In this, case it follows from the results of~\cite{AMSV} that if $V$ is an effective module for $S$, then the direct sum of the composition factors of $V$ is also effective.  Thus $S$ has a minimal effective module that is semisimple.
}
\end{remark}

\section{Bergman's Lemma}\label{s4}

The following lemma was used by Kim and Roush in~\cite{KiRo}, where they attribute it to George Bergman.  They used it (and a variation) to compute the effective dimension of the semigroups of Hall matrices and reflexive binary relations.  The authors exploited this lemma in~\cite{MS} to compute the effective dimension of the $0$-Hecke monoid associated to a finite Coxeter group.

\begin{lemma}[Bergman]\label{bergmanlemma}
Let $S$ be semigroup and $L$ a left ideal of $\Bbbk S$ with simple socle.
Suppose that the socle of $L$ contains a non-zero element of the form $s-t$ with $s,t\in S$.
Then any $\Bbbk S$-module $V$ whose associated linear representation separates $s$ and $t$ contains $L$ as a submodule.
\end{lemma}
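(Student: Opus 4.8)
The plan is to exploit the module structure of the left ideal $L$ together with the fact that $s-t$ lies in its socle. Let me set up notation: write $W = \operatorname{soc}(L)$, which by hypothesis is a simple $\Bbbk S$-module containing the nonzero element $w := s-t$. Since $W$ is simple, $w$ generates $W$, i.e. $(\Bbbk S)^1 w = W$ — more precisely, since $\Bbbk S$ may be nonunital we should say $\Bbbk S^\bullet w = W$, but in any case $W$ is the smallest submodule of $L$ containing $w$. Now suppose $V$ is a $\Bbbk S$-module whose associated linear representation $\varphi$ separates $s$ and $t$; that is, there exists $v_0 \in V$ with $sv_0 \neq tv_0$, equivalently $w v_0 = (s-t)v_0 \neq 0$ in $V$.

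The key step is to produce a nonzero $\Bbbk S$-module homomorphism $f\colon L \to V$. Pick $v_0$ as above with $wv_0 \neq 0$. I would like to define a map $L \to V$ sending $\ell \mapsto \ell v_0$, but $L$ is a left ideal of $\Bbbk S$, not a cyclic module generated in an obvious way, so one must be slightly careful: the natural map is $\Bbbk S^\bullet \to V$, $a \mapsto a v_0$, which is a homomorphism of $\Bbbk S$-modules; restrict it to $L$ to get $f\colon L \to V$, $f(\ell) = \ell v_0$. This $f$ is a $\Bbbk S$-module map. It is nonzero because $f(w) = w v_0 \neq 0$ (using that $w \in W \subseteq L$). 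Hence $\ker f$ is a proper submodule of $L$. But $W = \operatorname{soc}(L)$ is the socle, so every nonzero submodule of $L$ contains $W$; if $\ker f$ were nonzero it would contain $W \ni w$, contradicting $f(w) \neq 0$. Therefore $\ker f = 0$, so $f$ embeds $L$ as a submodule of $V$.

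The main obstacle — really the only subtle point — is justifying that $\ker f = 0$ rather than merely that $f$ is nonzero. This rests on two facts that must be made precise: first, that $\operatorname{soc}(L)$ being simple (equivalently, $L$ having an \emph{essential} simple submodule, which is what "simple socle" should be taken to mean here) forces every nonzero submodule of $L$ to meet, hence contain, that socle; and second, that $w$ is a nonzero element of that socle, so any submodule avoiding $w$ must be zero. One should double-check the convention: "simple socle" here means the socle is a simple module (not that $L$ is simple), and this is exactly the condition that makes the socle an essential submodule. With that reading the argument closes. I would also remark, for completeness, that the hypothesis is used twice — $s - t$ being in the socle is what gives $w \in L$ and lets us apply the essentiality argument, while $s \neq t$ would not even be needed as a separate assumption since $s - t \neq 0$ is given. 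No representation-variety or model-theoretic machinery is required; this is pure module theory over the finite-dimensional algebra $\Bbbk S^\bullet$.
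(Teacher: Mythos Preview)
Your proof is correct and follows essentially the same route as the paper's: pick $v_0\in V$ with $(s-t)v_0\neq 0$, define the module map $L\to V$ by $\ell\mapsto \ell v_0$, and observe that its kernel, being a submodule of $L$ missing the simple socle, must be zero. The paper compresses this into two sentences, but the argument is identical; your added remarks about essentiality of a simple socle (valid here since $\Bbbk S^\bullet$ is finite-dimensional in all applications) are a useful clarification rather than a departure.
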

\begin{proof}
As $s-t$ does not annihilate $V$, there is an element $v\in V$ such that $(s-t)v\neq 0$.
The module homomorphism $L\to \Bbbk Sv$ given by $a\mapsto av$ must be injective because it does not annihilate the simple socle of $L$ (which is generated by $s-t$).
\end{proof}

As a corollary we compute the effective dimension of a family of monoids including full transformation monoids.

\begin{corollary}\label{doublytrans}
Let $S\leq \mathcal{T}_{n}$ have a doubly transitive group of units $G$ and contain a singular transformation.  Then $\effdim_{\Bbbk}(S)=n$ for any field $\Bbbk$ whose characteristic does not divide $|G|$.  Moreover, the natural representation of $S$ is a submodule of all effective modules.
\end{corollary}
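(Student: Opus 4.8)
The plan is to apply Bergman's Lemma (Lemma~\ref{bergmanlemma}) to produce the natural representation inside every effective module, and then to invoke Corollary~\ref{idembound} (together with the obvious upper bound) to pin down the dimension exactly. The key point is to locate, inside $\Bbbk S$, a left ideal $L$ with simple socle isomorphic to the natural representation, whose socle contains an element of the form $s-t$ with $s,t\in S$ distinct; Bergman's Lemma then forces $L$, and hence the natural module, to embed in any module whose representation separates $s$ from $t$. Since an effective module must separate \emph{every} pair of distinct elements, it will in particular separate $s$ and $t$, and we will be done on the lower-bound/submodule side.

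First I would recall the representation theory of $S$ on $\Bbbk^{n}$: the natural permutation/transformation module $\Bbbk^{n}$ has $\Bbbk$ (the span of the all-ones vector, fixed by the doubly transitive $G$ and by any transformation) as a submodule, and since $\operatorname{char}\Bbbk\nmid |G|$ the augmentation submodule $V_{0}=\{(a_{1},\dots,a_{n}) : \sum a_{i}=0\}$ is a complement; double transitivity of $G$ makes $V_{0}$ a simple $\Bbbk G$-module, hence a simple $\Bbbk S$-module once we check that a singular (non-invertible) transformation does not act as zero on $V_{0}$ — which is immediate since a non-constant, non-injective map still moves some difference of basis vectors nontrivially. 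Next I would exhibit the left ideal: take a rank $n-1$ idempotent $f\in S$ (obtained by composing a singular transformation with units, using transitivity to arrange the image and the identified pair), and consider $L=\Bbbk S\,(e_{i}-e_{j})$ for basis vectors $e_{i},e_{j}$ spanning the kernel direction of $f$; equivalently work with the left ideal of $\Bbbk S$ generated by the corresponding matrix idempotent acting on $V_{0}$. I would argue that the socle of this $L$ is simple and isomorphic to $V_{0}$ — here is where the doubly transitive units are doing real work, as they guarantee $V_{0}$ is the only simple constituent of $\Bbbk^{n}$ apart from the trivial one and that $L$ cannot have the trivial module in its socle (the all-ones vector is not in $V_0$). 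Crucially, $L$'s socle contains the element $e_{i}-e_{j}$ viewed inside $\Bbbk S$ as $r-r'$ where $r,r'$ are two transformations of $S$ agreeing everywhere except on one point (again buildable from a singular map and units), giving the required element $s-t$ with $s\neq t$ in $S$.

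With that in hand, Bergman's Lemma applied to any effective module $V$ (whose representation separates the two transformations $s,t$ just constructed) yields an embedding of $L$, hence of its socle $V_{0}$, into $V$. To upgrade this to the \emph{natural} module $\Bbbk^{n}$: either repeat the argument with a second pair to get the trivial module too and use complete reducibility of $\Bbbk^n$, or more cleanly note that $V$ being effective is in particular effective for $G$, which is doubly transitive hence has effective dimension $\geq n$ over $\Bbbk$ (its only faithful-enough modules of small dimension are forced), so $\dim V\geq n$; combined with the chain of rank idempotents $1>f_{n-1}>\cdots$ of length $n-1$ in $S$ one already gets $\effdim_\Bbbk(S)\ge n-1$, and the extra separation constraint from the units pushes it to $n$. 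Finally the upper bound $\effdim_{\Bbbk}(S)\le n$ is witnessed by the natural representation itself (which is effective: units act faithfully by double transitivity, and distinct singular maps are distinguished pointwise), so $\effdim_\Bbbk(S)=n$; and since the natural module is simple-plus-trivial and every effective module must contain both constituents by the Bergman argument, the natural representation is a submodule of every effective module.

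The main obstacle I anticipate is the middle step: verifying that the left ideal $L$ one writes down genuinely has \emph{simple} socle equal to $V_{0}$ and that this socle contains an honest difference $s-t$ of semigroup elements rather than merely a difference of module elements. Getting $L$ to have simple socle may require passing to $\Bbbk S e$ for a suitably chosen idempotent $e$ and analyzing $e\Bbbk S e$, and ensuring $s-t\in \operatorname{soc}(L)$ will need a careful choice of two transformations in $S$ that differ in a single point and lie in the $\mathcal{R}$-class picked out by $e$; this is exactly where the hypotheses "doubly transitive units" and "contains a singular transformation" combine, and where the characteristic condition is used to split off the trivial summand so that $V_0$ is both simple and a submodule (not just a subquotient).
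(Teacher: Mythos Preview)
Your overall strategy---locate a left ideal of $\Bbbk S$ with simple socle containing a difference of two semigroup elements, then apply Bergman's Lemma---is exactly the paper's approach. But you are missing the one observation that makes everything work without a struggle: a submonoid of $\mathcal T_n$ with doubly transitive group of units and at least one singular transformation automatically contains \emph{all the constant maps} (this is classical; the paper cites~\cite{AS}). Let $C\subseteq S$ be the set of constant maps. Then $C$ is a left ideal of $S$, so $\Bbbk C$ is a left ideal of $\Bbbk S$, and the left action of $S$ on $\Bbbk C$ is visibly the natural $n$-dimensional representation (since $s\cdot c_i=c_{s(i)}$). The span of the differences $c_i-c_j$ is a submodule, simple by double transitivity of $G$ and the characteristic hypothesis; it is the unique proper submodule because the only other composition factor, the trivial module, is the top but not a submodule (a constant $c_k$ sends $\sum_i c_i$ to $n\,c_k$, not to itself). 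So $\Bbbk C$ has simple socle, and that socle contains $c_1-c_2$ with $c_1,c_2\in S$. Bergman's Lemma now embeds all of $\Bbbk C$---the natural module itself---into any effective module, giving both the lower bound $\effdim_\Bbbk(S)\ge n$ and the submodule assertion in one stroke; the natural action witnesses the upper bound.

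Your attempt to build $L$ around a rank-$(n-1)$ idempotent and a pair of transformations differing at a single point is the hard road: double transitivity does not in general supply transpositions, the ideal you sketch is not obviously isomorphic to the natural module, and its socle is not obviously simple---precisely the obstacles you flag yourself. One further correction: the claim that a doubly transitive permutation group has effective dimension at least $n$ is false ($\mathcal S_n$ itself has effective dimension $n-1$ in good characteristic), so that auxiliary route to the lower bound does not work either. The constant-map left ideal dissolves all of these difficulties at once.
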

\begin{proof}
It is well known that any submonoid of $\mathcal{T}_{n}$ with a doubly transitive group of units contains all the constant maps provided that it contains a singular transformation.  See for example~\cite{AS}.
Let $C$ be the set of constant maps.  It is a left ideal of $S$ and the action of $S$ on $\Bbbk C$ can be identified with the natural representation.  The subspace spanned by all differences $x-y$ with $x,y\in C$ is a submodule and is simple by double transitivity of $G$ on $C$ and classical group representation theory, cf.~\cite{F}.  Moreover, it is the unique proper submodule of $\Bbbk C$ since the only other composition factor of $\Bbbk C$ is the trivial module, which is the top but not a submodule.  An application of Lemma~\ref{bergmanlemma} completes the proof.
\end{proof}

The above corollary, in particular, applies to $\mathcal{T}_{n}$  itself and so $\mathcal{T}_{n}$ has effective dimension $n$ over any field of characteristic greater than $n$.

\section{Effective dimension of nilpotent semigroups}\label{s5}

In this section we study the effective dimension of nilpotent semigroups.  Recall that a semigroup $N$ is called \emph{nilpotent}  if it has a zero element $z$ such that $N^{n}=\{z\}$ for some $n$.  The minimum such $n$ is
termed the \emph{nilpotency index}  of $N$. The following lemma should be considered folklore.

\begin{lemma}\label{strictupper}
Every zero-preserving representation of a finite nilpotent semigroup $S$ is equivalent to one by strictly upper triangular matrices.
\end{lemma}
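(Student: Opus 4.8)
The plan is to filter the vector space by the images of the powers of the augmentation ideal and observe that $S$ shifts the filtration down by one. Let $\varphi\colon S\to \mathrm{End}_{\Bbbk}(V)$ be a zero-preserving linear representation, with $z$ the zero of $S$ acting as $0$ on $V$. Let $A$ be the span of $\varphi(S)$ inside $\mathrm{End}_{\Bbbk}(V)$; this is a (non-unital) subalgebra, and since $S^{n}=\{z\}$ we get $A^{n}=0$, i.e.\ $A$ is a nilpotent algebra of matrices. So the statement reduces to the classical fact that a nilpotent algebra of endomorphisms of a finite-dimensional vector space can be simultaneously strictly upper triangularized.

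The key steps I would carry out are as follows. First, define the descending chain $V=V_{0}\supseteq V_{1}\supseteq\cdots$ by $V_{i+1}=A V_{i}$ (equivalently $V_{i}=A^{i}V$, with $A^{0}V:=V$). Since $A^{n}=0$, we have $V_{n}=0$, so the chain terminates at $0$. Second, observe that each $V_{i}$ is $S$-invariant (it is even $A$-invariant by construction) and that $\varphi(s)V_{i}\subseteq V_{i+1}$ for every $s\in S$, because $\varphi(s)\in A$. Third, refine the chain $V=V_{0}\supseteq V_{1}\supseteq\cdots\supseteq V_{n}=0$ to a complete flag $V=W_{0}\supseteq W_{1}\supseteq\cdots\supseteq W_{d}=0$ with $\dim W_{j}/W_{j+1}=1$, chosen so that each $V_{i}$ appears as one of the $W_{j}$. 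Fourth, pick a basis of $V$ adapted to this flag (so $W_{j}$ is the span of the last $d-j$ basis vectors). In this basis every $\varphi(s)$ is upper triangular because it preserves the flag; it is moreover \emph{strictly} upper triangular because $\varphi(s)$ maps each step $V_{i}$ into the next step $V_{i+1}$, hence maps each $W_{j}$ into some $W_{j'}$ with $j'>j$ (as the $V_{i}$ are among the $W$'s), so in particular sends each basis vector into the span of strictly-later basis vectors. That gives the desired strictly upper triangular form.

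The one point requiring a little care — and the only thing I would call a potential obstacle — is the final diagonal-vanishing argument: one must make sure the refinement $W_{j}$ is compatible with the $V_{i}$'s so that ``$\varphi(s)V_{i}\subseteq V_{i+1}$'' actually forces a strict shift on the finer flag, rather than merely an upper-triangular (not strictly) shift. This is handled precisely by insisting every $V_{i}$ occur among the $W_{j}$: then for a basis vector $w$ lying in $V_{i}\setminus V_{i+1}$ we have $\varphi(s)w\in V_{i+1}$, and $V_{i+1}=W_{j}$ for some $j$ strictly past the index of $w$, so the $w$-diagonal entry of $\varphi(s)$ is $0$. Everything else is routine linear algebra, and the reduction to matrices is immediate once one notes a zero-preserving representation sends $S$ into a multiplicatively nilpotent set spanning a nilpotent algebra.
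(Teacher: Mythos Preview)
Your proof is correct and is essentially the paper's argument: both strictly upper-triangularize by choosing a basis adapted to a flag that $S$ shifts down one step. The paper phrases this via a composition series for $V$ as a $\Bbbk S^{1}/\Bbbk z$-module (each simple factor is annihilated by the codimension-one nilpotent ideal $\Bbbk S/\Bbbk z$, so the composition series is already a complete flag shifted down by $S$), while you build the flag directly from the filtration $A^{i}V$ and refine by hand; these are two packagings of the same idea.
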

\begin{proof}
Let $z$ be the zero of $S$.  Any representation of $S$ mapping $z$ to $0$ is naturally a $\Bbbk S^{1}/\Bbbk z$-module $V$.  As each simple $\Bbbk S^{1}/\Bbbk z$-module is annihilated by the codimension-one nilpotent ideal $\Bbbk S/\Bbbk z$, taking a basis for $V$ adapted to a composition series establishes the result.
\end{proof}

Write $ST_{n}(\Bbbk)$ for the semigroup of strictly upper triangular $n\times n$-matrices over $\Bbbk$.  It is a nilpotent semigroup of nilpotency index $n$.  From Lemma~\ref{strictupper}  one can deduce the following easy but
useful result:

\begin{corollary}\label{cornilp}
If a semigroup $S$ has an element satisfying $s^{n}=s^{n+1}$ but $s^{n-1}\ne s^{n}$, then
$\effdim_{\Bbbk}(S)\geq n$.
\end{corollary}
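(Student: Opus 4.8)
The plan is to show that every effective representation $\varphi\colon S\to\mathrm{End}_\Bbbk(V)$ satisfies $\dim V\ge n$; since some effective $\varphi$ realizes $\effdim_\Bbbk(S)$, this gives the bound. From $s^{n}=s^{n+1}$ we get $s^{m}=s^{n}$ for all $m\ge n$; in particular $s^{n}$ is idempotent and the monogenic subsemigroup $\langle s\rangle=\{s,s^{2},\dots,s^{n}\}$ has index $n$ and period $1$, the listed elements being distinct because $s^{n-1}\ne s^{n}$. The singleton $\{s^{n}\}$ is an ideal of $\langle s\rangle$, and the Rees quotient $N=\langle s\rangle/\{s^{n}\}$ is exactly the monogenic nilpotent semigroup of nilpotency index $n$. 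I will combine this with the idempotent $\varphi(s^{n})$ and Lemma~\ref{strictupper}.

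\textbf{Splitting off the trivial part.} First I would set $e=\varphi(s^{n})$, an idempotent operator on $V$, and use the decomposition $V=\ker e\oplus \mathrm{im}\,e$ (this is the Fitting decomposition for $\varphi(s)$, since $\varphi(s)^{n}$ is stable). Because $s^{n+1}=s^{n}$ gives $\varphi(s)e=e\varphi(s)=e$, the operator $\varphi(s)$ commutes with $e$ and hence preserves both summands; moreover on $\mathrm{im}\,e$ it acts as the identity (if $w=ev$ then $\varphi(s)w=\varphi(s)ev=ev=w$). So all the separation between $s,s^{2},\dots,s^{n}$ must take place on $V_{0}:=\ker e$. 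On $V_{0}$ the element $s^{n}$ acts as $0$, so restricting $\varphi|_{\langle s\rangle}$ to $V_{0}$ descends to a zero-preserving representation $\psi\colon N\to\mathrm{End}_\Bbbk(V_{0})$ with $\psi(\overline{s^{\,i}})=\varphi(s^{i})|_{V_{0}}$.

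\textbf{Applying Lemma~\ref{strictupper}.} Now I would invoke Lemma~\ref{strictupper}: $\psi$ is equivalent to a representation by strictly upper triangular $(\dim V_{0})\times(\dim V_{0})$ matrices, so $\psi(\overline s)^{\dim V_{0}}=0$. The crucial observation is that $\psi(\overline s)^{\,n-1}=\varphi(s^{n-1})|_{V_{0}}\ne 0$: otherwise $\varphi(s^{n-1})(1-e)=0$, which forces $\varphi(s^{n-1})=\varphi(s^{n-1})\varphi(s^{n})=\varphi(s^{2n-1})=\varphi(s^{n})$, contradicting injectivity of $\varphi$ since $s^{n-1}\ne s^{n}$. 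Hence the strictly upper triangular matrix representing $\psi(\overline s)$ has nonzero $(n-1)$st power, which is impossible unless $\dim V_{0}\ge n$; therefore $\dim V\ge \dim V_{0}\ge n$, as desired.

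\textbf{Main obstacle.} The only delicate point is the bookkeeping around the idempotent $s^{n}$: one must first isolate and discard the summand $\mathrm{im}\,e$ on which $s$ (and all its powers) act as the identity before Lemma~\ref{strictupper} can be applied, and then check that $s^{n-1}$ still acts nontrivially on the complementary summand $V_{0}$. Everything else is routine. (If one prefers to bypass Lemma~\ref{strictupper}, the same conclusion follows by a minimal-polynomial argument: the minimal polynomial of $\varphi(s)$ divides $x^{n}(x-1)$, so it equals $x^{a}(x-1)^{b}$ with $b\le 1$; if $\dim V\le n-1$ then $a+b\le n-1$, and in either case $a=0$ or $a\ge 1$ one deduces $\varphi(s)^{n-1}=\varphi(s)^{n}$, i.e.\ $\varphi(s^{n-1})=\varphi(s^{n})$, again contradicting effectiveness.)
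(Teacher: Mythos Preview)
Your proof is correct and follows exactly the route the paper intends: the paper states the corollary as a direct consequence of Lemma~\ref{strictupper} without spelling out the reduction, and your argument supplies precisely those details (splitting off the idempotent $e=\varphi(s^{n})$ so that Lemma~\ref{strictupper} applies to the nilpotent quotient acting on $\ker e$). Your parenthetical minimal-polynomial argument is also the same alternative the paper records immediately after the corollary.
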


For example, Corollary~\ref{cornilp} gives a lower bound $n$ for the
effective dimension of the Kiselman semigroup $K_n$ studied in \cite{KM}.
That this lower bound is in fact the exact value of the effective
dimension is proved in \cite{KM} by a subtle combinatorial argument.

An alternative way to prove Corollary~\ref{cornilp} is to observe that the minimal polynomial of $s$ has degree
$n$ and note that the minimal polynomial of any $k\times k$-matrix has degree at most $k$.

\subsection{Free nilpotent semigroups}\label{s5.1}

We assume that $\Bbbk$ is algebraically closed for this subsection so that we may apply results from algebraic geometry.  The reader is referred to~\cite[Chapter~1]{Ha} for basic notions from algebraic geometry.  The semigroup $ST_{n} (\Bbbk)$ is a linear algebraic semigroup, which moreover is an irreducible affine variety over $\Bbbk$.  In fact, it is isomorphic as a variety to affine $\binom{n}{2}$-space.   See the books of Putcha~\cite{Pu} and Renner~\cite{Re} for the theory of algebraic semigroups.

Let $N_{m,n}$ be the free $m$-generated nilpotent semigroup of nilpotency index (at most) $n$.  It is the quotient of the free semigroup by the ideal of words of length at least $n$ and hence is finite.  By Lemma~\ref{strictupper}, a representation of $N_{m,n}$ amounts to an $m$-tuple of elements of $ST_{n}(\Bbbk)$.  The space of such $m$-tuples is again an irreducible affine variety (called the \emph{representation variety} of $N_{m,n}$), this time isomorphic to affine $m\binom{n}{2}$-space.    We will show that the set of $m$-tuples corresponding to effective representations of $N_{m,n}$ is a non-empty Zariski open subset.  Thus generic representations of dimension $n$ are effective.  Since $N_{m,n}$ cannot be effectively represented in smaller degree, this will yield that $n$ is the effective dimension of $N_{m,n}$ over any algebraically closed field.

\begin{theorem}\label{genericallyfree}
Let $\Bbbk$ be an algebraically closed field.
The subset of $ST_{n}(\Bbbk)^{m}$ corresponding to effective representations of $N_{m,n}$ is a non-empty Zariski open subset and hence Zariski dense.
\end{theorem}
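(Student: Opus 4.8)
The strategy is to show that for each pair of distinct elements $u,v\in N_{m,n}$, the locus of $m$-tuples $(A_1,\dots,A_m)\in ST_n(\Bbbk)^m$ that fail to separate $u$ and $v$ is Zariski closed and proper; intersecting the complements over the finitely many pairs then gives a non-empty open set, which is automatically dense since $ST_n(\Bbbk)^m\cong \mathbb A^{m\binom n2}$ is irreducible. The failure locus for a fixed pair is clearly closed: writing $u$ and $v$ as words $w_u,w_v$ in the generators, the condition $w_u(A_1,\dots,A_m)=w_v(A_1,\dots,A_m)$ is the vanishing of finitely many polynomials in the matrix entries. So the whole content is in showing each of these closed sets is \emph{proper}, i.e. that for every pair $u\neq v$ there exists \emph{at least one} $m$-tuple of strictly upper triangular matrices separating them.

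For that existence statement it suffices to exhibit a single effective representation of $N_{m,n}$ of dimension $n$ --- and by Lemma~\ref{strictupper} any zero-preserving representation is automatically one by strictly upper triangular matrices, so an effective representation of dimension $n$ is literally a point of $ST_n(\Bbbk)^m$ lying outside every failure locus. Thus the theorem reduces to: $\effdim_{\Bbbk}(N_{m,n})\le n$. (The reverse inequality $\effdim_{\Bbbk}(N_{m,n})\ge n$ holds because $N_{m,n}$ contains the element given by a single generator $x$, which satisfies $x^{n}=z=x^{n+1}$ but $x^{n-1}\ne x^{n}$, so Corollary~\ref{cornilp} applies; this is noted in the surrounding text and is not needed for the present statement.) To build an effective $n$-dimensional representation, I would send the $i$-th generator to a strictly upper triangular matrix $A_i$ chosen so that the words of length $<n$ in the $A_i$ remain distinct. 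One clean way: work over the field of rational functions or simply pick the $A_i$ with algebraically independent entries in the strictly-upper-triangular positions --- equivalently, take the ``generic'' tuple itself, thought of as a tuple of matrices over $\Bbbk(\mathbf t)$ where $\mathbf t$ is the tuple of coordinate functions. Then a word $w(A_1,\dots,A_m)$ of length $\ell<n$ has, in its top-right-most nonzero superdiagonal entry (the one on the $\ell$-th superdiagonal coming from the ``staircase'' product), a polynomial in $\mathbf t$ that records $w$: e.g. using matrices supported on the first superdiagonal one already separates words of each fixed length, and allowing all superdiagonal entries one can arrange the product to encode the word uniquely. Hence distinct words of length $<n$ give distinct matrices, and all words of length $\ge n$ give the zero matrix $=$ the image of the zero of $N_{m,n}$; so the representation is injective.

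The main obstacle is precisely this last combinatorial bookkeeping: verifying that some explicit (or generic) choice of strictly upper triangular $A_1,\dots,A_m$ separates all pairs of distinct elements of $N_{m,n}$ simultaneously. The cleanest route is the generic-point argument: let $R=\Bbbk[\mathbf t]$ be the coordinate ring of $ST_n(\Bbbk)^m$ and let $\rho$ be the tautological representation over $\mathrm{Frac}(R)$; show $\rho$ is effective by the word-encoding computation above; then effectiveness of $\rho$ means that for each pair $u\neq v$ the polynomial(s) cutting out the $(u,v)$-failure locus are not all identically zero, i.e. that locus is a proper closed subset; and then the finite intersection of the (dense open) complements over all pairs $u\ne v$ is a non-empty Zariski open set --- and is automatically Zariski dense because the ambient variety $ST_n(\Bbbk)^m\cong\mathbb A^{m\binom n2}$ is irreducible. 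Once one knows effectiveness holds at the generic point, the specialization/density conclusion is immediate; so the work is entirely in that one word-separation verification, which is routine linear algebra with staircase products of nilpotent matrices.
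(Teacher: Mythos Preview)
Your framework is identical to the paper's: decompose the non-effective locus as the finite union $\bigcup_{u\neq v} V_{u,v}$ and use irreducibility of $ST_n(\Bbbk)^m\cong\mathbb{A}^{m\binom{n}{2}}$ to reduce to showing each $V_{u,v}$ is proper. The difference lies in that last step. The paper handles each pair \emph{separately} with a tailored $0/1$-matrix representation over $\Bbbk$: given $u = x_{1}\cdots x_{k}$ (arranging WLOG that $u\neq z$ and $|u|\le|v|$), it sends a generator $x$ to the matrix with a $1$ in slot $(i,i+1)$ precisely when $x=x_i$, and $0$ elsewhere; under this map only the contiguous factors of $u$ have nonzero image, so $\varphi(u)\neq 0 = \varphi(v)$. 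Your generic-point route is a valid alternative that treats all pairs simultaneously: the $(1,\ell+1)$ entry of a length-$\ell$ product is the monomial $(A_{i_1})_{12}(A_{i_2})_{23}\cdots(A_{i_\ell})_{\ell,\ell+1}$, which uniquely encodes the word, while words of different lengths are distinguished by which superdiagonal is the lowest nonzero one. The paper's approach buys explicitness (integer-entry matrices, no function-field detour, no appeal to the existence of $\Bbbk$-points in a nonempty open set); yours is more uniform and arguably more in the spirit of representation varieties, but you should actually write out that monomial computation rather than label it ``routine,'' since that one line is the entire content of the argument.
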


\begin{proof}
If $(A_{1},A_{2},\ldots,A_{m})\in ST(\Bbbk)^{m}$ and $u\in N_{m,n}$, then the image of $u$ under the representation corresponding to this $m$-tuple will be denoted $u(A_{1},A_{2},\ldots,A_{m})$.  For $u\ne v\in N_{m,n}$, let $V_{u,v}$ be the algebraic set defined by the polynomial equations  $u(A_{1},A_{2},\ldots,A_{m})=v(A_{1},A_{2},\ldots,A_{m})$.  Then the algebraic set of non-effective representations of $N_{m,n}$ is
\begin{equation}\label{genericallyfree.1}
V=\bigcup_{u\ne v\in N_{m,n}} V_{u,v}.
\end{equation}
Note that the union in \eqref{genericallyfree.1} is finite as $N_{m,n}$ is finite.  Thus, since $ST_{n}(\Bbbk)^{m}$ is irreducible, to prove our result it suffices to show that each $V_{u,v}$ with $u\ne v$ is proper.

Without loss of generality, we may assume that $u$ is non-zero and that if $v$ is a word, then $|u|\le |v|$.  Let $X$ be the free generating set for $N_{m,n}$ and assume that $u=x_{1}\cdots x_{k}$ with the $x_{i}\in X$ and $k<n$. Define a representation $\varphi\colon N_{m,n}\to ST_{n}(\Bbbk)$ on $x\in X$ by
\begin{equation*}
\varphi(x)_{ij} = \begin{cases} 1, & \text{if}\ x=x_{i},\ j=i+1;\\ 0, & \text{else.}\end{cases}
\end{equation*}
Clearly, $\varphi(x)\in ST_{n}(\Bbbk)$.  Observe that the only elements of $N_{m,n}$ that are not mapped to zero under $\varphi$ are the factors of $u$.  Indeed, if $w$ is a word of length less than $n$, then for $i<j$
\begin{displaymath}
\varphi(w)_{ij}= \begin{cases} 1, & \text{if}\ w=x_i\cdots x_{j-1}\\ 0, & \text{else.}\end{cases}
\end{displaymath}
In particular, $v$ maps to $0$ and $u$ does not.  Thus $V_{u,v}$ is a proper subset.  This completes the proof.
\end{proof}

For example, the elements of $ST_{n}(\mathbb{C})^{m}$ with strictly positive integer entries above the diagonal form a Zariski dense subset.  It follows from Theorem~\ref{genericallyfree} that there are effective $n$-dimensional representations of $N_{m,n}$ by strictly upper triangular matrices with positive integer entries above the diagonal.

\begin{corollary}\label{freenilp}
The effective dimension of the free nilpotent semigroup $N_{m,n}$ of index $n$ over an algebraically closed field is $n$.
\end{corollary}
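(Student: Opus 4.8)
The plan is to deduce Corollary~\ref{freenilp} directly from Theorem~\ref{genericallyfree} together with the lower bound coming from the nilpotency index. The point is that Theorem~\ref{genericallyfree} already provides the upper bound: since the subset of $ST_{n}(\Bbbk)^{m}$ corresponding to effective representations is non-empty, the semigroup $N_{m,n}$ admits an effective representation of dimension $n$ by strictly upper triangular $n\times n$-matrices, so $\effdim_{\Bbbk}(N_{m,n})\le n$.

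For the matching lower bound I would invoke the results of Section~\ref{s5}. The semigroup $N_{m,n}$ is nilpotent of nilpotency index exactly $n$, since by construction the free generator product $x_{1}x_{2}\cdots x_{n-1}$ is a non-zero element (a word of length $n-1<n$) while all products of length $n$ are the zero. Now take any generator $x\in X$; in $N_{m,n}$ we have $x^{n}=z=x^{n+1}$ but $x^{n-1}\ne z=x^{n}$, where $z$ is the zero. By Corollary~\ref{cornilp} this forces $\effdim_{\Bbbk}(N_{m,n})\ge n$. Alternatively one can cite Corollary~\ref{idembound} is not available here (there is only one idempotent, the zero), so the argument via Corollary~\ref{cornilp} — or equivalently the minimal polynomial remark following it — is the right tool: a strictly upper triangular $k\times k$ matrix that is non-nilpotent of index exactly $n$ cannot exist for $k<n$, since the minimal polynomial $x^{n}$ of such an element has degree $n$ and the minimal polynomial of a $k\times k$ matrix has degree at most $k$.

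Combining the two inequalities gives $\effdim_{\Bbbk}(N_{m,n})=n$ over any algebraically closed field $\Bbbk$, which is the assertion. There is essentially no obstacle here: the real work has already been done in Theorem~\ref{genericallyfree}, and the lower bound is a one-line application of an earlier corollary. The only thing to be slightly careful about is the degenerate cases — one should note that for $n=1$ the semigroup $N_{m,1}$ is just the trivial (one-element, hence zero) semigroup and both sides equal $1$ (recall $\effdim$ of the trivial monoid is $1$), and for $m=0$ the statement is vacuous or trivial; but these edge cases are consistent with the formula and need no separate treatment.
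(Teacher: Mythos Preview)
Your argument is correct and matches the paper's (implicit) proof: the corollary is stated there without proof, as an immediate consequence of Theorem~\ref{genericallyfree} for the upper bound together with the nilpotency-index lower bound (Corollary~\ref{cornilp}, or equivalently Lemma~\ref{strictupper}). One small correction to your edge-case remark: the effective dimension of the one-element monoid is $0$, not $1$ (cf.\ Proposition~\ref{rectband}\eqref{rectband.1}), so the formula actually fails for $n=1$ and the statement should be read for $n\ge 2$, which the paper tacitly assumes.
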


\subsection{Commutative nilpotent semigroups}\label{s5.2}
Our next theorem is the analogue of Theorem~\ref{genericallyfree} for free commutative nilpotent semigroups of index at most $n$.

\begin{theorem}\label{freecomm}  Let $CN_{m,n}$ be the free commutative nilpotent
semigroup on $m$ generators of nilpotency at most index $n$.  Then the
effective dimension of $CN_{m,n}$ is $n$ for any field $\Bbbk$ that is
either of characteristic $0$ or algebraically closed.
\end{theorem}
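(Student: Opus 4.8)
The plan is to establish the two inequalities separately; only the upper bound uses the hypothesis on $\Bbbk$. For the lower bound I would apply Corollary~\ref{cornilp} to the generator $s=x_1$ of $CN_{m,n}$: the word $x_1^{n-1}$ has length $n-1<n$ and so is non-zero in $CN_{m,n}$, while $x_1^{n}=x_1^{n+1}=z$. Thus $s^{n}=s^{n+1}$ but $s^{n-1}\neq s^{n}$, and Corollary~\ref{cornilp} gives $\effdim_{\Bbbk}(CN_{m,n})\geq n$ over an arbitrary field $\Bbbk$, with no further hypothesis.

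For the upper bound I would exhibit an explicit effective representation of dimension $n$; by Lemma~\ref{strictupper} it suffices to produce one by strictly upper triangular matrices. Let $J\in ST_{n}(\Bbbk)$ be the regular nilpotent Jordan block, $Je_i=e_{i+1}$ for $i<n$ and $Je_n=0$, so that $J,J^{2},\dots,J^{n-1}$ are non-zero, the matrices $J^{d}$ and $J^{d'}$ have their non-zero entries in disjoint positions whenever $d\neq d'$ with $d,d'\leq n-1$, and $J^{n}=0$. Choose scalars $c_1,\dots,c_m\in\Bbbk^{\times}$, to be specified below, and define $\varphi\colon CN_{m,n}\to ST_{n}(\Bbbk)$ on generators by $\varphi(x_i)=c_iJ$. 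All $\varphi(x_i)$ lie in $\Bbbk[J]$ and hence commute, and every word of length $\geq n$ is sent to a scalar multiple of $J^{n}=0$, so $\varphi$ is a well-defined representation. A non-zero element of $CN_{m,n}$ is a monomial $x_1^{a_1}\cdots x_m^{a_m}$ with $1\leq |a|\leq n-1$ (writing $|a|=\sum_i a_i$), and $\varphi$ sends it to $\bigl(\prod_i c_i^{a_i}\bigr)J^{|a|}$, a non-zero matrix; in particular $\varphi$ kills only the zero of $CN_{m,n}$. By the disjointness of supports of the powers of $J$, two non-zero monomials $x^{a}$ and $x^{b}$ have equal image only if $|a|=|b|$ and $\prod_i c_i^{a_i}=\prod_i c_i^{b_i}$. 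Hence $\varphi$ is effective provided the $c_i$ are chosen so that: whenever $\prod_i c_i^{e_i}=1$ with $\sum_i e_i=0$ and $|e_i|\leq n-1$ for all $i$, then $e_i=0$ for all $i$ (apply this with $e_i=a_i-b_i$, noting $|e_i|\leq\max(a_i,b_i)\leq|a|\leq n-1$).

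It remains to make the choice of scalars, which is the only place the hypothesis on $\Bbbk$ enters. If $\mathrm{char}\,\Bbbk=0$, then $\mathbb{Q}\subseteq\Bbbk$ and I would take $c_i$ to be the $i$-th prime number; a relation $\prod_i c_i^{e_i}=1$ in $\mathbb{Q}^{\times}$ forces all $e_i=0$ by unique factorization, with no size restriction needed at all. If $\Bbbk$ is algebraically closed of characteristic $p>0$, then $\Bbbk$ may fail to contain elements of infinite multiplicative order (for instance $\overline{\mathbb{F}_p}^{\times}$ is torsion), so I would instead fix distinct primes $\ell_1,\dots,\ell_m$, all different from $p$ and all greater than $n-1$, and let $c_i\in\Bbbk$ be a primitive $\ell_i$-th root of unity (these exist as $\Bbbk$ is algebraically closed of characteristic $\neq\ell_i$). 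The subgroups $\mu_{\ell_1},\dots,\mu_{\ell_m}$ of $\Bbbk^{\times}$ have pairwise coprime orders, hence generate their internal direct product, so $\prod_i c_i^{e_i}=1$ forces $c_i^{e_i}=1$, i.e.\ $\ell_i\mid e_i$, for each $i$; since $|e_i|\leq n-1<\ell_i$ this gives $e_i=0$. In either case $\varphi$ is an effective $n$-dimensional representation, so $\effdim_{\Bbbk}(CN_{m,n})\leq n$, which together with the lower bound proves the theorem.

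I do not expect a genuine obstacle; the only point requiring care is the positive-characteristic case, where "multiplicative independence'' of the $c_i$ must be traded for "roots of unity of large, pairwise coprime orders.'' As an alternative to arguing directly, one could use Theorem~\ref{modelthm}\eqref{modelthm.1} to reduce the algebraically closed case to one field of each characteristic and run the root-of-unity construction inside $\overline{\mathbb{F}_p}$. A genericity argument in the spirit of Theorem~\ref{genericallyfree} seems less convenient here, because the relevant representation variety of $CN_{m,n}$ — namely tuples of pairwise commuting strictly upper triangular matrices — need not be irreducible, so there is no single generic point to exploit; the explicit construction above sidesteps this issue entirely.
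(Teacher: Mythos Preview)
Your proof is correct. The lower bound and the characteristic-zero construction coincide with the paper's (primes times the nilpotent Jordan block $J$). For the algebraically closed case you diverge: you give an explicit effective representation by choosing the $c_i$ to be primitive $\ell_i$-th roots of unity for distinct primes $\ell_i>n-1$ (with $\ell_i\neq p$), so that the only relation $\prod c_i^{e_i}=1$ with $|e_i|\leq n-1$ is the trivial one. The paper instead runs a genericity argument, but not on the full commuting variety you warn against: it parametrizes only the representations $x_i\mapsto a_iJ$ by $(a_1,\dots,a_m)\in\Bbbk^m$, which is irreducible, and shows that each ``bad'' locus $V_{u,v}$ is proper (for $u\neq v$ non-zero, pick a coordinate $j$ where the exponents differ and take $a_j$ of multiplicative order $\geq n$, all other $a_i=1$). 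So your worry about irreducibility of the representation variety is sidestepped in the paper by working on this one-dimensional slice through $J$; your explicit construction buys you a concrete effective module rather than mere existence, while the paper's argument fits the running theme of Zariski-open effectiveness.
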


\begin{proof}
The lower bound is clear since $CN_{m,n}$ has nilpotent elements of
index $n$.  For the converse, let $J$ be the $n\times n$ nilpotent
Jordan block.  Assume first that $\Bbbk$ has characteristic $0$.  Let
$p_1,\ldots, p_m$ be distinct prime positive integers.
Since the primes generate a
free commutative semigroup, trivially the semigroup generated by
$p_1J,\ldots, p_mJ$ is isomorphic to $CN_{m,n}$.

Next assume that $\Bbbk$ is algebraically closed.   Let $x_1,\ldots, x_m$ be
the free generating set for $CN_{m,n}$. To each element $(a_1,\ldots,
a_m)$ of $\Bbbk^m$ we associate the representation of $CN_{m,n}$
sending the $x_i$ to $a_iJ$, in this way making $\Bbbk^m$ a
representation variety.  We claim that the set of $m$-tuples
corresponding to effective representations of $CN_{m,n}$ is a
non-empty Zariski open subset of this affine space. Indeed, if $u,v\in CN_{m,n}$ are distinct, then the set $V_{u,v}$  of
representations from this representation variety that do not separate $u,v$ is an algebraic set.  The set of non-effective representations is the union of the algebraic sets $V_{u,v}$ running over the finitely many pairs of distinct elements $u,v\in CN_{m,n}$.  Since $\Bbbk^{m}$ is irreducible, it suffices to show that each $V_{u,v}$ is proper.  If one of $u,v$ is $0$, then the
representation associated to $(1,\ldots,1)$ does the job.  If neither
is zero, then there is a generator $x_j$ that appears a different
number of times in $u$ than in $v$.  Choose an element $a\in
\Bbbk^{\times}$ of order at least $n$.  Then the representation
corresponding to the $m$-tuple with $a_j=a$ and all other $a_i=1$
separates $u$ and $v$.
\end{proof}

In light of Theorems~\ref{genericallyfree} and \ref{freecomm}
one might think that any nilpotent semigroup of index $n$ has an effective representation of dimension $n$.  If $n\le 2$, this is true since all nilpotent semigroups of these indices are free of their index.  For $n>2$, this is false, as we proceed to show.
Our main tool is the following technical lemma.

\begin{lemma}\label{partinj}
Suppose that $N$ is a finite nilpotent semigroup with zero element $z$ such that:
\begin{enumerate}[$($i$)$]
\item\label{partinj.1} $N$ acts by partial injective maps on the left of $N\setminus \{z\}$;
\item\label{partinj.2} there is a unique element $w\in N\setminus \{z\}$ with $Nw=\{z\}$.
\end{enumerate}
Then every effective module  contains $\Bbbk N^{1}(1-z)\cong \Bbbk N^{1}/\Bbbk z$ as a submodule, whence  $\effdim_{\Bbbk}(N)=|N|$.
\end{lemma}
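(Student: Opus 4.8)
The plan is to prove the submodule claim using Bergman's Lemma (Lemma~\ref{bergmanlemma}) and then to read off $\effdim_\Bbbk(N)=|N|$ by combining it with an easy upper bound. For the upper bound, note that $z$, being the zero of $N$, is a central idempotent of $\Bbbk N^1$; hence $1-z$ is a central idempotent, $\Bbbk N^1=\Bbbk z\oplus\Bbbk N^1(1-z)$, and the left ideal $L:=\Bbbk N^1(1-z)$ of $\Bbbk N^1$ is isomorphic to $\Bbbk N^1/\Bbbk z$. Since $N$ is nilpotent and (by \eqref{partinj.2}) nontrivial, it is not a monoid, so $N^1=N\sqcup\{1\}$ and $\dim L=|N|$. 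Moreover $\Bbbk N^1/\Bbbk z$ is itself an effective $N$-module: the action of $s\in N$ on the class of $1$ is its class $\bar s$, and distinct elements of $N$ have distinct classes in $\Bbbk N^1/\Bbbk z$. Thus $\effdim_\Bbbk(N)\le|N|$, and it remains to prove that every effective $N$-module contains $L$ as a submodule.

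The crux is to compute the socle of $L$ as a $\Bbbk N^1$-module. Write $B=\Bbbk N^1/\Bbbk z=\Bbbk\bar 1\oplus J$, where $J=\Bbbk N/\Bbbk z$ is a two-sided ideal, nilpotent because $N$ is, with $B/J\cong\Bbbk$; thus $B$ is local with radical $J$, and $\operatorname{soc}({}_{B}B)$, the largest submodule annihilated by $J$, equals $\{x\in B:Jx=0\}$, which lies in $J$ because any $x\notin J$ is a unit of the local algebra $B$ and so cannot annihilate the nonzero ideal $J$ (nonzero since \eqref{partinj.2} forces $|N|\ge 2$). Now take $x=\sum_{s\in N\setminus\{z\}}\mu_s\bar s\in J$ and $t\in N\setminus\{z\}$. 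Then $\bar t\,x=\sum_{s\in N\setminus\{z\},\,ts\ne z}\mu_s\,\overline{ts}$, and by hypothesis~\eqref{partinj.1} the elements $\overline{ts}$, as $s$ runs over those $s\in N\setminus\{z\}$ with $ts\ne z$, are pairwise distinct basis vectors of $J$; hence $\bar t\,x=0$ already forces $\mu_s=0$ for every $s$ with $ts\ne z$. Consequently $Jx=0$ forces $\mu_s\ne 0\Rightarrow Ns=\{z\}$, which by \eqref{partinj.2} means $s=w$, so $x\in\Bbbk\bar w$; conversely $J\bar w=0$ because $Nw=\{z\}$. Therefore $\operatorname{soc}({}_{B}B)=\Bbbk\bar w$ is one-dimensional, in particular simple. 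Since $\Bbbk z$ acts as zero on $L$, the $\Bbbk N^1$-module structure on $L$ factors through $B$; transporting the socle along the isomorphism $B\to L$, $\bar a\mapsto a(1-z)$, we conclude that $L$ is a left ideal of $\Bbbk N^1$ with simple socle $\Bbbk(w-z)$, a line spanned by the nonzero element $w-z$ with $w,z\in N$.

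Finally, let $V$ be an arbitrary effective $N$-module, regarded as a $\Bbbk N^1$-module with $1$ acting as the identity; since $V$ is effective and $w\ne z$, its associated representation separates $w$ and $z$. Applying Bergman's Lemma (Lemma~\ref{bergmanlemma}) with $S=N^1$ and the left ideal $L$ then shows that $V$ contains $L\cong\Bbbk N^1(1-z)\cong\Bbbk N^1/\Bbbk z$ as a submodule; in particular $\dim V\ge|N|$, and together with the upper bound this gives $\effdim_\Bbbk(N)=|N|$. I expect the socle computation to be the only real obstacle: that is precisely where the hypotheses enter, partial injectivity \eqref{partinj.1} ruling out cancellation among the vectors $\overline{ts}$ and the uniqueness in \eqref{partinj.2} collapsing the socle to a single line, whereas the upper bound and the concluding application of Bergman's Lemma are routine.
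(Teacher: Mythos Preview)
Your proof is correct and follows essentially the same approach as the paper: both compute that the left ideal $L=\Bbbk N^{1}(1-z)$ has simple socle $\Bbbk(w-z)$ and then invoke Bergman's Lemma. Your organization is marginally cleaner in that you work directly in the local quotient algebra $B=\Bbbk N^{1}/\Bbbk z$ and characterize the socle as the annihilator of the radical $J$, whereas the paper classifies simple submodules of $\Bbbk N^{1}$ and, for each $s\notin\{w,z\}$, explicitly produces $u\in N$ with $us=w$ to extract the coefficient $c_{s}$; but these are two ways of packaging the same computation, and both rest on exactly the same use of hypotheses \eqref{partinj.1} and \eqref{partinj.2}.
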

\begin{proof}
We claim that $\Bbbk N^{1}(1-z)$ has simple socle spanned by $w-z$.   The result will then follow from  Lemma~\ref{bergmanlemma} applied to the monoid $N^{1}$. Note that \eqref{partinj.2} implies that $w-z$ generates the simple $\Bbbk N^{1}$-module that is annihilated by $N$.   Because $\Bbbk N^{1}=\Bbbk N^{1}(1-z)\oplus \Bbbk z$, it suffices to show that the only simple submodules of $\Bbbk N^{1}$ are $\Bbbk z$ and $\Bbbk(w-z)$.

So suppose that
\begin{equation*}
\alpha=\sum_{s\in N^{1}}c_{s}s,
\end{equation*}
with the $c_s\in\Bbbk$,
generates a simple submodule. If it generates the trivial module, then from $z\alpha=\alpha$, we see that $\alpha\in \Bbbk z$.  Otherwise, $N\alpha=0$.  Suppose that $s\notin \{w,z\}$.  Let $k$ be the nilpotency index of $N$.  Then since $N^{k}s=\{z\}$, there exists $u\in N$ such that $us=w$ by \eqref{partinj.2}.  As $N$ acts on $N^{1}\setminus \{z\}$ by partial injective maps, it follows that the coefficient of $w$ in $0=u\alpha$ is $c_{s}$.  Thus $c_{s}=0$ for $s\notin \{w,z\}$.  On the other hand, $0=z\alpha=c_{w}z+c_{z}z$.  Thus $\alpha\in \Bbbk(w-z)$, as required.  This completes the claim.
\end{proof}

Let $NC_{m}$ be the free semigroup with zero on $m$ generators satisfying the identities $xy=yx$ and $x^{2}=0$.  It is not difficult to see that if $X$ is an $m$-element set, then $NC_{m}$ can be identified with the non-empty subsets of $X$ together with a zero element $z$. In particular, $|NC_{m}|=2^{m}$.       If $A,B$ are non-empty subsets of $X$, then
\begin{equation*}
AB=\begin{cases} A\cup B, & \text{if}\ A\cap B=\emptyset;\\ z, & \text{else.}\end{cases}
\end{equation*}
In particular, $NC_{m}$ is nilpotent of index $m+1$.    From this description, it is clear that $NC_{m}$ satisfies the hypotheses of Lemma~\ref{partinj} with $w=X$.  Thus we have the following result.

\begin{proposition}
The semigroup $NC_{m}$ has nilpotency index $m+1$ and effective dimension $2^{m}$ (over any field).
\end{proposition}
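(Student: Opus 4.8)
The plan is to deduce the statement directly from Lemma~\ref{partinj} applied to the monoid $N^{1}$ with $N = NC_{m}$; essentially all that is needed is to confirm the concrete description of $NC_{m}$ and then check the two hypotheses of that lemma.

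First I would nail down the combinatorial model: with $X$ an $m$-element set, $NC_{m}$ is identified with the non-empty subsets of $X$ together with a zero $z$, the product being union when the sets are disjoint and $z$ otherwise. This follows because commutativity and $x^{2}=0$ force a non-zero product of generators to record only \emph{which} generators occur and to vanish as soon as one occurs twice. From this model I would read off $|NC_{m}|=2^{m}$ and compute the nilpotency index: a non-zero product $A_{1}\cdots A_{k}$ is a union of pairwise disjoint non-empty subsets of $X$, so has at least $k$ elements; hence every product of $m+1$ elements of $N$ equals $z$, while the product of the $m$ singletons equals $X\neq z$, giving nilpotency index exactly $m+1$.

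Next I would verify the hypotheses of Lemma~\ref{partinj}. For \eqref{partinj.1}, fix $A\in N\setminus\{z\}$; the left translation by $A$ sends $B$ to $A\cup B$ precisely when $A\cap B=\emptyset$, and this partial map is injective since $B=(A\cup B)\setminus A$ is recovered from its image. For \eqref{partinj.2}, the unique candidate is $w=X$: every non-empty subset of $X$ meets $X$, so $sX=z$ for all $s\in N$, i.e.\ $NX=\{z\}$; conversely, if $A\neq X$, pick $x\in X\setminus A$, and then $\{x\}\cdot A=\{x\}\cup A\neq z$, so no proper subset is annihilated on the left by all of $N$. Thus $w=X$ is the unique element with $Nw=\{z\}$.

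With both hypotheses established, Lemma~\ref{partinj} yields that every effective module contains $\Bbbk N^{1}(1-z)\cong \Bbbk N^{1}/\Bbbk z$ as a submodule, whence $\effdim_{\Bbbk}(NC_{m})=|NC_{m}|=2^{m}$ over any field, completing the proof. I do not expect a genuine obstacle here; the only step needing a little care is justifying the subset-semigroup presentation of $NC_{m}$ (and in particular that the stated multiplication rule is exactly the one forced by the defining identities), after which the verification of the two conditions in Lemma~\ref{partinj} is entirely routine.
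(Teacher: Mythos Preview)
Your proposal is correct and follows exactly the paper's approach: the paper simply asserts that the subset description of $NC_{m}$ makes it clear that the hypotheses of Lemma~\ref{partinj} hold with $w=X$, and you have filled in precisely those verifications (partial injectivity of left translations, uniqueness of $w=X$, and the nilpotency index computation). There is nothing to add.
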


This example can be bootstrapped as follows. Let $A_{1},\ldots, A_{k}$ be disjoint finite sets of cardinalities $m_{1},\ldots,m_{k}$.  Let $S$ consist of all proper non-empty subsets of the $A_{i}$, for $1\le i \le k$, together with elements $z,w$.  Define a binary operation on  $S$ by making $z$ a zero element, by putting $Sw=wS=\{z\}$ and setting, for $\emptyset\ne X\subsetneq A_{i}$ and $\emptyset\ne Y\subsetneq A_{j}$,
\begin{equation*}
XY=\begin{cases}X\cup Y, & \text{if}\ i=j,\ X\cap Y=\emptyset\ \text{and}\ X\cup Y\subsetneq A_{i}; \\ w, & \text{if}\ i=j,\ X\cap Y=\emptyset\ \text{and}\ X\cup Y= A_{i};\\ z, & \text{else.} \end{cases}
\end{equation*}
Then $S$ is a commutative nilpotent semigroup of nilpotency index $\max\{m_{i}:1\le i\le k\}+1$ and size
\begin{equation}\label{bigdim}
2-2k+\sum^{k}_{i=1}2^{m_{i}}
\end{equation}
satisfying the hypotheses of Lemma~\ref{partinj}.  Thus $S$ has effective dimension given by \eqref{bigdim}.  We therefore have the following result.

\begin{proposition}
For any positive integer $n>2$, there are arbitrarily large commutative nilpotent semigroups of nilpotency index $n$ with effective dimension equal to cardinality.
\end{proposition}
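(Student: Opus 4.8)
The plan is to apply the bootstrapped construction introduced in the paragraph immediately preceding the statement, specialized so that the nilpotency index is pinned to $n$ while the cardinality is driven to infinity by a free parameter. Fix $n>2$ and a positive integer $k$, and take the disjoint finite sets $A_{1},\dots,A_{k}$ all of the same cardinality $m_{1}=\cdots=m_{k}=n-1$; note that the hypothesis $n>2$ is exactly what guarantees $n-1\ge 2$, so that each $A_{i}$ possesses proper non‑empty subsets and actually contributes to the construction. Let $S=S(n,k)$ be the commutative nilpotent semigroup built from this data. By the properties recorded when the construction was set up, $S(n,k)$ has nilpotency index $\max\{m_{i}:1\le i\le k\}+1=n$ and satisfies the hypotheses of Lemma~\ref{partinj}, whence $\effdim_{\Bbbk}(S(n,k))=|S(n,k)|$ over every field $\Bbbk$.

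It then remains only to check that $|S(n,k)|\to\infty$ as $k\to\infty$. By the cardinality formula~\eqref{bigdim},
\[
|S(n,k)|=2-2k+\sum_{i=1}^{k}2^{m_{i}}=2-2k+k\cdot 2^{n-1}=2+k\bigl(2^{n-1}-2\bigr),
\]
and since $n>2$ forces $2^{n-1}-2\ge 2>0$, this quantity is strictly increasing and unbounded in $k$. Letting $k$ run over the positive integers therefore produces, for each fixed $n>2$, arbitrarily large commutative nilpotent semigroups of nilpotency index $n$ whose effective dimension equals their cardinality, which is the assertion.

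The only content beyond this bookkeeping lies in verifying that $S(n,k)$ genuinely meets conditions \eqref{partinj.1} and \eqref{partinj.2} of Lemma~\ref{partinj}. For \eqref{partinj.1} one observes that for a fixed non‑zero $Y$ the partial map $X\mapsto YX$ on the non‑zero elements is reconstructible from its value: when $YX$ is a proper subset it equals $(YX)\setminus Y$, and when $YX=w$ it equals $A_{i}\setminus Y$, so the map is injective wherever defined. For \eqref{partinj.2} one notes that any proper non‑empty $X\subsetneq A_{i}$ admits a singleton $\{a\}\subseteq A_{i}\setminus X$ with $\{a\}X\ne z$, so $w$ is the unique non‑zero element annihilated on the left by all of $S$. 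I do not expect a genuine obstacle here: the substance of the argument was front‑loaded into Bergman's Lemma~\ref{bergmanlemma} and Lemma~\ref{partinj}, and the present proposition is essentially a counting corollary of those together with formula~\eqref{bigdim}.
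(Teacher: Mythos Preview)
Your proof is correct and follows essentially the same approach as the paper, which leaves the proposition as an immediate corollary of the bootstrapped construction and formula~\eqref{bigdim}; you have simply made explicit the parameter choice $m_{1}=\cdots=m_{k}=n-1$ with $k\to\infty$ and supplied the verification of the hypotheses of Lemma~\ref{partinj} that the paper only asserts.
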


Recall from Theorem~\ref{geneffdim} that for a semigroup that is not a monoid, the cardinality is an upper bound on effective dimension.  Thus the above result is the best possible.

\section{Effective dimension of path semigroups}\label{spath}

\subsection{Acyclic path semigroups}\label{spath.1}

Let $Q$ be a  quiver (equals directed graph).  The \emph{path semigroup} $P(Q)$ of $Q$ is the semigroup consisting of all the (directed) paths in $Q$, including an empty path $\varepsilon_x$ at each vertex $x$, together with a zero element $z$.
The product of two paths is given by concatenation, when it makes sense, and by $z$, otherwise. The semigroup $P(Q)$ is finite if and only if $Q$ is finite
and acyclic, which we assume in this subsection. The \emph{path algebra} $\Bbbk Q$ of the quiver $Q$ is the algebra $\Bbbk P(Q)/\Bbbk z$.  This algebra is unital with identity the sum of the empty paths.  In this section, we compute the effective dimension of $P(Q)$, under the assumption that $\Bbbk$ is algebraically closed, via the study of representation varieties.

Let us write $Q_{0}$ and $Q_{1}$ for the vertex and edge sets of $Q$, respectively.   We shall write $s(e)$ for the source of an edge and $t(e)$ for the target.  We shall also use this notation for paths.  We recall some basic notions from the representation theory of quivers, see~\cite{B} for details.  A \emph{representation} $\rho$ of a quiver $Q$ is an assignment of a (finite dimensional) $\Bbbk$-vector space $V_{x}$ to each vertex $x\in Q_{0}$ and a linear transformation $\rho_{e}\colon V_{t(e)}\to V_{s(e)}$ to each $e\in Q_{1}$.  Let $n_{x}=\dim V_{x}$.  Then $\boldsymbol n_{\rho}:=(n_{x})_{x\in Q_{0}}$ is termed the \emph{dimension vector} of $\rho$.  If $p=e_{1}\cdots e_{m}$ is a path (perhaps empty), then let $\rho_{p}\colon V_{t(p)}\to V_{s(p)}$ be the composition $\rho_{e_{1}}\cdots \rho_{e_{m}}$.

 The categories of representations of the quiver $Q$ and of $\Bbbk Q$-modules are equivalent.  The $\Bbbk Q$-module associated to a representation $\rho$ of $Q$ has underlying vector space
\begin{equation*}
V_{\rho}:=\bigoplus_{x\in Q_{0}} V_{x}.
\end{equation*}
The action of a path $p$ is obtained by extending $\rho_{p}$ to be $0$ on the summands other than $V_{t(p)}$.  It follows that $V_{\rho}$ is effective if and only if, for any two coterminous paths $p\ne q$, one has $\rho_{p}\ne \rho_{q}$ and also $\rho_{p}\ne 0$ for all paths $p$. Note that
\begin{equation*}
\dim V_{\rho}=\sum_{x\in Q_{0}}n_{x}.
\end{equation*}
Conversely, if $V$ is a $\Bbbk Q$-module, then a representation $\rho$ of $Q$ is obtained by setting $V_x=\varepsilon_xV$ for $x\in Q_0$ and $\rho_e=\varphi(e)|_{V_{t(e)}}$ where $\varphi\colon P(Q)\to \mathrm{End}_k(V)$ is the associated representation.

If $\boldsymbol n=(n_{x})_{x\in Q_{0}}$ is a dimension vector, the corresponding \emph{representation variety} is
\begin{equation*}
\mathcal{V}(\boldsymbol n):=\prod_{e\in Q_{1}}\mathrm{Mat}_{n_{s(e)}\times n_{t(e)}}(\Bbbk)
\end{equation*}
which is an irreducible affine variety isomorphic to an affine space of dimension $\sum_{e\in Q_{1}} n_{s(e)}n_{t(e)}$.  It is clear that elements of $\mathcal{V}(\boldsymbol n)$ are in bijection with quiver representations sending $x\in Q_{0}$ to $\Bbbk^{n_{x}}$.

We are now ready to compute the effective dimension of the path semigroup of an acyclic quiver.

\begin{theorem}\label{thmpath}
Let $Q$ be a finite acyclic quiver with n vertices. Then $\effdim_{\Bbbk}(P(Q))=n$ for any algebraically closed field $\Bbbk$. More precisely, if $\boldsymbol 1$ denotes the dimension vector with $n_{x}=1$ for all $x\in Q_{0}$, then the effective representations in the representation variety $\mathcal{V}(\boldsymbol 1)$ form a non-empty Zariski open subset.
\end{theorem}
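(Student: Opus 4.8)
The plan is to prove the sharper ``more precisely'' clause about the representation variety $\mathcal{V}(\boldsymbol 1)$ first; the equality $\effdim_{\Bbbk}(P(Q))=n$ then follows by combining it with a short lower bound argument. The lower bound is as follows. Let $V$ be an \emph{arbitrary} effective $P(Q)$-module. Since $P(Q)$ has a zero element $z$, the module $V/zV$ is again effective (by the argument in the proof of Proposition~\ref{elemfacts}\eqref{elemfacts.2}), so we may assume $z$ acts as $0$. The empty paths $\varepsilon_x$ ($x\in Q_0$) are idempotents of $P(Q)$ with $\varepsilon_x\varepsilon_y=z$ for $x\ne y$; hence their images $e_x\in\mathrm{End}_{\Bbbk}(V)$ satisfy $e_x^2=e_x$ and $e_xe_y=0$ for $x\ne y$, and each $e_x\ne 0$ because $\varepsilon_x\ne z$ and $V$ is effective. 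Pairwise orthogonal nonzero idempotents have independent images, so $\dim V\ge|Q_0|=n$, giving $\effdim_{\Bbbk}(P(Q))\ge n$.

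Next, I would show the effective locus is Zariski open. By the dictionary recalled above, a point $\rho\in\mathcal{V}(\boldsymbol 1)$ is a choice of scalar $\rho_e\in\Bbbk$ for each edge $e$, a path $p=e_1\cdots e_m$ acts on $V_{\rho}$ through the scalar $\rho_p=\rho_{e_1}\cdots\rho_{e_m}$ (with $\rho_{\varepsilon_x}=1$), and $V_{\rho}$ is effective exactly when $\rho_p\ne 0$ for every path $p$ and $\rho_p\ne\rho_q$ for every pair of distinct coterminous paths $p\ne q$. Each such condition is the non-vanishing of a single polynomial, namely $\rho_p$ or $\rho_p-\rho_q$, in the coordinate ring $\Bbbk[\rho_e:e\in Q_1]$; and since $Q$ is finite and acyclic, $P(Q)$ is finite, so there are only finitely many conditions. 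Hence the effective locus $U$ is a finite intersection of principal open sets, in particular Zariski open.

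For non-emptiness: as $\mathcal{V}(\boldsymbol 1)$ is an irreducible affine space over the infinite field $\Bbbk$, the open set $U$ is nonempty as soon as none of these polynomials vanishes identically. Acyclicity of $Q$ implies that no edge occurs twice in a path, so $\rho_p$ is a squarefree monomial in the $\rho_e$ (in particular nonzero), and moreover the set of edges occurring in a path determines the path uniquely, so for $p\ne q$ the monomials $\rho_p$ and $\rho_q$ are distinct and $\rho_p-\rho_q$ is a nonzero polynomial. Hence $U$ is a nonempty Zariski open, and therefore dense, subset of $\mathcal{V}(\boldsymbol 1)$. Any $\rho\in U$ gives an effective module $V_{\rho}$ with $\dim V_{\rho}=\sum_{x\in Q_0}1=n$, so $\effdim_{\Bbbk}(P(Q))\le n$; together with the lower bound this yields $\effdim_{\Bbbk}(P(Q))=n$.

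I expect the openness and the dimension count to be routine; the real point is the non-emptiness, where the hypotheses enter twice — acyclicity makes $P(Q)$ finite (so only finitely many hypersurfaces are removed) and makes distinct paths give distinct monomials $\rho_p$ (so every removed hypersurface is proper). Algebraic closedness of $\Bbbk$ is used only for the equivalence between quiver representations and $\Bbbk Q$-modules and the usual correspondence between representations and points of $\mathcal{V}(\boldsymbol 1)$; any infinite field would suffice for the geometric step.
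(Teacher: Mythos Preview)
Your proof is correct and follows the same overall strategy as the paper's: identify the non-effective locus in $\mathcal{V}(\boldsymbol 1)$ as a finite union of proper Zariski-closed subsets of an irreducible variety, using that in an acyclic quiver a path is determined by its set of edges. The differences are only in execution. For the lower bound, the paper invokes Corollary~\ref{latticecase} on the semilattice $\{\varepsilon_x:x\in Q_0\}\cup\{z\}$, whereas you argue directly that the $\varepsilon_x$ act as pairwise orthogonal nonzero idempotents; your route is more self-contained. For non-emptiness, the paper exhibits for each bad condition an explicit witness (all $\rho_e=1$ to avoid $V_p$, and one $\rho_e=0$ with the rest $1$ to avoid $V_{p,q}$), while you argue abstractly that $\rho_p$ and $\rho_p-\rho_q$ are nonzero polynomials because distinct paths give distinct squarefree monomials; the paper's witnesses are more concrete, your formulation makes the role of acyclicity slightly more transparent. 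One small correction to your closing remark: the equivalence between quiver representations and $\Bbbk Q$-modules holds over any field; what you actually need from $\Bbbk$ (and all that the paper uses) is that it be infinite, so that a nonzero polynomial does not vanish identically on $\Bbbk^{|Q_1|}$.
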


\begin{proof}
The subsemigroup $S$ of $P(Q)$ consisting of the empty paths and the zero $z$ is a meet semilattice in which the empty paths form an anti-chain of size $n$.  It follows that the join irreducible elements of the lattice $S^{1}$ are the empty paths and so
\begin{equation*}
\effdim_{\Bbbk}(P(Q))\ge \effdim_{\Bbbk}(S)=\effdim_{\Bbbk}(S^{1})=n
\end{equation*}
by the results of Section~\ref{s2}.  This yields the lower bound.

If $p\ne q$ are coterminous paths in $Q$, let $V_{p,q}$ be the algebraic set consisting of those $\rho\in \mathcal{V}(\boldsymbol 1)$ such that $\rho_{p}=\rho_{q}$ (this subset is clearly polynomially defined).  Let $V_{p}$ be the algebraic set of representations $\rho$ such that $\rho_{p}=0$ for a path $p$.  Then the set of non-effective representations with dimension vector $\boldsymbol 1$ is the Zariski closed subset which is the union of the $V_{p,q}$ running over the finite set of all distinct pairs $p,q$ of coterminous paths and the $V_{p}$ running over the finite set of paths $p$.  Thus, by irreducibility of $\mathcal{V}(\boldsymbol 1)$, to complete the proof of the theorem it suffices to show that each $V_{p,q}$ and $V_{p}$ is a proper subset.

If $p$ is a path, then the representation given by $\rho_{e}=(1)$ for all edges $e$ does not belong to $V_{p}$.  Next assume that $p,q$ are distinct coterminous paths.  Since $Q$ is acyclic, a path is uniquely determined by its set of edges.  We may assume without loss of generality that there is an edge $e$ that appears in $p$ and not in $q$.  Define a representation $\rho\in \mathcal{V}(\boldsymbol 1)$ by putting $\rho_{e}=(0)$ and $\rho_{f}=(1)$ for all edges $f\ne e$.  Then $\rho_{p}=(0)$ and $\rho_{q}=(1)$.  Therefore, $\rho\notin V_{p,q}$, as required.
\end{proof}

\subsection{Truncated path semigroups}\label{spath.2}

Let $Q$ be a finite quiver with
$n$ vertices.  Let $N\in\mathbb{N}$ and $J$ denote the ideal of $P(Q)$ generated by all paths
of length one (i.e., all arrows). The semigroup $P(Q)/J^N$ is called the
{\em truncated path semigroup}.  Let $z$ denote the zero of this semigroup.  The following result generalizes Theorem~\ref{genericallyfree}.

\begin{theorem}\label{cr51}
Let $Q$ be a finite quiver. Assume that every vertex of $Q$ appears as
a vertex in some oriented cycle (or loop). Then
$\effdim_{\Bbbk}(P(Q)/J^N)=Nn$ for any al\-geb\-ra\-i\-cal\-ly
closed field $\Bbbk$.
\end{theorem}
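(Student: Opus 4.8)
The plan is to mimic the proof of Theorem~\ref{genericallyfree}, but with the representation variety being a product of copies of $ST_N(\Bbbk)$ indexed by the arrows of $Q$, subject to the quiver structure. More precisely, since every nonzero element of $P(Q)/J^N$ is a path of length at most $N-1$, Lemma~\ref{strictupper} (applied vertex by vertex, or directly via the fact that the path algebra modulo $J^N$ is generated by nilpotents in the radical) shows that an $Nn$-dimensional representation with dimension vector $\boldsymbol N=(N,\dots,N)$ assigns to each arrow $e$ a matrix in $\mathrm{Mat}_{N\times N}(\Bbbk)$, and the relevant representation variety is $\mathcal V(\boldsymbol N)=\prod_{e\in Q_1}\mathrm{Mat}_{N\times N}(\Bbbk)$, an irreducible affine space. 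First I would establish the lower bound: by hypothesis every vertex lies on an oriented cycle, so there is a path of length exactly $N-1$ through each vertex (wrap around the cycle), giving an element $s$ with $s^{?}$ data forcing, via Corollary~\ref{cornilp} together with $n$ disjoint "copies" of this behavior at the $n$ vertices — or more cleanly, via Theorem~\ref{aggmeff}-style reasoning, via Corollary~\ref{idembound} applied to the empty-path semilattice combined with a nilpotency argument — that $\effdim_\Bbbk(P(Q)/J^N)\ge Nn$. Actually the cleanest route to the lower bound is: the $n$ empty paths are orthogonal idempotents, and at each vertex the truncated loop gives a nilpotent of index $N$ acting within $\varepsilon_xV$, so each $\varepsilon_xV$ must have dimension $\ge N$; summing over $x$ gives $\dim V\ge Nn$.

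For the upper bound, I would show that the non-effective locus in $\mathcal V(\boldsymbol N)$ is a proper closed subset, hence its complement is nonempty Zariski open and dense. As before, the non-effective locus is the finite union, over distinct coterminous nonzero paths $p\ne q$ of $\mathcal V_{p,q}=\{\rho:\rho_p=\rho_q\}$ together with $\mathcal V_p=\{\rho:\rho_p=0\}$ over all nonzero paths $p$. By irreducibility of $\mathcal V(\boldsymbol N)$ it suffices to exhibit, for each such condition, a single representation violating it. The key gadget: given a nonzero path $p=e_1\cdots e_k$ (with $k\le N-1$) from vertex $t(p)$ to $s(p)$, I want a representation $\rho$ with $\rho_p\ne 0$ and, simultaneously, $\rho_q=0$ for every path $q$ that is not a "factor" of $p$ sitting inside $p$ in the appropriate positional sense. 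The construction: at each vertex fix the basis $f_0,\dots,f_{N-1}$ of $\Bbbk^N$; assign $\rho_{e_i}$ to send $f_j\mapsto f_{j-1}$ at exactly the one position dictated by where $e_i$ occurs in $p$, and $0$ otherwise, and set $\rho_f=0$ for every arrow $f$ not occurring in $p$. This is the exact analogue of the $\varphi$ built in the proof of Theorem~\ref{genericallyfree}, now threaded through the quiver's vertices; one checks that $\rho_p$ is a nonzero matrix unit while any coterminous $q\ne p$ of length $\le N-1$ maps to $0$ (if $q$ uses an arrow not in $p$, immediately $0$; if $q$ uses the arrows of $p$ in a different pattern, the "staircase" positions fail to line up). Handling $\mathcal V_p$ alone is even easier — the all-Jordan-block assignment $\rho_e=J$ for every arrow $e$ makes every path of length $k\le N-1$ act as $J^k\ne 0$.

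The main obstacle, and the one requiring care, is the bookkeeping in the staircase construction when $Q$ has repeated arrows along $p$ (a loop traversed several times, or two occurrences of the same arrow in a cycle passed twice): then $\rho_{e_i}$ for a fixed arrow $e$ must simultaneously realize two or more of the single-step shifts $f_{j}\mapsto f_{j-1}$, and I must verify these can be bundled into one matrix on $\Bbbk^N$ without creating spurious nonzero compositions for the unwanted coterminous paths $q$. The resolution is that the occurrences of a repeated arrow in $p$ land at distinct, strictly increasing positions $j$, so the corresponding elementary shifts are supported on disjoint pairs of basis vectors and their sum is still a legitimate strictly-upper-triangular matrix; a short induction on path length then shows $\rho_q\ne 0$ forces $q$ to be exactly $p$. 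Once this lemma is in hand, the finitely many properness statements follow and irreducibility of $\mathcal V(\boldsymbol N)$ finishes the proof, exactly parallel to Theorem~\ref{thmpath} and Theorem~\ref{genericallyfree}.
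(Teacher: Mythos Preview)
Your upper bound is essentially the paper's argument, with one correction: you cannot work in all of $\mathcal V(\boldsymbol N)=\prod_{e\in Q_1}\mathrm{Mat}_{N\times N}(\Bbbk)$, because a generic point there does not give a module for the \emph{truncated} algebra (paths of length $N$ need not act as zero). The paper fixes this by working inside the irreducible closed subvariety $\mathcal W=\prod_{e\in Q_1}ST_N(\Bbbk)$; since your staircase and all-Jordan-block constructions already lie in $\mathcal W$, your properness arguments go through there unchanged. Your careful treatment of repeated arrows in $p$ is exactly what ``mutatis mutandis'' means in the paper's reference to Theorem~\ref{genericallyfree}.

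Your lower bound, however, has a real gap. The sentence ``at each vertex the truncated loop gives a nilpotent of index $N$ acting within $\varepsilon_x V$'' is false when the cycle through $x$ has length $k>1$: the only elements of $\varepsilon_x S\varepsilon_x$ coming from that cycle are powers of the full cycle $c$, and $c$ has nilpotency index $\lceil N/k\rceil$, not $N$. Corollary~\ref{cornilp} then only yields $\dim\varepsilon_x V\ge\lceil N/k\rceil$. Concretely, for the $2$-cycle $1\rightleftarrows 2$ with $N=3$ your argument gives $\dim V\ge 4$, whereas the theorem asserts $\dim V\ge 6$. Having a nonzero path of length $N-1$ \emph{through} $x$ does not help: such a path is a nonzero map between two (possibly different) summands $\varepsilon_y V\to\varepsilon_x V$, which gives no lower bound on $\dim\varepsilon_x V$ by itself.

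The paper's lower bound is genuinely more delicate. It first treats the case where $Q$ is a single oriented cycle: then $A=\Bbbk Q/\Bbbk J^N$ is a self-injective Nakayama algebra, and each projective--injective $A\varepsilon_x$ has simple socle generated by $p_x-z$, where $p_x$ is the unique path of length $N-1$ starting at $x$. Bergman's Lemma~\ref{bergmanlemma} then forces $A\varepsilon_x$ (for every $x$) to embed in any effective module $V$, so the whole regular module $A$ is a direct summand of $V$; applying $\varepsilon_x$ gives $\dim\varepsilon_x V\ge\dim\varepsilon_x A=N$. For general $Q$, one passes to the subsemigroup $T$ generated by a shortest cycle through $x$ (so that $T$ is the truncated path semigroup of an honest cycle), and applies the cycle case to the restriction of $V$ to $T$. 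This is the missing idea.
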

\begin{proof}
Set $S:=P(Q)/J^N$ and let $A$ denote the truncated path algebra
$\Bbbk Q/\Bbbk J^N$. Assume first that $Q$ is an oriented cycle (or loop).
Then $A$ is a self-injective algebra, moreover, for every vertex $x$
the socle of the indecomposable projective-injective module $A\varepsilon_x$
is generated by the unique longest path $p_{x}$ starting at $x$.  Identifying $A$ with the algebra $\Bbbk S(1-z)$ and using that $\Bbbk S=\Bbbk S(1-z)\oplus \Bbbk z$, we see that the simple socle of $A\epsilon_{x}$ is generated by $p_{x}-z$ in $\Bbbk S$.
Therefore Lemma~\ref{bergmanlemma} implies that every indecomposable
projective-injective $A$-module is a submodule (and hence a direct summand)
of every effective $S$-module. Thus the regular representation of
$A$ is a direct summand of every effective module, which yields
$\effdim_{\Bbbk}(S)=Nn$ and, further, the inequality
\begin{equation}\label{eq123}
\dim \varepsilon_x V\geq N
\end{equation}
for any effective $S$-module $V$.

Assume now that $Q$ is a finite quiver such that every vertex of $Q$
appears as a vertex in some oriented cycle (or loop). Let $V$ be an
effective $S$-module. For a vertex $x$
let $Q'$ be a shortest oriented cycle in which $x$ appears and let
$T$ be the subsemigroup of $S$ generated by all arrows and
empty paths of $Q'$. Then \eqref{eq123} applied to $T$ implies
$\dim \varepsilon_x V\geq N$ and hence
$\effdim_{\Bbbk}(S)\geq Nn$.

On the other hand, if we set $n_x=N$ for all $x\in Q_0$ and let $\boldsymbol{N}$ be the corresponding dimension vector, then
\begin{displaymath}
\mathcal W := \prod_{e\in Q_1}ST_N(\Bbbk)
\end{displaymath}
is a closed subvariety of $\mathcal V(\boldsymbol N)$.  Note that $\mathcal W$ is irreducible because it is isomorphic to an affine space of dimension $|Q_1|\cdot \binom{N}{2}$.  Clearly the $\Bbbk Q$-module associated to any representation in $\mathcal W$ is a $\Bbbk Q/\Bbbk J^N$-module of dimension $nN$.  The proof of Theorem~\ref{genericallyfree} then applies \textit{mutatis mutandis} to show that the effective representations of $S$ form a non-empty Zariski open subset of $\mathcal W$.  This proves the claim.
\end{proof}

The hypothesis that each vertex belongs to an oriented cycle (or loop) is equivalent to the assumption that each strongly connected component of $Q$ contains at least one edge.
In the previous theorem, the proof of the upper bound does not require that each vertex of $Q$ appears in an oriented cycle, but the proof of the lower bound does.  This leads to the following question.

\begin{question}\label{qtruncated}
Let $Q$ be a finite quiver.  What is $\effdim_{\Bbbk}(P(Q)/J^N)$ for an algebraically closed field $\Bbbk$?
\end{question}

Quite a bit is known about the geometry of representation varieties for truncated path algebras, see~\cite{BHT}, and perhaps this can be of use for solving Question~\ref{qtruncated}.

\subsection{Incidence algebras}\label{spath.3}

Another class of path semigroups which is easily covered by our methods is the class of incidence algebras. Let $Q$ be a finite acyclic quiver and
$\sim$ the congruence on $P(Q)$ defined as follows: $p\sim q$ if and only
if $p$ and $q$ are coterminous. Vertices of the quiver $Q$ form a poset defined as follows: $x\leq y$ if and only if there is a path in $P(Q)$ from $x$ to $y$.
The quotient $I(Q):=P(Q)/{\sim}$ is called the {\em incidence semigroup} of this poset.  Notice that all finite posets $P$ arise in this way since we can take $Q$ to be the oriented Hasse diagram of $P$. Also, $I(Q)$ depends only on the poset structure and not $Q$.  The algebra $\Bbbk I(Q)/\Bbbk z$ is usually called the \emph{incidence algebra} of $Q$, or more precisely of the associated poset.

\begin{theorem}\label{cor52}
Let $Q$ be a finite acyclic quiver.  Then $\effdim_{\Bbbk}(I(Q))=n$ for any field $\Bbbk$.
\end{theorem}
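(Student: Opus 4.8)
The plan is to mimic the structure of the proof of Theorem~\ref{thmpath} very closely, since $I(Q)$ is obtained from $P(Q)$ by collapsing coterminous paths and an effective representation of $P(Q)$ in which coterminous paths already act equally descends to an effective representation of $I(Q)$. First I would establish the lower bound exactly as before: the subsemigroup of $I(Q)$ consisting of the empty paths (which survive in the quotient) together with the zero $z$ is a meet semilattice whose empty paths form an anti-chain of size $n$, so $S^{1}$ is a lattice with $n$ join irreducible elements; hence $\effdim_{\Bbbk}(I(Q))\ge n$ by Corollary~\ref{latticecase}. Note this part needs no assumption on $\Bbbk$.

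For the upper bound I would no longer argue over an algebraically closed field via representation varieties, because the statement asks for \emph{any} field; instead I would exhibit one explicit effective representation of dimension $n$ with dimension vector $\boldsymbol 1$. The natural candidate is the representation $\rho$ of $Q$ with $V_{x}=\Bbbk$ for every vertex $x$ and $\rho_{e}=(1)$ for every edge $e$; the associated module $V_{\rho}=\bigoplus_{x\in Q_{0}}\Bbbk$ has dimension $n$. For a path $p$ from $x$ to $y$ one computes $\rho_{p}=(1)\colon V_{y}\to V_{x}$, which is never zero, and if $p,q$ are coterminous then $\rho_{p}=\rho_{q}=(1)$ — but this is exactly what is required: in $I(Q)$ coterminous paths have been identified, so the induced representation $\overline\varphi$ of $I(Q)$ is well defined, and distinct elements of $I(Q)$ are either empty paths (acting as distinct rank-one idempotent projections onto distinct summands), or the class of a non-empty path from $x$ to $y$ (acting as the nonzero map $V_{y}\to V_{x}$, extended by $0$, which is distinct from every empty-path projection and from every class $x'\to y'$ with $(x',y')\ne (x,y)$), or $z$ (acting as $0$). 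Hence $\overline\varphi$ is injective, giving $\effdim_{\Bbbk}(I(Q))\le n$.

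The only point that requires a little care — and is the main obstacle — is verifying that the map $e\mapsto \varphi(e)$ really does extend to a homomorphism $\overline\varphi\colon I(Q)\to \mathrm{End}_{\Bbbk}(V_{\rho})$, i.e.\ that composing the extended-by-zero operators respects both concatenation of composable paths and the rule that a non-composable product is $z\mapsto 0$. This is the standard fact that a representation $\rho$ of the quiver $Q$ yields a $\Bbbk Q$-module with underlying space $\bigoplus_{x}V_{x}$ and the action of a path obtained by extending $\rho_{p}$ by $0$ on the other summands — recorded in the discussion preceding Theorem~\ref{thmpath} — together with the observation that coterminous paths act by the same operator here, so the $\Bbbk Q$-module structure factors through $\Bbbk I(Q)/\Bbbk z$, equivalently through $I(Q)$ with $z$ acting as $0$. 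With that in hand the separation checks above are routine case analysis on Green-type data (source and target of a path), and the two inequalities combine to give $\effdim_{\Bbbk}(I(Q))=n$.
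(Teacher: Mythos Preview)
Your proposal is correct and matches the paper's own proof essentially verbatim: the lower bound is obtained exactly as in Theorem~\ref{thmpath} via the semilattice of empty paths, and the upper bound is given by the ``natural'' representation assigning $\Bbbk$ to every vertex and the identity to every arrow, which the paper simply asserts ``obviously induces an effective module for $I(Q)$.'' Your write-up merely expands that ``obviously'' into the explicit case check.
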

\begin{proof}
That $\effdim_{\Bbbk}(I(Q))\geq n$ is proved similarly to
the first part of the proof of Theorem~\ref{thmpath}. On the other hand,
the {\em natural} $n$-dimensional representation of $Q$ that assigns
$\Bbbk$ to every vertex and the identity linear transformation to
every arrow obviously induces an effective module for $I(Q)$. The claim follows.
\end{proof}

\section{Bands and other examples}\label{s7}
In this section we consider a miscellany of other examples, starting with some bands.

\subsection{Rectangular bands}\label{s7.1}
Recall that a \emph{band} is a semigroup in which each element is idempotent.
Let us denote by $R_{m,n}$, for $m,n\in\mathbb{N}$, the {\em $m\times n$-rectangular band}, which is the semigroup
with underlying set $\{1,\ldots,m\}\times \{1,\ldots, n\}$ and multiplication $(i,j)\cdot(i',j')=(i,j')$.  The representation theory of $2\times 2$-rectangular bands is essentially worked out in~\cite{BMM}.

\begin{proposition}\label{rectband}
Let $m,n\in\mathbb{N}$.
\begin{enumerate}[$($a$)$]
\item\label{rectband.1} If $m=n=1$, then $\effdim_{\Bbbk}(R_{m,n})=0$ for any field $\Bbbk$.
\item\label{rectband.2} If $m=1$ and $n\neq 1$, or $n=1$ and $m\neq 1$, then
$\effdim_{\Bbbk}(R_{m,n})\geq 2$ for any field $\Bbbk$, moreover,
$\effdim_{\Bbbk}(R_{m,n})= 2$ for any field $\Bbbk$ such that $|\Bbbk|\geq \max(m,n)$.
\item\label{rectband.3} If $m\neq 1$ and $n\neq 1$, then
$\effdim_{\Bbbk}(R_{m,n})\geq 3$ for any field $\Bbbk$, moreover,
$\effdim_{\Bbbk}(R_{m,n})= 3$ for any field $\Bbbk$
such that $|\Bbbk|\geq \max(m,n)$.
\end{enumerate}
\end{proposition}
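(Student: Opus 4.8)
The plan is to handle the three cases separately, with cases \eqref{rectband.1} and the lower bounds in \eqref{rectband.2}, \eqref{rectband.3} being easy, and the constructions of effective modules of the claimed dimensions being the real content. First, if $m=n=1$ then $R_{1,1}$ is trivial, so the zero-dimensional representation is (vacuously) effective, giving \eqref{rectband.1}. For the lower bounds, note that $R_{m,n}$ with $\max(m,n)\ge 2$ is a nontrivial semigroup; a one-dimensional effective representation would embed it into the multiplicative monoid of $\Bbbk$, which is commutative and has the property that $x^2=x$ forces $x\in\{0,1\}$, so its only band is trivial --- hence $\effdim\ge 2$ always when $\max(m,n)\ge 2$. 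For \eqref{rectband.3}, when both $m,n\ge 2$ I would argue that $R_{m,n}$ contains $R_{2,2}$ as a subsemigroup, so by Proposition~\ref{elemfacts}\eqref{elemfacts.4} it suffices to show $\effdim_{\Bbbk}(R_{2,2})\ge 3$; and a two-dimensional effective representation would make $R_{2,2}$ a band of $2\times2$ matrices that is not a left-zero or right-zero semigroup, which I expect to rule out by a direct rank/eigenvalue argument (idempotent matrices are conjugate to diagonal projections, and two commuting-enough projections in dimension $2$ generating a $2\times2$ rectangular band can be shown impossible).

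For the upper bound in \eqref{rectband.2}, say $n=1$ and $m\ge 2$, so $R_{m,1}$ is the left-zero semigroup on $m$ elements: $(i)\cdot(i')=(i)$. I would realize this in dimension $2$ by sending the $i$-th element to the rank-one idempotent matrix projecting onto a line $\ell_i$ along a fixed complementary line; concretely, pick $m$ distinct scalars $a_1,\dots,a_m\in\Bbbk$ (possible since $|\Bbbk|\ge m$) and set $\varphi(i)=\begin{pmatrix}1 & a_i\\ 0 & 0\end{pmatrix}$. One checks $\varphi(i)^2=\varphi(i)$ and $\varphi(i)\varphi(i')=\varphi(i)$, and distinctness of the $a_i$ makes it injective. (The case $m=1$, $n\ge2$ is dual via $S^{\mathrm{op}}$ and Proposition~\ref{elemfacts}\eqref{elemfacts.6}, or handled by transposing the matrices.)

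For the upper bound in \eqref{rectband.3} with $m,n\ge 2$, the idea is to combine the two one-sided constructions into dimension $3$: write $V=\Bbbk^3$ and send $(i,j)$ to a rank-one idempotent of the form $e_{ij}=u_j\, v_i^{T}$ where $v_i^{T}u_j=1$ for all $i,j$, so that $e_{ij}e_{i'j'}=u_j(v_i^Tu_{j'})v_{i'}^T=u_jv_{i'}^T=e_{i j'}$ realizes the rectangular band multiplication exactly. Taking, for instance, $v_i^{T}=(1,\ a_i,\ 0)$ and $u_j=(1,\ 0,\ b_j)^{T}$ with $a_1,\dots,a_m$ distinct and $b_1,\dots,b_n$ distinct (possible since $|\Bbbk|\ge\max(m,n)$) gives $v_i^Tu_j=1$ and makes $(i,j)\mapsto e_{ij}$ injective, while $e_{ij}^2=u_j(v_i^Tu_j)v_i^T=e_{ij}$; so this is an effective $3$-dimensional representation. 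The main obstacle is the lower bound $\effdim_{\Bbbk}(R_{2,2})\ge 3$: one must show no faithful copy of $R_{2,2}$ sits inside $\mathrm{Mat}_{2\times2}(\Bbbk)$, which requires a genuine (if short) case analysis on how four rank-$\le1$ idempotents in the plane can satisfy $pq=$ (row of $p$, column index of $q$); I expect this to come down to showing that the four images $e_{11},e_{12},e_{21},e_{22}$ would force $e_{11}=e_{12}$ or $e_{21}=e_{11}$ etc., contradicting injectivity, using that in dimension $2$ a nonzero non-identity idempotent has a one-dimensional image and kernel.
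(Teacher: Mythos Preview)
Your overall strategy matches the paper's, but there is a systematic indexing error in your rank-one constructions, and the key lower bound $\effdim_{\Bbbk}(R_{2,2})\ge 3$ is left as an expectation rather than proved.

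In \eqref{rectband.2}, your matrix $\varphi(i)=\begin{pmatrix}1 & a_i\\ 0 & 0\end{pmatrix}$ does \emph{not} represent the left-zero semigroup $R_{m,1}$: direct computation gives
\[
\varphi(i)\varphi(i')=\begin{pmatrix}1 & a_{i'}\\ 0 & 0\end{pmatrix}=\varphi(i'),
\]
which is the right-zero law. The paper uses the transpose $\begin{pmatrix}1 & 0\\ a_i & 0\end{pmatrix}$, for which $\varphi(i)\varphi(i')=\varphi(i)$ as required. The same swap infects \eqref{rectband.3}: with your convention $e_{ij}=u_jv_i^{T}$ one gets $e_{ij}e_{i'j'}=u_jv_{i'}^{T}$, but in your own notation that is $e_{i'j}$, not $e_{ij'}$ as you assert. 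The fix is to index the column vector by the left coordinate and the row vector by the right: setting $e_{ij}=u_iv_j^{T}$ (with $v_j^{T}u_i=1$) yields $e_{ij}e_{i'j'}=e_{ij'}$ correctly, and your concrete choice then produces precisely the paper's matrix
\[
(i,j)\longmapsto\begin{pmatrix}1 & b_j & 0\\ 0 & 0 & 0\\ a_i & a_ib_j & 0\end{pmatrix}.
\]

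For the lower bound in \eqref{rectband.3} you correctly isolate $R_{2,2}$ as the crux but do not carry out the argument. The paper's proof is short and worth knowing: in a putative effective $2$-dimensional representation all four images must have the same rank (since each element is a factor of every other), hence rank one. From $(i,j)(i',j)=(i,j)$ one sees that $\ker\varphi(i,j)$ depends only on $j$; call it $V_j$. Injectivity forces $V_1\ne V_2$, so $\Bbbk^2=V_1\oplus V_2$, and in this basis
\[
\varphi(1,1)=\begin{pmatrix}0 & a\\ 0 & 1\end{pmatrix},\qquad
\varphi(2,2)=\begin{pmatrix}1 & 0\\ b & 0\end{pmatrix}.
\]
Then $\varphi(2,1)=\varphi(2,2)\varphi(1,1)=\begin{pmatrix}0 & a\\ 0 & ab\end{pmatrix}$, and idempotence forces $ab=1$, whence $\varphi(2,1)=\varphi(1,1)$, contradicting injectivity.
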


\begin{proof}
Claim \eqref{rectband.1} is obvious. That $\effdim_{\Bbbk}(R_{m,1})\geq 2$ if $m>1$ and
$\effdim_{\Bbbk}(R_{1,n})\geq 2$ if $n>1$ are also obvious. If $|\Bbbk|\geq \max(m,n)$, then
an effective representation of $R_{m,1}$ by $2\times 2$-matrices over $\Bbbk$ can be obtained by
sending elements of $R_{m,1}$ to different matrices of the form
\begin{equation*}
\left(\begin{array}{cc}1&0\\a&0\end{array}\right), \quad a\in \Bbbk.
\end{equation*}
For $R_{1,n}$ one just transposes the above matrices. Claim \eqref{rectband.2} follows.

Assume now that $m,n\neq 1$. That $\effdim_{\Bbbk}(R_{m,n})\geq 2$ is obvious. We claim that, in fact, $\effdim_{\Bbbk}(R_{m,n})\geq 3$.  It is enough to prove this in the case $m=n=2$.
Assume that we have an effective representation $\varphi$ of $R_{2,2}$ by $2\times 2$ matrices. Then all
these matrices must have the same rank, hence they all have rank one.
 For $i,j=1,2$, let
$V_{(i,j)}$ denote the kernel of the matrix representing $(i,j)$.
It is easy to see that $V_{(1,i)}=V_{(2,i)}=:V_i$ for $i=1,2$ and that $V_1\ne V_2$.  Hence $\Bbbk^{2}=V_{1}\oplus V_{2}$.  This yields a contradiction because one must have
\begin{equation*}
\varphi(1,1)=\left(\begin{array}{cc} 0 & a \\ 0 & 1 \end{array}\right)\qquad \varphi(2,2)=\left(\begin{array}{cc} 1& 0 \\ b & 0 \end{array}\right)
\end{equation*}
and so
\begin{equation*}
\varphi(2,1)=\varphi(2,2)\varphi(1,1)=\left(\begin{array}{cc} 0 & a \\ 0 & ab\end{array}\right).
\end{equation*}
Idempotence implies $ab=1$ and so $\varphi(2,1)=\varphi(1,1)$, a contradiction.
This shows that in the case $m,n>1$ the semigroup $R_{m,n}$ does not have any effective representation
by $2\times 2$ matrices over any field.

Assume that $|\Bbbk|\geq \max(m,n)$, let $a_i$, $i=1,2,\dots,m$, be distinct elements in $\Bbbk$;
and let $b_j$, $j=1,2,\dots,n$, be distinct elements in $\Bbbk$. Then the representation
\begin{displaymath}
(i,j)\mapsto \left(\begin{array}{ccc}1&b_j&0\\0&0&0\\a_i&a_ib_j&0\end{array}\right)
\end{displaymath}
is an effective representation of $R_{m,n}$ by $3\times 3$-matrices over $\Bbbk$.
This completes the proof.
\end{proof}

\subsection{Left regular bands}
Recently, the class of \emph{left regular bands}, that is, bands satisfying the identity $xyx=xy$, has become of importance in algebraic combinatorics and probability, see~\cite{AM,BHR,Br1,Br2,Bj,Sal1,Sal2}.

The set $\Lambda(M)$ of principal left ideals of a left regular band monoid $M$ form a lattice under inclusion, in fact, $Ma\cap Mb=Mab$ for all $a,b\in M$.  Thus $\sigma\colon M\to \Lambda(M)$ given by $\sigma(a)=Ma$ is a homomorphism called the \emph{support map}.  It is the maximal semilattice image homomorphism and also the quotient map of $M$ by its largest $\mathbf {LI}$-congruence.  We recall that a congruence is an \emph{$\mathbf{LI}$-congruence} if whenever $S$ is a congruence class which is a subsemigroup and $e\in S$ is an idempotent, one has $eSe=\{e\}$.  In a left regular band one has     $\mathcal{L}=\mathcal{J}$.

For any field $\Bbbk$,  the induced mapping $\overline{\sigma}\colon \Bbbk M\to \Bbbk \Lambda(M)$ is the semisimple quotient, cf.~\cite{Br1,Br2}.  In particular, the semisimple modules for $M$ are exactly the modules for $\Lambda(M)$.  Thus the simple modules for $\Bbbk$ are one-dimensional and are in bijection with the $\mathcal{L}$-classes of $M$.  The character associated to an $\mathcal{L}$-class $\mathcal{L}_{e}$ sends $m$ to $1$ if $e\in Mm$ and $0$, otherwise.

The following lemma will be used to obtain a lower bound on the effective dimension of a left regular band.

\begin{lemma}\label{lrblower}
Let $M$ be a finite left regular band monoid and let $\Bbbk$ be a field.  If $V$ is an effective module for $M$, then the direct sum of the composition factors of $V$ is an effective module for $\Lambda(M)$.  Moreover, if $M$ does not have a zero element, then the trivial module is also a composition factor of $V$.
\end{lemma}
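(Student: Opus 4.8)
The plan is to exploit two facts established earlier in the excerpt. First, by Corollary~\ref{Steinbergcor}\eqref{Steinbergcor.2}, every simple $\Bbbk M$-module occurs as a composition factor of some tensor power $V^{\otimes k}$ of the effective module $V$. Second, since $M$ is a left regular band, the semisimple quotient of $\Bbbk M$ is precisely $\Bbbk\Lambda(M)$ via the support map $\overline{\sigma}$, so the simple $\Bbbk M$-modules are exactly the (one-dimensional) $\Bbbk\Lambda(M)$-modules, indexed by $\mathcal{L}$-classes, and each character $\chi_{e}$ sends $m$ to $1$ if $e\in Mm$ and to $0$ otherwise. I want to transfer ``occurs in a tensor power of $V$'' down to ``occurs in the semisimplification of $V$ itself.''

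The key observation is that for \emph{one-dimensional} representations the tensor product is just the pointwise product of characters, and the characters of $M$ are idempotent-valued: each $\chi_{e}$ takes values in $\{0,1\}$, so $\chi_{e}\otimes\chi_{e}=\chi_{e}$, and more generally the composition factors of $V^{\otimes k}$ are exactly the $k$-fold products $\chi_{e_{1}}\cdots\chi_{e_{k}}$ of composition factors $\chi_{e_{i}}$ of $V$ (this is the left-regular-band analogue of the computation ``composition factors of $V^{\otimes n}$ are elements of $X^{n}$'' used in the proof of Proposition~\ref{seppoints}). Thus the set of characters appearing in $\bigoplus_{k\ge 0}V^{\otimes k}$ is the submonoid of $\widehat{\Lambda(M)}$ (equivalently, of the character monoid of $M$) generated by the characters appearing in the semisimplification $\overline{V}$ of $V$. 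Combining this with Corollary~\ref{Steinbergcor}\eqref{Steinbergcor.2}, which says every character of $M$ lies in that submonoid, we conclude that the characters appearing in $\overline{V}$ generate the whole character monoid. By the same argument as in Proposition~\ref{seppoints} (or simply because a set of characters of $\Lambda(M)$ separates points iff it generates the character monoid, $\Lambda(M)$ being a finite lattice and hence a commutative inverse monoid self-dual in the sense of Section~\ref{s2}), a generating set of characters separates the points of $\Lambda(M)$. Hence the direct sum of the composition factors of $V$ is an effective $\Lambda(M)$-module.

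For the last sentence: when $M$ has no zero element, the support map image $\Lambda(M)$ is a lattice with a top element $M$ itself (the principal left ideal generated by $1$), and the character $\chi$ of the top $\mathcal{L}$-class $\mathcal{L}_{1}$ is the one sending every $m$ to $1$ if $1\in Mm$, i.e.\ the trivial character. I would argue that the trivial character must appear among the composition factors of $V$: since the characters in $\overline{V}$ generate the full character monoid, and the trivial character is the identity of that monoid, if it did not itself appear then it would have to be a nontrivial product $\chi_{e_{1}}\cdots\chi_{e_{k}}$ of strictly ``smaller'' characters; but in the lattice $\Lambda(M)$ (ordered so that the product/meet of characters corresponds to join of the indexing $\mathcal{L}$-classes under the opposite order), the identity character is join-irreducible-free only if it is itself a generator — concretely, the trivial character corresponds to the top of $\Lambda(M)$ and cannot be written as a join of strictly smaller elements. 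Hence it is forced to be one of the composition factors of $V$.

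The main obstacle I anticipate is making the bookkeeping about ``composition factors of a tensor power are products of composition factors'' fully rigorous in the non-semisimple setting: one must pass to the semisimplification at each stage and check that semisimplification commutes appropriately with the (comodule) tensor product $s(v\otimes w)=sv\otimes sw$, which is where the idempotence of the characters ($\chi_e^2=\chi_e$) does the real work. Once that is in place, everything else is a routine translation into the lattice language of Section~\ref{s2}.
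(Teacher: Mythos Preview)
Your argument for the first statement is correct and takes a genuinely different route from the paper. The paper invokes an external result (\cite[Lemma~3.1]{AMSV}) to show that passing from $\varphi$ to its semisimplification $\rho$ collapses only an $\mathbf{LI}$-congruence, and then uses that $\sigma$ is the maximal semilattice quotient to conclude that $\ker\rho=\ker\sigma$. Your approach stays internal to the paper, combining Corollary~\ref{Steinbergcor}\eqref{Steinbergcor.2} with the observation that composition factors of $V^{\otimes k}$ are pointwise products of composition factors of $V$ (the filtration argument you worry about is routine once the simples are one-dimensional), and then invoking Proposition~\ref{seppoints} for the semilattice $\Lambda(M)$. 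This is a nice self-contained alternative.

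The second statement, however, has a genuine gap. First, a misidentification: the trivial character is \emph{not} $\chi_{1}$. For any non-identity element $m$ of a band one has $1\notin Mm$, so $\chi_{1}(m)=0$. The trivial character is $\chi_{e}$ for $e$ in the minimal ideal; it corresponds to the \emph{bottom} of $\Lambda(M)$, not the top. More seriously, the core deduction fails: knowing that the composition factors of $V$ generate $\widehat{\Lambda(M)}$ \emph{as a monoid} says nothing about whether the identity element (the trivial character) is among them, since the identity is the empty product and lies in every submonoid. Indeed, Corollary~\ref{Steinbergcor}\eqref{Steinbergcor.2} already hands you the trivial module as $V^{\otimes 0}$, so no information about $V$ itself is being extracted. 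Note also that your argument never uses the hypothesis that $M$ has no zero; and when $M$ \emph{does} have a zero $z$, the module $V/zV$ is still effective yet has no trivial composition factor, so the hypothesis is essential and must enter somewhere. The paper's argument is direct: every non-trivial simple is annihilated by the minimal ideal $I$, so if the trivial module were absent from the composition series of $V$, every element of $I$ would act nilpotently and hence (being idempotent) by zero, contradicting effectiveness because $|I|\ge 2$ when $M$ has no zero.
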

\begin{proof}
We prove the second statement first.  Let $I$ be the minimal ideal of $M$.  Then all non-trivial simple modules are annihilated by $I$.  It follows that the images of the elements of $I$ are nilpotent under any representation without the trivial representation  as a composition factor.  But $I$ consists of idempotents so $I$ maps to $0$ under any representation without the trivial module as a composition factor.

Assume now that $V$ is effective.  Let $\varphi$ be the representation of $M$ associated to $V$ and let $\rho$ be the representation associated to the direct sum of its  composition factors.  Then $\rho(M)$ is a semilattice.  Thus $\rho$ factors through $\sigma$.  On the other hand, \cite[Lemma 3.1]{AMSV} implies that the congruence associated to $\rho$ is an $\mathbf{LI}$-congruence.  Thus the congruences associated to $\rho$ and $\sigma$ coincide. This establishes the lemma.
\end{proof}

Let $\{+,-,0\}$ be the semigroup where $0$ is the identity and $\{+,-\}$ is a left zero semigroup.   To obtain upper bounds on the effective dimension of left regular bands we need an effective representation of $\{+,-,0\}^{n}$.

\begin{proposition}\label{signseq}
Let $\Bbbk$ be a field.  Then $\{+,-,0\}^{n}$ has an effective representation over $\Bbbk$ of dimension $n+1$.
\end{proposition}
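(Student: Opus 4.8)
The plan is to write down, for every field $\Bbbk$, an explicit injective linear representation of $\{+,-,0\}^{n}$ on $\Bbbk^{n+1}$. Identify the elements of $\{+,-,0\}^{n}$ with sign vectors $\epsilon=(\epsilon_{1},\dots,\epsilon_{n})$, the product being coordinatewise, where on each coordinate $(\epsilon\epsilon')_{i}=\epsilon_{i}$ if $\epsilon_{i}\neq 0$ and $(\epsilon\epsilon')_{i}=\epsilon'_{i}$ if $\epsilon_{i}=0$; the identity is $\mathbf 0=(0,\dots,0)$. Fix a basis $e_{0},e_{1},\dots,e_{n}$ of $V=\Bbbk^{n+1}$ and define $\varphi(\epsilon)\in\mathrm{End}_{\Bbbk}(V)$ by
\begin{equation*}
\varphi(\epsilon)e_{0}=e_{0}+\sum_{i\,:\,\epsilon_{i}=+}e_{i},\qquad
\varphi(\epsilon)e_{i}=\begin{cases}e_{i},&\text{if }\epsilon_{i}=0,\\ 0,&\text{if }\epsilon_{i}\in\{+,-\},\end{cases}\qquad 1\le i\le n.
\end{equation*}
Viewing $V=\Bbbk^{n}\oplus\Bbbk e_{0}$, the operator $\varphi(\epsilon)$ is simply the linearization of the affine self-map $x\mapsto Ax+b$ of $\Bbbk^{n}$ that leaves the $i$-th coordinate untouched when $\epsilon_{i}=0$ and overwrites it by the constant $1$ (if $\epsilon_{i}=+$) or $0$ (if $\epsilon_{i}=-$); thus $\varphi$ may equivalently be described as the linearization of a homomorphism of $\{+,-,0\}^{n}$ into the monoid of affine self-maps of $\Bbbk^{n}$.

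Next I would check that $\varphi$ is a monoid homomorphism, i.e. that $\varphi(\epsilon)\varphi(\epsilon')=\varphi(\epsilon\epsilon')$ and $\varphi(\mathbf 0)=\mathrm{id}_{V}$. Since the action in the coordinate directions $e_{1},\dots,e_{n}$ is decoupled, this reduces to a coordinatewise verification: performing ``freeze $x_{i}$ to its $\epsilon'$-value or keep it'' followed by ``freeze $x_{i}$ to its $\epsilon$-value or keep it'' returns ``keep'' precisely when $\epsilon_{i}=\epsilon'_{i}=0$, i.e. when $(\epsilon\epsilon')_{i}=0$, and otherwise returns the frozen value attached to $(\epsilon\epsilon')_{i}$, because $(\epsilon\epsilon')_{i}=\epsilon_{i}$ unless $\epsilon_{i}=0$. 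Unwinding this on the basis vectors $e_{0},\dots,e_{n}$ gives the desired identity, and $\varphi(\mathbf 0)=\mathrm{id}_{V}$ is immediate from the formula.

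It then remains to prove that $\varphi$ is effective, and here I would simply recover $\epsilon$ from $\varphi(\epsilon)$: for $1\le i\le n$ one has $\epsilon_{i}=0$ if and only if $\varphi(\epsilon)e_{i}=e_{i}$ (otherwise $\varphi(\epsilon)e_{i}=0$), and in the latter case $\epsilon_{i}=+$ or $\epsilon_{i}=-$ according as the coefficient of $e_{i}$ in $\varphi(\epsilon)e_{0}$ is $1$ or $0$. Hence $\varphi$ is injective, and since $\dim_{\Bbbk}V=n+1$ this produces the asserted effective representation, so in particular $\effdim_{\Bbbk}(\{+,-,0\}^{n})\le n+1$.

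I do not expect a genuine obstacle; the construction is itself the content, and the only step needing a little care is the last one. The naive attempt---attaching a witness vector to each coordinate and sending $+,-$ to $\pm 1$ along it---fails over a field of characteristic $2$, where $1=-1$. The representation above sidesteps this by separating the data carried by $\varphi(\epsilon)$ into a ``frozen/free'' part on $e_{1},\dots,e_{n}$ (which records exactly the set $\{i:\epsilon_{i}=0\}$) and a $0/1$ marker inside $\varphi(\epsilon)e_{0}$ (which records exactly the set $\{i:\epsilon_{i}=+\}$); together these pin down $\epsilon$ using nothing more than $0\neq 1$ in $\Bbbk$, so the argument is uniform over all fields.
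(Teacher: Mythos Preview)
Your proof is correct and, in fact, produces exactly the same representation as the paper: the paper defines a right action of $\{+,-,0\}^{n}$ by partial maps on $\{0,1,\dots,n\}$ (with $0$ fixed, $i\alpha=i$ if $a_i=0$, $i\alpha=0$ if $a_i=+$, and $i\alpha$ undefined if $a_i=-$) and then linearizes, and the dual of that linearization is precisely your $\varphi$ with $e_i\leftrightarrow\delta_i$. The only difference is presentational---you frame the construction via affine self-maps of $\Bbbk^{n}$ and verify the homomorphism and injectivity explicitly, whereas the paper appeals to the general principle that an effective partial-transformation action linearizes to an effective linear representation.
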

\begin{proof}
Define an action of $M:=\{+,-,0\}^{n}$ on the right of the set $\{0,\ldots,n\}$ by partial functions by putting, for $\alpha=(a_{1},\ldots,a_{n})$ and $i\in \{1,\ldots,n\}$,
\begin{equation*}
i\alpha = \begin{cases} i, & \text{if}\ a_{i}=0;\\ 0,
 & \text{if}\ a_{i}=+\\
\text{undefined,}& \text{if}\ a_{i}=-\end{cases}
\end{equation*}
and putting $0\alpha=0$.  This action is effective and so linearizes to an effective $(n+1)$-dimensional representation of $M$.
\end{proof}

We shall prove shortly that the upper bound in Proposition~\ref{signseq} is tight.  Let us now consider the case of hyperplane face semigroups.  Suppose that $\mathcal{H}$ is a central hyperplane arrangement in $\mathbb R^{n}$.  Each hyperplane $H\in \mathcal{H}$ cuts $\mathbb{R}^{n}$ into two half-spaces $H^{+}$ and $H^{-}$.   Letting $H^{0}=H$, a \emph{face} is a non-empty intersection
\begin{equation*}
F:= \bigcap_{H\in \mathcal{H}}H^{\varepsilon_{H}}
\end{equation*}
where $\varepsilon_{H}\in \{+,-,0\}$.  The \emph{sign sequence} of $F$ is the $\mathcal{H}$-tuple $(\varepsilon_{H})$.  In this way, the set $F(\mathcal{H})$ of all faces of $\mathcal{H}$ can be identified with a subset of $\{+,-,0\}^{\mathcal{H}}$, which in fact is a submonoid called the \emph{face semigroup} of $\mathcal{H}$.  This makes $F(\mathcal{H})$ a left regular band.   See~\cite{AM,Br1,Br2,Sal2} for details, as well as for a geometric description of the multiplication.  The lattice $\Lambda(F(\mathcal{H}))$ can be identified with the \emph{intersection lattice} $L(\mathcal{H})$.  This is the lattice of subspaces of $\mathbb{R}^{n}$ consisting of finite intersections of elements of $\mathcal{H}$ ordered by reverse inclusion.  The support map takes a face to its span.

For example, consider the \emph{Boolean arrangement} in $\mathbb{R}^{n}$ whose hyperplanes are the coordinate hyperplanes (i.e., the orthogonal complements of the coordinate axes). In this case all sign sequences yield a face of the arrangement and so the corresponding hyperplane face semigroup is $\{+,-,0\}^{n}$.  Thus the next result implies that $n+1$ is the effective dimension of $\{+,-,0\}^{n}$.

\begin{theorem}\label{hyperplane}
Let $\mathcal H$ be a central hyperplane arrangement in $\mathbb{R}^{n}$.  Then for any field $\Bbbk$ one has $\effdim_{\Bbbk}(F(\mathcal{H}))=|\mathcal{H}|+1$.
\end{theorem}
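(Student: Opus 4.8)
The plan is to prove matching bounds $\effdim_{\Bbbk}(F(\mathcal H))\le |\mathcal H|+1$ and $\effdim_{\Bbbk}(F(\mathcal H))\ge |\mathcal H|+1$; we tacitly assume $\mathcal H\neq\emptyset$, the empty arrangement being a degenerate case. For the upper bound, recall from the preceding discussion that $F(\mathcal H)$ is a submonoid of $\{+,-,0\}^{\mathcal H}$, which is just $\{+,-,0\}^{n}$ with $n=|\mathcal H|$. By Proposition~\ref{signseq} the latter has an effective representation of dimension $|\mathcal H|+1$ over any field, so Proposition~\ref{elemfacts}\eqref{elemfacts.4} gives $\effdim_{\Bbbk}(F(\mathcal H))\le\effdim_{\Bbbk}(\{+,-,0\}^{\mathcal H})\le|\mathcal H|+1$.

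For the lower bound, let $V$ be a minimal effective module and count its distinct composition factors via Lemma~\ref{lrblower}. Recall that $F(\mathcal H)$ is a left regular band with semilattice image $\Lambda(F(\mathcal H))=L(\mathcal H)$, the intersection lattice viewed as a monoid under intersection, the support map sending a face to its span; since $L(\mathcal H)$ is a semilattice, every field is a good splitting field for it, so the results of Section~\ref{s2} apply. First I would observe that $F(\mathcal H)$ has no zero element: for any $H\in\mathcal H$ there is a chamber in $H^{+}$ and a chamber in $H^{-}$, and any two chambers $C,C'$ satisfy $CC'=C\neq C'$, so no absorbing element exists (equivalently, the minimal ideal of $F(\mathcal H)$ is a nontrivial left‑zero semigroup of chambers). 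Hence by Lemma~\ref{lrblower} the trivial $F(\mathcal H)$-module is a composition factor of $V$, and the direct sum $W$ of the composition factors of $V$ is an effective $L(\mathcal H)$-module.

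Now I would invoke Proposition~\ref{seppoints} together with the identification $\widehat{L(\mathcal H)}\cong (L(\mathcal H),\vee)$ used in the proof of Corollary~\ref{lattice}: since $W$ is effective for $L(\mathcal H)$, the set of one‑dimensional characters occurring in it separates the points of $L(\mathcal H)$, hence generates $\widehat{L(\mathcal H)}$, and therefore contains the unique minimal generating set, namely the join‑irreducible elements of $L(\mathcal H)$. The atoms of $L(\mathcal H)$ are exactly the $|\mathcal H|$ hyperplanes of $\mathcal H$ (among the subspaces $\bigcap_{H\in A}H$ the maximal proper ones are the singletons $A=\{H\}$), and every atom is join‑irreducible, whereas the bottom element $\mathbb R^{n}$, which corresponds to the trivial character, is not. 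Thus the composition factors of $V$ include the pairwise distinct characters attached to the $\ge|\mathcal H|$ join‑irreducibles of $L(\mathcal H)$ together with the trivial character, i.e.\ at least $|\mathcal H|+1$ distinct simple modules; since composition factors contribute at least $1$ each to the dimension, $\dim V\ge|\mathcal H|+1$, so $\effdim_{\Bbbk}(F(\mathcal H))\ge|\mathcal H|+1$, and combined with the upper bound this gives the theorem.

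The only step that is not automatic is the ``$+1$'': one must combine the fact that \emph{all} join‑irreducible characters of $L(\mathcal H)$ occur (from effectiveness over the semilattice, Proposition~\ref{seppoints}) with the fact that the trivial character also occurs (from $F(\mathcal H)$ being zero‑less, Lemma~\ref{lrblower}), and then check that these two families are disjoint — which holds precisely because the trivial character corresponds to the bottom element of $L(\mathcal H)$, which by definition is not join‑irreducible.
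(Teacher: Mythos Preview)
Your proof is correct and follows essentially the same route as the paper: the upper bound via Proposition~\ref{signseq} and the embedding $F(\mathcal H)\hookrightarrow\{+,-,0\}^{\mathcal H}$, and the lower bound via Lemma~\ref{lrblower} together with the count of join-irreducibles in $L(\mathcal H)$ plus the extra trivial factor coming from the absence of a zero. You have simply unpacked in detail what the paper compresses into two sentences; the one small slip is describing the monoid operation on $L(\mathcal H)$ as ``intersection'' (in the reverse-inclusion order that you use, intersection is the \emph{join}, while the product inherited from $\Lambda(F(\mathcal H))$ is the \emph{meet}), but your subsequent use of the $\widehat{L}\cong(L,\vee)$ identification and the join-irreducible count is consistent with the correct meet-monoid structure, so the argument goes through.
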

\begin{proof}
The lower bound follows from Lemma~\ref{lrblower}.  Indeed, the hyperplanes are the join irreducible elements of the dual of $L(\mathcal{H})$ and the minimal left ideal consists of the chambers and so $F(\mathcal{H})$ has no zero.

On the other hand, Proposition~\ref{signseq} shows that $\{+,-,0\}^{\mathcal{H}}$, and hence $F(\mathcal{H})$, has an effective representation of dimension $|\mathcal{H}|+1$.  This completes the proof.
\end{proof}

For our next result we shall need to recall the description of the projective indecomposable modules for a left regular band monoid $M$ from~\cite{Sal1}.  Let $\mathcal{L}_{e}$ be an $\mathcal{L}$-class of $M$. Then the projective cover of the simple module associated to $\mathcal{L}_{e}$ is the left Sch\"utzenberger representation.  That is, it has basis $\mathcal{L}_{e}$.  If $e\notin Mm$, then $m$ is sent to zero under this representation.  If $e\in Mm$, then $m$ acts on $\mathcal{L}_{e}$ by left multiplication.

We now compute the effective dimension of the free left regular band (monoid) $F_{n}$ on $n$-generators.  Let
 $A$ be a free generating set.  Then $F_{n}$ consists of all injective words $w$ over $A$ including the empty word.  By an \emph{injective word}, we mean one with no repeated letters.  The product is given by concatenation followed by removal of repetitions reading from left to right.  The lattice $\Lambda(F_{n})$ can be identified with the lattice of subsets of $A$ ordered by reverse inclusion.  The map $\sigma$ takes a word to its support, i.e., its set of letters.

\begin{theorem}
Let $\Bbbk$ be a field.  Then
\begin{equation*}
\effdim_{\Bbbk}(F_{n})=\binom{n}{2}+n+1.
\end{equation*}
\end{theorem}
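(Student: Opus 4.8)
The plan is to establish the upper and lower bounds separately, using the general machinery for left regular bands developed above. For the upper bound, observe that $F_n$ embeds into $\{+,-,0\}^{A}\times \{+,-,0\}^{\binom{A}{2}}$ in a way that records, for each injective word $w$, both its set of letters (coordinates indexed by $A$) and, for each two-element subset $\{a,b\}$ of $A$ that is contained in the support of $w$, which of $a$ or $b$ comes first in $w$ (coordinates indexed by $\binom{A}{2}$). More precisely, in the coordinate for $a\in A$ put $+$ if $a$ occurs in $w$ and $0$ otherwise; in the coordinate for $\{a,b\}$ put $+$ if $a$ precedes $b$ in $w$, $-$ if $b$ precedes $a$, and $0$ if at least one of $a,b$ is missing from $w$. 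One checks this is a well-defined injective monoid homomorphism: injectivity holds because an injective word is recovered from its support together with the induced linear order, and that linear order is recovered from its restrictions to pairs (since a linear order is determined by its $2$-element restrictions, which must be transitive — and here they automatically are, because they come from an actual word). By Proposition~\ref{signseq}, $\{+,-,0\}^{A}$ has an effective representation of dimension $n+1$ and $\{+,-,0\}^{\binom{A}{2}}$ one of dimension $\binom{n}{2}+1$; but we must be careful not to over-count. Actually the cleaner route is to directly build an effective action of $F_n$ on a set of size $\binom{n}{2}+n+1$, for instance on the disjoint union of $A$, the set of ordered pairs... — but since the claimed dimension is $\binom{n}{2}+n+1$, the direct product of the two sign-sequence representations has dimension $(n+1)+(\binom{n}{2}+1)=\binom{n}{2}+n+2$, one too big. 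So the upper bound argument instead needs the trivial summand shared between the two pieces to be collapsed once: embed $F_n$ into $\{+,-,0\}^A\times\{+,-,0\}^{\binom A2}$ and take the representation on $\{0,1,\dots,n\}\sqcup (\{0\}\cup\binom{A}{2}\text{-indexed states})$ with the two ``$0$'' states identified, giving exactly $\binom n2+n+1$. I would spell this out by combining the partial-function actions from the proof of Proposition~\ref{signseq} on a common base point.

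For the lower bound I would use Lemma~\ref{lrblower} together with the description of projective indecomposables recalled just before the statement. By Lemma~\ref{lrblower}, any effective module $V$ has, among its composition factors, enough simple modules to separate points of $\Lambda(F_n)$, i.e., the simples indexed by the join-irreducibles of $\Lambda(F_n)$ — which, $\Lambda(F_n)$ being the subset lattice of $A$ under reverse inclusion, are the $n$ simples indexed by the $(n-1)$-element subsets (the coatoms in the subset order, i.e., atoms after reversing), plus the fact that $F_n$ has no zero forces the trivial module (indexed by $A$ itself, the full support) to be a composition factor too. That alone only gives a lower bound of around $n+1$, which is far short of $\binom n2+n+1$. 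So the real content of the lower bound is a more refined argument: I expect one must show that for an effective module $V$, the multiplicity or the dimension contribution forces $\dim V\ge \binom n2+n+1$ by a Bergman's Lemma type argument (Lemma~\ref{bergmanlemma}) applied to suitable left ideals of $\Bbbk F_n$ whose socles contain differences $w-w'$ of injective words differing by a transposition of two adjacent letters. Concretely, for each $2$-element subset of $A$ one produces, inside an appropriate projective (the left Schützenberger representation on an $\mathcal L$-class), a simple socle generated by such a difference $w-w'$; separating $w$ from $w'$ then forces that projective as a submodule, and one argues these projectives (over the various $\mathcal L$-classes and the various pairs) have total dimension at least $\binom n2+n+1$, or better, that a single carefully chosen left ideal already has this dimension.

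The main obstacle, and where I would spend the most effort, is the lower bound: getting from the easy bound $n+1$ coming from Lemma~\ref{lrblower} up to $\binom n2 +n+1$ requires genuinely using that effective (not just faithful) representations of $F_n$ must see the full ``order information'' on pairs of letters, and packaging this as forced submodules via Bergman's Lemma. The delicate points will be (i) identifying, for each pair $\{a,b\}\subseteq A$, a left ideal $L$ of $\Bbbk F_n$ with simple socle spanned by a difference of two injective words that any effective module must separate (the natural candidate: words that agree except for swapping an adjacent $a,b$, chosen so that the swap is ``undone'' by no shorter word, making the socle simple); (ii) checking that as $\{a,b\}$ ranges over all pairs, these forced submodules, together with the forced trivial summand and the $n$ forced one-dimensional summands from $\Lambda(F_n)$, are independent enough to sum in dimension to at least $\binom n2+n+1$; equivalently, exhibiting one left ideal of exactly this dimension whose socle is a sum of such separating differences. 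Once the bookkeeping of which projective indecomposables (described above via the left Schützenberger representations) are forced is done, the count $\binom n2+n+1$ should match the dimension of $\Bbbk F_n/(\text{top radical layer beyond length }2)$ or similar, and the two bounds will close.
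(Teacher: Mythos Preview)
Your upper bound is the same embedding the paper uses, but you make it harder than necessary. Rather than splitting into $\{+,-,0\}^{A}\times\{+,-,0\}^{\binom{A}{2}}$ and then wrestling with an off-by-one, simply view $A\cup\binom{A}{2}$ as a single index set of size $n+\binom{n}{2}$ and embed $F_n\hookrightarrow\{+,-,0\}^{A\cup\binom{A}{2}}$ in one go; Proposition~\ref{signseq} then yields an effective module of dimension exactly $\binom{n}{2}+n+1$ with no gluing of base points required.

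For the lower bound you have the right picture---the trivial module, the $n$ simples indexed by $(n-1)$-subsets, and the $\binom{n}{2}$ simples indexed by $(n-2)$-subsets are all forced---but your proposed mechanism via Bergman's Lemma is not what the paper does and would be harder to execute. Bergman's Lemma needs a left ideal with \emph{simple socle}, and then forces it as a \emph{submodule}; you would then have to argue that the $\binom{n}{2}$ forced submodules are independent in $V$, which is exactly the ``delicate point (ii)'' you flag and do not resolve. The paper sidesteps both issues with a projectivity argument instead: for $u$ of length $n-2$ with missing letters $a,b$, one checks directly that $x\mapsto xab-xba$ gives an isomorphism $\Bbbk\mathcal{L}_u\cong\Bbbk F_n(uab-uba)$, so this cyclic left ideal is the \emph{projective cover} of the simple at $\mathcal{L}_u$. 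Effectiveness gives $\alpha\in V$ with $(uab-uba)\alpha\ne 0$, hence a nonzero homomorphism from this projective cover into $V$, which forces the simple at $\mathcal{L}_u$ as a \emph{composition factor} of $V$. Since the simples are one-dimensional and pairwise non-isomorphic, counting $1+n+\binom{n}{2}$ distinct composition factors immediately gives $\dim V\ge\binom{n}{2}+n+1$ with no independence bookkeeping at all. The moral: use the simple-top (projective) side rather than the simple-socle (Bergman) side, and count composition factors rather than submodules.
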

\begin{proof}
To obtain the lower bound, we first observe by Lemma~\ref{lrblower} that any effective module $V$ contains the trivial module as a composition factor.  Let  $A=\{1,\ldots,n\}$ be the free generating set.  The join irreducible elements of the dual of the power set of $A$ ordered by reverse inclusion are the $(n-1)$-element subsets.  So the simple modules corresponding to these $\mathcal{L}$-classes must appear as composition factors in $V$ by Lemma~\ref{lrblower} and by Corollary~\ref{latticecase} (and its proof).

Next we claim that each simple module associated to an $\mathcal{L}$-class corresponding to an $(n-2)$-element subset must be a composition factor of $V$.  There are $\binom{n}{2}$ such simple modules.  Let $u\in F_{n}$ have length $n-2$ and let $a,b$ be the two letters not belonging to $u$.  Then it is straightforward to verify that $x\mapsto xab-xba=xuab-xuba$, for $x\in \mathcal{L}_{u}$, provides an isomorphism of the projective indecomposable module $\Bbbk \mathcal{L}_{e}$ with $\Bbbk F_{n}(uab-uba)$.  Since $V$ is effective, there exists $\alpha\in V$ such that $(uab-uba)\alpha\ne 0$.  Then $x\mapsto x\alpha$ provides a non-zero homomorphism from $\Bbbk F_{n}(uab-uba)$ to $V$.  Since $\Bbbk F_{n}(uab-uba)$ is a projective cover of the simple module associated to $\mathcal{L}_{u}$, this simple module is a composition factor of $V$.  This yields the lower bound.

For the upper bound, it suffices by Proposition~\ref{signseq} to show that $F_{n}$ embeds in $\{+,-,0\}^{\binom{n}{2}+n+1}$.  It is classical that $F_{n}$ embeds in a product of copies of $n$, see~\cite[Proposition 7.3.2]{RS}; we are simply doing the bookkeeping.   Let $B$ be the set of $2$-element subsets of $A$.  Then to each $a\in A$ we associate a function $f_{a}\colon A\cup B\to \{+,-,0\}$ by defining, for $j<k$ from $A$,
\begin{align*}
f_{a}(j)& = \begin{cases}+, & \text{if}\ a= j; \\ 0, & \text{if}\ a\ne j\end{cases}\\
f_{a}(\{j,k\}) &=  \begin{cases}0, & \text{if}\ a\notin \{j,k\}; \\ +, & \text{if}\ a= j;\\ -, &\text{if}\ a=k. \end{cases}
\end{align*}
The map $a\mapsto f_{a}$ extends to a homomorphism $\varphi\colon F_{n}\to \{+,-,0\}^{A\cup B}$ which we claim is injective.  Clearly, $\varphi$ separates words with different support.  If $u,v$ have the same support, then there exist $i<j$ such that $i,j$ are in the support of $u,v$ and appear in a different order in $u,v$.  Thus their images are different in the $\{i,j\}$-coordinate.  This completes the proof.
\end{proof}

We do not know the effective dimension of the free band.

\begin{question}
What is the effective dimension of the free band?
\end{question}

\subsection{The symmetric group $\mathcal{S}_n$}\label{s7.2}

As we have seen above, the classical transformation semigroups $\mathcal{T}_n$, $\mathcal{PT}_n$
and $\mathcal{IS}_n$ all have effective dimension $n$. All these semigroups are natural generalizations of
the symmetric group $\mathcal{S}_n$ and for completeness we recall here the effective dimension of the latter.  The result in characteristic zero is due to Burnside (see~\cite[Chapter~19, \S 8, Theorem~22]{BZ} for a proof) and in positive characteristic to Dickson~\cite{D}.

\begin{theorem}\label{thmsymmgroup}
Assume that the characteristic of $\Bbbk$ does not divide $n$.
Then $\effdim_{\Bbbk}(\mathcal{S}_n)=n-1$.   If $n\ge 5$ and the characteristic of $\Bbbk$ divides $n$, then $\effdim_{\Bbbk}(\mathcal{S}_{n})=n-2$.
\end{theorem}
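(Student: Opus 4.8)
The plan is to prove the two claims separately, treating effective dimension of $\mathcal{S}_n$ as a purely group-theoretic question about minimal faithful degree, and reducing in each case to the classical representation theory of symmetric groups. First recall that, since $\mathcal{S}_n$ is a group, any effective representation over $\Bbbk$ is automatically faithful as a $\Bbbk \mathcal{S}_n$-module in the sense that it separates group elements, and conversely a faithful module for the group is effective; so we are computing the classical minimal faithful degree. Our two main inputs are: (i) when $\mathrm{char}\,\Bbbk\nmid n$ the group algebra $\Bbbk\mathcal{S}_n$ is semisimple, and the composition factors of a faithful module must include enough simples to separate elements, together with the fact that the standard $(n-1)$-dimensional representation (the non-trivial constituent of the natural permutation module on $\{1,\dots,n\}$) is already faithful; (ii) when $\mathrm{char}\,\Bbbk\mid n$ and $n\ge 5$, the natural permutation module is no longer semisimple, but the standard module has a trivial submodule and the quotient is an irreducible faithful module of dimension $n-2$, which is a classical fact going back to Dickson.

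For the first statement (characteristic coprime to $n$), I would argue as follows. The upper bound $\effdim_{\Bbbk}(\mathcal{S}_n)\le n-1$ is immediate: the natural permutation module $\Bbbk^n$ contains the trivial submodule spanned by $(1,1,\dots,1)$, and since $\mathrm{char}\,\Bbbk\nmid n$ it splits off, leaving the standard module of dimension $n-1$ on which $\mathcal{S}_n$ acts faithfully (a non-identity permutation moves some coordinate, hence acts non-trivially on the complement of the all-ones vector). For the lower bound, suppose $V$ is faithful of dimension $d$. Decompose $V$ into simples; since $V$ is faithful, the product of the corresponding primitive central idempotents is zero, equivalently the simples appearing generate all of $\Bbbk\mathcal{S}_n$ as a two-sided ideal, so in particular $V$ must contain a non-linear simple constituent (the only one-dimensional representations of $\mathcal{S}_n$ are the trivial and sign characters, whose kernels are $\mathcal{S}_n$ and $\mathcal{A}_n$ respectively, and even together these do not separate elements of $\mathcal{A}_n$ for $n\ge 3$). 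The smallest-dimensional non-linear simple module of $\mathcal{S}_n$ in characteristic coprime to $n$ is the standard module, of dimension $n-1$; this is the classical minimal-degree result of Burnside cited in the excerpt. Hence $d\ge n-1$. The small cases $n\le 4$ can be checked by hand, or the statement interpreted with the standard-module bound still holding.

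For the second statement, assume $n\ge 5$ and $\mathrm{char}\,\Bbbk = p \mid n$. For the upper bound $\effdim_{\Bbbk}(\mathcal{S}_n)\le n-2$: in the natural permutation module $\Bbbk^n$ the all-ones vector now lies in the augmentation-zero subspace (since $n\equiv 0$), so we have a chain $0 \subset \langle(1,\dots,1)\rangle \subset V_0 \subset \Bbbk^n$ where $V_0$ is the augmentation submodule, and the Specht-module quotient $D := V_0/\langle(1,\dots,1)\rangle$ has dimension $n-2$. One checks $D$ is still faithful for $n\ge 5$: its kernel is a normal subgroup of $\mathcal{S}_n$, so is $1$, $\mathcal{A}_n$, or $\mathcal{S}_n$; the $3$-cycle $(1\,2\,3)$ acts non-trivially (it does not fix the image of $e_1-e_2$ modulo the all-ones vector), ruling out $\mathcal{A}_n$ and $\mathcal{S}_n$. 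Hence $\effdim_{\Bbbk}(\mathcal{S}_n)\le \dim D = n-2$. For the lower bound, one again argues that a faithful $V$ must contain a composition factor that is not annihilated by $\mathcal{A}_n$ and moreover that the linear characters together with $D$-avoidance cannot produce faithfulness; the content here is precisely Dickson's theorem that $D$ is the smallest-dimensional faithful $\Bbbk\mathcal{S}_n$-module when $p\mid n$, which we invoke. Combining the two bounds gives $\effdim_{\Bbbk}(\mathcal{S}_n)=n-2$.

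The main obstacle is the lower bound in the modular case: unlike the semisimple case, one cannot simply read off a "smallest non-linear simple" since $\Bbbk\mathcal{S}_n$ is not semisimple and faithfulness of a module is not determined by its composition factors alone (a faithful module need not be a direct sum of its composition factors). So the honest lower-bound argument genuinely relies on the classification of low-dimensional modular irreducibles of $\mathcal{S}_n$ and on controlling how non-semisimple faithful modules can be built; rather than reproving this, the cleanest route is to cite Dickson's result \cite{D} (as the excerpt already does) for the modular lower bound and Burnside's result \cite{BZ} for the ordinary one, and to supply only the short faithfulness and upper-bound verifications above.
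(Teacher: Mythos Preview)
The paper does not actually supply a proof of this theorem: it states the result and attributes it entirely to Burnside (characteristic zero) and Dickson (positive characteristic), with references. Your proposal is therefore more detailed than the paper itself, and your overall strategy---construct the obvious faithful module for the upper bound and cite the classical minimal-degree theorems for the lower bound---is exactly in the spirit of what the paper does.

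One correction is needed, however. You assert that $\Bbbk\mathcal{S}_n$ is semisimple whenever $\mathrm{char}\,\Bbbk\nmid n$, and then use this to ``decompose $V$ into simples.'' This is false: Maschke's theorem requires $\mathrm{char}\,\Bbbk\nmid n!$, not merely $\mathrm{char}\,\Bbbk\nmid n$. For example, with $n=5$ and $\mathrm{char}\,\Bbbk=2$ the hypothesis of the theorem is satisfied but $\Bbbk\mathcal{S}_5$ is not semisimple. Your upper-bound argument survives unchanged (the splitting $\Bbbk^n=\langle(1,\dots,1)\rangle\oplus\{\text{sum-zero}\}$ only needs $n$ to be invertible), but your sketch of the lower bound in the first case is not valid as written when the characteristic is positive and coprime to $n$. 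Since you ultimately defer to the cited references anyway, the fix is simply to drop the semisimplicity claim and note that Dickson's paper handles all positive characteristics, not only those dividing $n$, while Burnside covers characteristic zero---which is precisely the division of labor the paper itself indicates.
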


\subsection{Cyclic semigroups}\label{s7.3}

For $m,n\in\mathbb{N}$, $m\leq n$, denote by $C_{m,n}$
the cyclic semigroup with presentation $\langle
x:x^{n+1}=x^m\rangle$.

\begin{proposition}\label{prop7301}
Let $\Bbbk$ be a field containing a primitive $(n-m+1)^{st}$ root of unity.  Then:
\begin{displaymath}
\effdim_{\Bbbk}(C_{m,n}) =\min(m+1,n).
\end{displaymath}
\end{proposition}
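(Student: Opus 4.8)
Write $d=n-m+1$ for the period of $C_{m,n}$, so that $C_{m,n}=\{x,x^{2},\dots,x^{n}\}$ has $n$ distinct elements, with ``tail'' $x,\dots,x^{m-1}$ and kernel the cyclic group $\{x^{m},\dots,x^{n}\}$ of order $d$. By the universal property of the presentation, a representation of $C_{m,n}$ amounts to a single matrix $A=\varphi(x)$ with $A^{n+1}=A^{m}$, and it is effective precisely when $A,A^{2},\dots,A^{n}$ are pairwise distinct, i.e.\ when the monogenic semigroup $\langle A\rangle$ generated by $A$ is isomorphic to $C_{m,n}$ (hence has index $m$ and period $d$). Thus $\effdim_{\Bbbk}(C_{m,n})$ is the least size of such an $A$. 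The case $m=1$ is degenerate: $C_{1,n}\cong\mathbb{Z}/n\mathbb{Z}$ is the cyclic group, covered by the corollary on abelian groups; so I assume $m\ge 2$ (so that $C_{m,n}$ is not a monoid) and prove matching bounds $\min(m+1,n)$, which equals $m$ if $n=m$ and $m+1$ if $n>m$.

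\emph{Upper bound.} I would write down an explicit minimal $A$. If $n=m$, take $A=J_{m}$, the nilpotent Jordan block of size $m$: then $J_{m}^{\,m}=0=J_{m}^{\,m+1}$ while $J_{m},\dots,J_{m}^{\,m-1}$ are distinct and nonzero, so $\langle A\rangle\cong C_{m,m}$ and $\effdim_{\Bbbk}(C_{m,m})\le m$ (alternatively, this bound is immediate from Theorem~\ref{geneffdim}). If $n>m$, pick a primitive $d$-th root of unity $\zeta\in\Bbbk$ (the only place the hypothesis on $\Bbbk$ is used) and set $A=J_{m}\oplus(\zeta)$, of size $m+1$. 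Since $\zeta^{d}=1$ we get $A^{n+1}=0\oplus\zeta^{\,n+1}=0\oplus\zeta^{\,m}=A^{m}$; moreover $A^{k}=J_{m}^{\,k}\oplus\zeta^{\,k}$ has nonzero Jordan part, pairwise distinct, for $1\le k\le m-1$, while for $m\le k\le n$ one has $A^{k}=0\oplus\zeta^{\,k}$ with $k$ ranging over $d$ consecutive exponents, hence pairwise distinct since $\zeta$ has order $d$; and the two ranges are disjoint because exactly one has a nonzero Jordan part. Thus $\langle A\rangle\cong C_{m,n}$ and $\effdim_{\Bbbk}(C_{m,n})\le m+1$.

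\emph{Lower bound.} Let $V$ be an effective module and $A=\varphi(x)$. Use the Fitting decomposition $V=V_{0}\oplus V_{1}$ with $V_{0}=\bigcup_{k}\ker A^{k}$ and $V_{1}=\bigcap_{k}\operatorname{im}A^{k}$; both are $A$-invariant, $A|_{V_{0}}$ is nilpotent of some nilpotency index $a$, and $A|_{V_{1}}$ is invertible. Restricting $A^{m+d}=A^{m}$ to $V_{1}$ and cancelling the invertible factor gives $(A|_{V_{1}})^{d}=\mathrm{id}$, so $A|_{V_{1}}$ has a finite multiplicative order $d'$ dividing $d$. For $k\ge a$ one has $A^{k}=0\oplus(A|_{V_{1}})^{k}$, depending only on $k\bmod d'$; for $1\le k<a$ the component $(A|_{V_{0}})^{k}$ is nonzero; and on a maximal Jordan block of $A|_{V_{0}}$, which has size $a$, the powers up to exponent $a-1$ are pairwise distinct. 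A short argument from these facts shows $\langle A\rangle$ has index $\max(a,1)$ and period $d'$. Effectiveness forces $\langle A\rangle\cong C_{m,n}$, hence $\max(a,1)=m$ and $d'=d$; as $m\ge 2$ this gives $a=m$, whence $(A|_{V_{0}})^{m-1}\ne 0$ and $\dim V_{0}\ge m$. If moreover $n>m$, then $d=d'\ge 2$, so $A|_{V_{1}}$ has order $\ge 2$ and $V_{1}\ne 0$, giving $\dim V_{1}\ge 1$. Therefore $\dim V\ge m$ when $n=m$ and $\dim V\ge m+1$ when $n>m$, i.e.\ $\dim V\ge\min(m+1,n)$ in all cases.

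The main obstacle is the bookkeeping in the lower bound: one must carefully read off the index and period of the monogenic semigroup $\langle A\rangle$ from the Jordan structure of $A$ — in particular verifying that distinct powers of a maximal nilpotent Jordan block remain distinct as operators on all of $V$, so that the index is exactly $\max(a,1)$, and then using $\langle A\rangle\cong C_{m,n}$ to pin down $a=m$ and $d'=d$. Everything else (the Fitting decomposition, the cancellation on $V_{1}$, and the two explicit representations) is routine.
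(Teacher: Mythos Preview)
Your proof is correct and follows essentially the same route as the paper's: your Fitting decomposition $V=V_{0}\oplus V_{1}$ coincides with the paper's splitting $V=(1-e)V\oplus eV$ via the unique idempotent $e$ of $C_{m,n}$ for $n>m$ (indeed $\varphi(e)$ is precisely the projection onto the stable image $V_{1}$), and your conditions $a=m$, $d'=d$ amount exactly to the paper's observation that $(1-e)V$ and $eV$ are effective modules for $C_{m,n}/C_{m,n}e\cong C_{m,m}$ and $C_{m,n}e\cong\mathbb{Z}/d\mathbb{Z}$, respectively. The upper-bound construction $J_{m}\oplus(\zeta)$ is identical to the paper's.
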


\begin{proof}
The estimate
$\effdim_{\Bbbk}(C_{m,m})\geq m$
follows from Corollary~\ref{cornilp}, while
$\effdim_{\Bbbk}(C_{m,m})\leq m$
follows from Theorem~\ref{geneffdim}.

If $n>m$, then $C_{m,n}$ contains a unique idempotent
$e$, which is of course central. Moreover, $C_{m,n}e\cong \mathbb{Z}/(n-m+1)\mathbb{Z}$ and $C_{m,n}/C_{m,n}e\cong C_{m,m}$.  Let $V$ be an effective module for $C_{m,n}$, which we can view as a module for $C_{m,n}^{1}$.  Then $V=eV\oplus (1-e)V$ and, moreover, $eV$ and $(1-e)V$ are effective modules for $C_{m,n}e$ and $C_{m,n}/C_{m,n}e$, respectively.  Hence $\dim (1-e)V\ge m$ by the previous
paragraph and $\dim eV\ge 1$.
Thus $\effdim_{\Bbbk}(C_{m,n})\geq m+1$.

On the other hand, the representation by
$(m+1)\times (m+1)$ matrices assigning
$x$ the direct sum of the nilpotent Jordan block of
size $m$ with the matrix $(\xi)$, where $\xi$ is a primitive $(n-m+1)^{st}$ root of unity, is effective.
The claim of the proposition follows.
\end{proof}

\section{The table of effective dimensions over $\mathbb{C}$}\label{s9}

We conclude the paper with a table providing the
effective dimension over $\mathbb{C}$ of several
classical families of semigroups.

\begin{center}
\begin{tabular}{|l||c|}
\hline
Semigroup $S$ & $\effdim_{\mathbb{C}}(S)$\\
\hline\hline
Symmetric group $\mathcal{S}_n$ & $n-1$\\
\hline
Full transformation semigroup $\mathcal{T}_n$ & $n$\\
\hline
Full semigroup $\mathcal{PT}_n$ of partial transformations  & $n$\\
\hline
Symmetric inverse semigroup $\mathcal{IS}_n$ & $n$\\
\hline
Full matrix semigroup $\mathrm{Mat}_{n\times n}(\mathbb{F}_q)$ & $(q^n-1)/(q-1)$\\
\hline
Free nilpotent semigroup $N_{m,n}$ of index $n$ & $n$\\
\hline
Nontrivial left- or right zero semigroup & $2$\\
\hline
Rectangular band $R_{m,n}$, $m,n>1$ & $3$\\
\hline
Semigroup $B_n$ of binary relations & $2^n-1$\\
\hline
Path semigroup of an acyclic quiver on $n$ vertices & $n$\\
\hline
Nilpotent cyclic semigroup $C_{m,m}$ & $m$\\
\hline
Cyclic semigroup $C_{m,n}$, $n>m$ & $m+1$\\
\hline
Free left regular band $F_n$ on $n$ generators & $\binom{n}{2}+n+1$\\
\hline
\end{tabular}
\end{center}
\vspace{2mm}

\vspace{0.2cm}

\noindent
V.M.: Department of Mathematics, Uppsala University, Box 480,
SE-75106, Uppsala, SWEDEN, e-mail: {\tt mazor\symbol{64}math.uu.se},\\
web: ``http://www.math.uu.se/$\sim$mazor/''
\vspace{0.2cm}

\noindent
B.S.: Department of Mathematics; City College of New York,
Convent Avenue at 138th Street, New York, New York 10031, United States,\\
e-mail: {\tt bsteinberg\symbol{64}ccny.cuny.edu}\\
web: ``http://www.sci.ccny.cuny.edu/$\sim$benjamin/''

\end{document}